\documentclass[10pt,oneside,a4paper]{article}
\setlength{\textwidth}{15cm}
\setlength{\oddsidemargin}{0.7cm}
\setlength{\topmargin}{0cm}

\usepackage{titlesec}
	\titleformat{\subsection}[runin]{\normalfont\bfseries}{\thesubsection.}{.5em}{}[. ~]
		\titlespacing{\subsection}{0pt}{1.5ex plus .1ex minus .2ex}{0pt}
\usepackage{graphicx}
\usepackage{subcaption}
\usepackage{wrapfig}
\usepackage{amsfonts,amssymb,amsmath,amsthm}
\usepackage{enumerate}
\usepackage{url}
\usepackage{mathrsfs}
\usepackage[dvipsnames,hyperref]{xcolor}
\usepackage{hyperref}
\usepackage{tikz-cd}
\hypersetup{colorlinks=true,
citecolor=OliveGreen,
linkcolor=Purple,
urlcolor=RedOrange}
\usepackage{mathtools} 
\usepackage[normalem]{ulem} 
\DeclareFontEncoding{LS1}{}{}
\DeclareFontSubstitution{LS1}{stix}{m}{n}
\DeclareSymbolFont{symbols2}{LS1}{stixfrak}{m}{n}

\DeclareMathSymbol{\lcurvyangle}{\mathopen}{symbols2}{"E9}
\DeclareMathSymbol{\rcurvyangle}{\mathclose}{symbols2}{"EA}

\DeclarePairedDelimiter{\fgeo}{\wr}{\wr}
\DeclarePairedDelimiter{\geo}{\lcurvyangle}{\rcurvyangle}

\usepackage[mono=false]{libertine}
\usepackage{libertinust1math}
\usepackage[T1]{fontenc}

\newtheoremstyle{thmstyle}
{\topsep}
{\topsep}
{\itshape}
{0pt}
{\bfseries}
{.}
{5pt plus 1pt minus 1pt}
{#2.\hspace{3pt}#1\thmnote{~\textnormal{(#3)}}}

\newtheoremstyle{defistyle}
{\topsep}
{\topsep}
{}
{0pt}
{\bfseries}
{.}
{5pt plus 1pt minus 1pt}
{}

\theoremstyle{thmstyle}
\newtheorem{thm}[subsection]{Theorem}
\newtheorem{lemma}[subsection]{Lemma}
\newtheorem{prop}[subsection]{Proposition}
\newtheorem{cor}[subsection]{Corollary}

\theoremstyle{defistyle}
\newtheorem{defi}[subsection]{Definition}
\newtheorem*{rmk}{Remark}

\setcounter{secnumdepth}{2}

\newcommand{\HH}{\mathbb{H}}
\newcommand{\CC}{\mathbb{C}}
\newcommand{\RR}{\mathbb{R}}
\newcommand{\KK}{\mathbb{K}}

\usepackage{float}

\newcommand{\trace}{\mathop{\mathrm{tr}}}
\newcommand{\tance}{\mathop{\mathrm{ta}}}

\newcommand{\SU}{\mathop{\mathrm{SU}}}
\newcommand{\PU}{\mathop{\mathrm{PU}}}
\newcommand{\BV}{{\mathrm B}\,V}

\newcommand{\SV}{{\mathrm S}\,V}
\newcommand{\EV}{{\mathrm E}\,V}
\newcommand{\real}{\mathop{\mathrm{Re}}}
\newcommand{\imag}{\mathop{\mathrm{Im}}}
\newcommand{\PV}{\mathbb{P}V}
\newcommand{\dist}{\mathop{\mathrm{dist}}}

\newcommand{\HC}{\mathbb{H}^2_{\mathbb{C}}}
\newcommand{\CHC}{\overline{\mathbb{H}^2_{\mathbb{C}}}}
\newcommand{\Id}{\mathop{\mathrm{Id}}}
\renewcommand{\triangle}{\vartriangle\!\!}

\begin{document}
\title{A non-trivial family of trivial bundles with complex hyperbolic structure}
\author{Hugo C. Bot\'os\footnote{Supported by grant 2023/07381-1, São Paulo Research Foundation (FAPESP)} \and Felipe A. Franco}
\date{}
\maketitle

\begin{abstract}
In $ \PU(2,1)$, the group of holomorphic isometries of the complex hyperbolic plane, we study the space of involutions $R_1, R_2, R_3, R_4, R_5$ satisfying $R_5R_4R_3R_2R_1=1$, where $R_1$ is a reflection in a complex geodesic and the other $R_i$'s are reflections in points of the complex hyperbolic plane. We show that this space modulo $\PU(2,1)$-conjugation is bending-connected and has dimension~$4$. Using this, we construct a $4$-dimensional bending-connected family of
complex hyperbolic structures on a disc orbibundle with vanishing Euler number over the sphere with $5$ cone points of angle $\pi$. Bending-connectedness here means that we can naturally deform the geometric structure, like Dehn twists in Teichmüller theory. Additionally, finding complex hyperbolic disc orbibundles with vanishing Euler numbers is a hard problem, originally conjectured by W. Goldman and Y. Eliashberg and solved by S. Anan'in and N. Gusevskii, and we produce a simpler and more straightforward construction for them.
\end{abstract}

\section{Introduction}
{Determining whether a given manifold has some geometric structure is a fundamental question in uniformization theory, a central field in differential geometry. A manifold $M$ admits a structure modeled over a space $E$ if there exists a discrete subgroup $\Gamma$ of
$\mathrm{Isom}(E)$ such that $M=E/\Gamma$. So, being able to construct
discrete subgroups of the isometry group of a model space is key to understanding the geometry of a manifold.

For geometries modeled by the Poincar\'e disc in classical Teichm\"uller theory, 
the focus is on representations of surface groups in 
$\PU(1,1)\simeq\mathrm{PSL}(2,\RR)$, the isometry group of the 
Poincar\'e disc, modulo conjugations. If~$\Sigma$ is a closed oriented 
surface (or, more generally, a $2$-orbifold) with negative Euler 
characteristic, its {\it $\PU(1,1)$-character variety\/} is  
$\hom(\pi_1(\Sigma),\PU(1,1))/\PU(1,1)$, and the discrete and faithful
representations in this character variety form an open subset that is the 
{\it Teichm\"uller space\/} of~$\Sigma$. Thus, complex structures 
on~$\Sigma$ can be seen as points of this character variety.

A natural generalization is to consider as a model the 
{\it complex hyperbolic $n$-space\/} $\HH_\CC^n$, which can be viewed as a unit open 
ball in $\CC^n$ endowed with its group of biholomorphisms, the group~$\PU(n,1)$. Additionally, $\HH_\CC^n$ has a natural Kähler structure with $\PU(n,1)$ as the group of holomorphic isometries. The space $\HH_\CC^1$ coincides with the Poincaré disc. See Section \ref{subsec:hyperbolic-spaces}.

In this work, we study complex hyperbolic disc {\it orbibundles\/}, that is,
disc orbibundles over a $2$-orbifold~$\Sigma$ for which the total space admits 
a geometric structure modeled over~$\mathbb H_{\mathbb C}^2$. 
In other words, we study discrete
and faithful representations lying in the {\it $\PU(2,1)$-character variety\/} of~$\Sigma$, that is, discrete and faithful representations in $\hom(\pi_1(\Sigma),\PU(2,1))/\PU(2,1)$. Moreover, we are interested 
in examples of  disc orbibundles where the complex hyperbolic structure is 
non-rigid, meaning it can  be continuously deformed, similar to what happens 
to complex structures on surfaces 
in Teichm\"uller theory. 

\smallskip

Let us digress about orbibundles before discussing the core subject of this work.

Consider a cocompact Fuchsian group $\Gamma$. A {\it good disc orbibundle} over the good $2$-orbifold $\HH_\CC^1/\Gamma$ is a map $(\HH_\CC^1 \times \mathbb D)/\Gamma \to \HH_\CC^1/\Gamma$ given by $[x,f] \mapsto [x]$, where $\mathbb D$ is an open two-dimensional disc
and the action of $\Gamma$ on $\HH_\CC^1 \times \mathbb D$ is of the form
$g(x,f) = (gx,a(g,x)f)$, where $a(g,-)\colon\HH_\CC^1 \times \mathbb D \to \mathbb D$ 
is smooth map with $a(h,gx)a(g,x)f=a(hg,x)f$ and $a(1,x)f=f$, for
any $g,h\in\Gamma$, $x\in\HH_\CC^1$, and $f\in\mathbb D$.
Observe that the $2$-orbifold $\Sigma\coloneq\HH_\CC^1/\Gamma$ and the 
$4$-orbifold $L\coloneq(\HH_\CC^1 \times \mathbb D)/\Gamma$, the {\it base space} and
the {\it total space\/} respectively, have the 
same fundamental group $\Gamma$. In this paper, we only consider 
good orbifolds and good orbibundles.

Whenever $\Gamma$ is a surface group, that is, whenever~$\Gamma$ is 
torsion-free, the described quotients are manifolds and we obtain 
common disc bundles. Nevertheless, surface groups are notoriously 
complicated to work with from a numerical/combinatorial viewpoint, 
since they have several generators and relations.
In other words, we use orbifold groups to construct disc orbibundles.
Since, by Selberg's Lemma (See \cite[Section 7.6]{ratcliffe}), it is always possible to find a finite-index torsion-free subgroup
of any co-compact Fuchsian group, if we have a disc orbibundle, we also have a disc bundle over a surface, although sometimes
it is hard to determine exactly which surface.

An oriented disc orbibundle 
$\zeta:(\mathbb H_{\mathbb C}^1\times\mathbb D)/\pi_1(\Sigma)
\to\Sigma$ over a $2$-orbifold~$\Sigma$ has two basic 
invariants: the Euler characteristic~$\chi$ of~$\Sigma$ and the 
Euler number~$e$ of the disc orbibundle~$\zeta$. 
These invariants can be computed via Chern-Weil theory (integral of 
the Euler class) or an adaptation of the Poincar\'e-Hopf 
(see~\cite{orbigoodles}). We follow the second approach (see 
Subsection~\ref{defi:euler number}). If we think of~$\Sigma$ as a 
section of~$\zeta$, then we can define~$\chi$ as the Euler number 
of the tangent orbibundle~$T\Sigma$ and~$e$ as the Euler number of the 
normal orbibundle~$N\Sigma$.

We say that such a disc orbibundle has a {\it complex hyperbolic structure\/}
when the total space $L$ is diffeomorphic to a quotient of 
$\HH_\CC^2$ by a discrete subgroup of $\PU(2,1)$, meaning that there 
exists a discrete and faithful representation $\varrho:\pi_1(\Sigma)\to\PU(2,1)$ 
such that $L\simeq\HH_\CC^2/\varrho(\pi_1(\Sigma))$. Thus, to each complex hyperbolic 
structure on the disc orbibundles~$\zeta$, we have a 
$\PU(2,1)$-representation of~$\Gamma$. Furthermore, equivalent complex hyperbolic 
structures correspond to the same representations $\Gamma \to \PU(2,1)$ 
modulo conjugation by~$\PU(2,1)$. In conclusion, the space of complex 
hyperbolic structures on~$L$ is naturally embedded in  
the $\PU(2,1)$-character variety of~$\Gamma$.
When $L \to \Sigma$ is complex hyperbolic disc orbibundle we have a third discrete invariant called {\it Toledo invariant\/}, defined by 
$\tau\coloneq \frac{4}{2\pi}\int_\Sigma \omega$, where we think of $\Sigma$ as a section of the complex hyperbolic orbibundle and $\omega$ is the symplectic form arising from the complex hyperbolic structure (see Section~\ref{sec:toledo-invariant}).

\smallskip

One of the simplest orbifold groups is, for $n\geq 5$, the {\it hyperelliptic group\/}
$$H_n\coloneq \langle r_1,\ldots,r_n \mid r_i^2=r_n\ldots r_1 = 1\rangle,$$
which is the orbifold fundamental group of the $2$-sphere with~$n$ cone points 
of angle~$\pi$, and is the type of orbifold we will use as basis for our 
orbibundles. These spheres, denoted $\Sigma_{n,2}$, admit hyperbolic structure, 
meaning that $H_n$ can be seen as Fuchsian groups (see Section \ref{sec:hyper}). 
The $\PU(2,1)$-representations of these groups are relatively simple
to work with since given a representation $\varrho:H_n\to\PU(2,1)$
we have that $\varrho(r_i)$ is either the identity or a holomorphic
involution. Moreover, it is quite simple to determine the finite index
torsion-free subgroup of~$H_n$ (see Subsection~\ref{subsec:pu11-repr}).
For a detailed study of $\PU(1,1)$-representations
of~$H_n$ see~\cite{Sasha2012,basic}.

\smallskip

Holomorphic involutions of $\HC$ are either reflections in points or
reflections in complex geodesics (holomorphically embedded Poincar\'e
discs). 
Moreover, each such reflection has a single representative
in~$R\in\SU(2,1)$ satisfying $R^2=1$ (see Subsection~\ref{subsec:isometries}). 
Hence, for each representation $\varrho:H_n\to\PU(2,1)$,
satisfying $\varrho(r_i)\neq 1$ for every~$i$, there
is a unique corresponding relation $R_nR_{n-1}\ldots R_1=\delta$
in~$\SU(2,1)$, where~$\delta$ is a cube root of unity and each $R_i$
is a reflection of one of the described types.
An important aspect that manifests is that, while considering representations
$\varrho:H_n\to\PU(2,1)$, $\varrho(r_i)\neq 1$, or equivalently relations
$R_nR_{n-1}\ldots R_1=\delta$, the choice of the type of the reflection~$R_i$ for each $i$ is a consequential one, affecting the existence of such a relation, 
the discreteness of the representation, 
and the topology of the associated orbibundles. 

The existing examples of 
discrete and faithful representations
$\varrho:H_n\to\PU(2,1)$ are of two kinds: 
\begin{enumerate}[{\bf (1)}]
\item Examples where all 
$\varrho(r_i)$ are reflections in complex lines. Several of these 
representations where
constructed in~\cite{AGG2011}, all having relative
Euler number $e/\chi\in(0,0.5)$;
\item Examples where
all $\varrho(r_i)$ are reflections in points. These appear 
in~\cite{turnover} where, advancing on the techniques of~\cite{AGG2011}, they 
constructed an example of a representation for $n=5$ where $e/\chi=-1$, and 
examples for $n=8,12$ where $e/\chi=0$.
\end{enumerate} 

The relative Euler numbers $0$ and $-1$ mean that these disc orbibundles give rise to complex hyperbolic trivial bundles and cotangent bundles when considering a finite index torsion-free subgroups of $H_n$.

\smallskip

On the works \cite{AGG2011} and \cite{turnover}, the examples were actually discovered as discrete and faithful representations $G_{n_1,n_2,n_3} \to \PU(2,1)$ where
$$G_{n_1,n_2,n_3}\coloneq \langle g_1,g_2,g_3\colon g_1^{n_1}=g_2^{n_2}=g_3^{n_3}=1,\, g_3g_2g_1=1\rangle$$
is the {\it turnover group\/} (also known as {\it von Dyck group\/}),
which is the orbifold fundamental group of the sphere with three cone points with angles $\frac{2\pi}{n_1}$, $\frac{2\pi}{n_2}$ and $\frac{2\pi}{n_3}$. when $-1+\frac1{n_1}+\frac1{n_2}+\frac1{n_3}<0$, this $2$-orbifold admits hyperbolic structure and $G_{n_1,n_2,n_3}$ can be seen as a Fuchsian group (See Section \cite[Section~3.1]{turnover}). For the particular group $G_{n}\coloneq G_{n,2,n}$ with $n\geq 5$, we obtain $H_n$ as a finite index subgroup of $G_n$ by taking $r_i = g_3^{i-1}g_2(g_3^{-1})^{i-1}$. Therefore, we have the orbifold cover $\HH_\CC^1 / H_n \to \HH_\CC^1 / G_n$
and the orbibundle pullback
\begin{equation*}
\begin{tikzcd}
	{\mathbb H_{\mathbb C}^2/H_n} && {\mathbb H_{\mathbb C}^2/G_n} \\
	{\mathbb H_{\mathbb C}^1/H_n} && {\mathbb H_{\mathbb C}^1/G_n}
	\arrow[from=1-1, to=1-3]
	\arrow[from=1-1, to=2-1]
	\arrow[from=2-1, to=2-3]
	\arrow[from=1-3, to=2-3]
\end{tikzcd}    
\end{equation*}

In the preprint \cite{SashaGusevskii2007}, Sasha Anan'in and Nikolay Gusevskii constructed one trivial disc bundle over a genus $2$ closed Riemann surface with complex hyperbolic structure, providing an example of trivial $\mathbb S^1$-bundles with spherical $\mathrm{CR}$-structure (conjectured by Bill Goldman \cite[p.~583]{Goldman1983}) and holomorphically filled (conjectured by Yakov Eliashberg \cite[Open questions 8.1.1']{Eliashberg1992}). See \cite[Section 1.3]{Schwartz2007} for more on $\mathrm{CR}$-structure.

Inspired by the examples mentioned above, we investigate a simple procedure to 
construct families of trivial disc orbibundles with complex hyperbolic 
structures. We do this by mixing techniques from~\cite{Sasha2012,spell}, used to 
in the explicit obtain representations of the hyperelliptic group and to 
parameterize the complex hyperbolic structures via bendings, 
and from~\cite{AGG2011,turnover}, used to build the fundamental domain and 
to compute basic invariants 
(Euler number and Toledo invariant) from the orbifold viewpoint. The idea of 
non-rigid complex hyperbolic structures on disc bundles can be traced back 
to~\cite{Gaye2008}.

More precisely, we investigate disc orbibundles with vanishing Euler numbers over 
$\Sigma_{5,2}$ for which the total space admits complex hyperbolic structure, as 
defined above. Additionally, we study the natural deformation of these complex 
hyperbolic structures via a technique called {\it bending\/} 
(see Subsection~\ref{subsec:bendings}), which transforms the associated
relation $R_nR_{n-1}\ldots R_1=\delta$ by changing a product
$R_{i+1}R_i$ into a new one $R_{i+1}'R_i'$ in a way that 
$R_{i+1}R_i=R_{i+1}'R_i'$. Such a procedure is similar 
to simple earthquakes in Teichm\"uller theory (see \cite[Definition 3.5]{SashaBento2009}). 

The main results of this paper are Corollary~\ref{cor:dim-discrete} and Theorem~\ref{thm:bend-quad}
together with the computational test of Section~\ref{sec:computational}. To summarize, we have 
the following:

\medskip

\noindent{\bf Summary of Main Results.}
{\it There exists an open\/ $4$-dimensional bending-connected subset of\/ 
$$\{\varrho \in\hom(H_5,\PU(2,1))\mid \varrho(r_i) \neq \Id\}/\PU(2,1)$$
consisting of discrete and faithful representations where\/ $\varrho(r_1)$
is a reflection in a complex geodesic and\/ $\varrho(r_i)$, with\/~$i\neq 1$, is
a reflection in a point. Each representation in this subset defines a trivial disc
orbibundle over\/ $\Sigma_{5,2}$ with\/ $e=0$ and\/ $\tau/\chi=2/3$.}

\medskip

The strategy for producing these representations is the following: we first construct representations $H_5 \to \PU(2,1)$ as outlined in Section \ref{sec:decom-iso}. We do this by finding three holomorphic involutions $R_1,R_2,R_3 \in \SU(2,1)$ such that there exists involutions $R_4,R_5 \in  \SU(2,1)$ satisfying $R_5R_4R_3R_2R_1 = \exp(\frac{-2\pi i}{3})$, where $R_1$ is the only reflection in a complex geodesic. Then, following Section \ref{sec:discreteness}, we construct a potential fundamental domain for the group generated by the involutions $R_i$ using a special kind of hypersurface called bisector as its boundary. Under algebraic conditions imposed over these bisectors, we guarantee that the fundamental region tessellates $\HH_\CC^2$. Via bending, we eventually find the involutions $R_i$ for which the tessellation occurs. Once the fundamental domain is established, we make use of the tools developed in~\cite{orbigoodles} and~\cite{turnover} to foliate the fundamental domain by discs, giving rise to disc orbibundles, and computing the invariants~$\chi$,~$e$, and~$\tau$. The disc orbibundles over $\Sigma_{5,2}$ we construct have Euler number $0$ and Toledo invariant $\tau = \frac23 \chi$.

An explicit example is constructed in detail in Section \ref{sec:computational}. There we present several techniques to create examples and compute invariants in practice. In Section \ref{subsec:comp-bend}, we show how bending can be used to navigate the space of complex hyperbolic disc bundles over $\Sigma_{5,2}$. It is worth mentioning that, to the authors' knowledge, these are the first examples of complex hyperbolic disc orbibundles arising from representations where the $R_i$ are holomorphic involutions of mixed types, with $R_1$ being reflection in a complex geodesic and the other $R_i$ being reflections in points. As previously said, for the examples from \cite{AGG2011} and \cite{turnover}, all involutions $R_i$ have the same type.

In Proposition \ref{prop:dim-rep-su21}, we also provide an coordinate-free proof for  \cite[Proposition 40]{Gaye2008}, showing that representations $\varrho: H_n \to \PU(2,1)$ with $\varrho(r_j) \neq \Id$ form a $4n-8$ dimensional space, assuming it is non-empty. If we consider representations modulo $\PU(2,1)$ instead, the dimension is $4n-16$. From this result, we conclude that each family of complex hyperbolic disc orbibundles constructed from $H_5$ is $4$-dimensional.

Finally, $\{\varrho \in\hom(H_n,\PU(2,1))\colon R_j \neq \Id\}/\PU(2,1)$ is divided into three natural components determined by the identity $R_nR_{n-1}\cdots R_1 = \delta$ in $\SU(2,1)$, where $\delta$ can be $1$, $\exp(\frac{2\pi i}{3}),$ or $\exp(-\frac{2\pi i}{3})$. For $n=5$ with $R_1$ as a reflection in a complex geodesic and the other $R_j$'s as reflections in points, we show that the component given by $\delta =1$ is empty and the other two are non-empty (see Corollary \ref{cor:dim-pdelta}). Furthermore, the two non-empty components are bending-connected (see Proposition \ref{prop:bending-connected}). Thus, we know that each complex hyperbolic disc orbibundle over $\Sigma_{5,2}$ found in this work is in a bending connected $4$-dimensional region of the $\PU(2,1)$ character variety of $H_5$  and has an open neighborhood formed of complex hyperbolic disc orbibundles over $\Sigma_{5,2}$ which are pairwise isomorphic as smooth orbibundles but distinct from the holomorphic viewpoint.

\section{Preliminaries}

We produce complex hyperbolic disc orbibundles over $2$-orbifolds via tessellations in the complex hyperbolic plane. We use bisector segments to form the boundary for fundamental domains in the complex hyperbolic plane, similar to how one uses geodesic polygons to produce tessellations in the Poincaré disc. Thus, this section presents the complex hyperbolic plane, bisectors, and holomorphic isometries. 

\subsection{Hyperbolic spaces}
\label{subsec:hyperbolic-spaces}
Consider a field $\KK = \RR$ or $\CC$. Let $V$ be an $(n+1)$-dimensional $\KK$-linear space equipped with a 
Hermitian form $\langle -,- \rangle$ of signature $-+\cdots+$. The pro{\-}to{\-}typi{\-}cal example here is the $\KK$-linear space $\KK^{n+1}$ endowed with the canonical Hermitian form of signature $-+\cdots+$ given by
$$\langle x,y \rangle \coloneq  -x_0\overline{y_0}+\sum_{k=1}^n x_k \overline{y_k},$$
for $x=(x_0,x_1,\ldots,x_n)$ and $y=(y_0,y_1,\ldots, y_n)$ in~$\KK^{n+1}$.

The $n$-dimensional $\KK$-projective space
$\mathbb P_{\KK}V$ is divided in {\it negative\/}, {\it positive\/},
and {\it iso{\-}tro{\-}pic\/} points, respectively:
\begin{equation*}
\begin{gathered}
\BV\coloneq \{p\in\mathbb P_{\mathbb K}V\mid\langle p,p\rangle<0\},\quad 
\EV\coloneq \{p\in\mathbb P_{\mathbb K}V\mid\langle p,p\rangle>0\},\\
\SV\coloneq \{p\in\mathbb P_{\mathbb K}V\mid\langle p,p\rangle=0\}.
\end{gathered}
\end{equation*}
When dealing with signature $-+\cdots+$, the space $\BV$ is always a {\bf B}all (as we see below) and $\SV$ is a {\bf S}phere. The $\EV$ stands for {\bf E}lsewhere.

Indeed, for $V=\KK^{n+1}$ with the canonical Hermitian form, we have 
\begin{equation*}
\begin{gathered}
\BV=\{x \in \mathbb P_{\KK}^n \mid \langle x,x \rangle <0\} = \big\{[1:x_1:\cdots:x_n] \in \mathbb P_{\KK}^n \mid  |x_1|^2+\cdots +|x_n|^2 <1\big\},\\
\SV=\{x \in \mathbb P_{\KK}^n \mid \langle x,x \rangle =0\} = \big\{[1:x_1:\cdots:x_n] \in \mathbb P_{\KK}^n \mid |x_1|^2+\cdots +|x_n|^2 =1\big\}.
\end{gathered}
\end{equation*}
Thus, $\BV$ is a ball and $\SV$ is its boundary, a sphere.

\begin{rmk}
We use the same notation for a point in $\mathbb P_{\KK}V$ and a representative of it in~$V$. 
Also, we write~$\PV$ instead of~$\mathbb P_{\KK} V$ in the absence of ambiguity.
\end{rmk}

Given a nonisotropic point $p\in\PV$ we have the identification 
$T_{p}\PV={\mathrm{Lin}}_{\KK}(\KK p,p^\perp)$, where $p^\perp$ denotes
the space of vectors that are orthogonal to $p$. Such identification provides
a Hermitian metric in both $\BV$ and $\EV$ defined by
$$\langle t_1,t_2\rangle \coloneq  -\frac{\langle t_1(p),t_2(p)\rangle}{\langle p,p\rangle},$$
where $t_1,t_2\in \mathrm{Lin}_{\KK}(\KK p,p^\perp)$ are tangent 
vectors to~$\PV$ 
at~$p$. This Hermitian metric is positive in $\BV$ and of signature $-+\cdots+$ in $\EV$.

The ball $\BV$ equipped with the Hermitian metric $\langle -, - \rangle$ is called {\it $\KK$-hyperbolic $n$-space\/} and it is denoted by $\HH_\KK^n$, while its boundary~$\SV$ is 
denoted~$\partial \HH_\KK^n$. We also denote~$\overline{\HH_\KK^n} \coloneq  \HH_\KK^n \sqcup \partial \HH_\KK^n$.

When $\KK=\RR$, $g\coloneq \langle -, - \rangle$ defines a Riemannian metric on the real 
hyperbolic space $\HH_\RR^n$ that is complete with constant sectional curvature $-1$. 
The space $\EV$ equipped with~$g$ is a complete Lorentz manifold with sectional curvature~$-1$. 

For $\KK=\CC$, the real part of the Hermitian metric provides a Riemannian metric 
$g\coloneq \real\langle-,-\rangle$ on the complex hyperbolic space $\HH_\CC^n$ and the imaginary part, 
$\omega\coloneq \imag\langle-,-\rangle$, a symplectic form. 

The complex hyperbolic line $\HH_\CC^1$ has constant curvature $-4$. Thus, up to scaling the metric, $\HH_\CC^1$ and~$\HH_\RR^2$ are isometric. For $n>1$, the sectional curvature of $\HH_\CC^n$ varies on the interval $[-4,-1]$.

\begin{rmk}
The complex hyperbolic line $\HH_\CC^1$, unlike $\HH_\RR^2$, is complex and biholomorphic to the standard Poincaré disc. On the other hand, the real hyperbolic plane $\HH_\RR^2$ is naturally isometric to the Beltrami-Klein model. In short, we think of $\HH_\CC^1$ as a Poincaré disc and of $\HH_\RR^2$ as a Beltrami-Klein model.
\end{rmk}

We now present the hyperbolic space as a metric space.

For two nonisotropic points $p_1,p_2\in\PV$, we define the {\it tance\/} between
$p_1,p_2$ by
$$\tance(p_1,p_2)\coloneq \frac{\langle p_1,p_2\rangle\langle p_2,p_1\rangle}{
\langle p_1,p_1\rangle\langle p_2,p_2\rangle}.$$
The tance appears in several of the formulas of real and complex hyperbolic geometry, 
attaining different meanings accordingly with the configuration of $p_1,p_2$. 
The distance between points $p_1,p_2\in\mathbb H_{\mathbb K}^n$ is given by
$$\dist(p_1,p_2)=\mathrm{arccosh}\,\sqrt{\tance(p_1,p_2)},$$
thus we may think of the tance as an algebraic version of the distance.

\subsection{The complex hyperbolic plane}
\label{subsec:complex-plane}

From now on, we will restrict ourselves to the {\it complex hyperbolic plane} $\HH_\CC^2$, meaning that $\dim_\CC V = 3$. Nevertheless, most of the subsequent properties of hyperbolic spaces are valid with little to no change for general settings.

As a manifold, $\HH_\CC^2$ is a ball with real dimension $4$ and its boundary $\partial \HH_\CC^2$ is a $3$-sphere. The space of positive points $\EV$ is a pseudo-Riemannian $4$-manifold.

Given a projective line $L$ in $\PV$, there exists a unique point~$c\in\PV$ 
such that $L=\mathbb Pc^\perp$. This point is the {\it polar point\/} of the 
line~$L$.
We say that $L$ is {\it hyperbolic\/}, {\it Euclidean\/}, {\it spherical\/} if
$c\in\BV$, $c\in\SV$, $c\in\EV$, respectively. 
When~$L$ is a hyperbolic line, both $L\cap\HC$ and $L\cap\EV$ are copies of the Poincar\'e
disc~$\mathbb{H}_{\mathbb C}^1$ embedded
as Riemann surfaces.
The intersection of a hyperbolic
projective line with\/ $\HC$ is named {\it complex geodesic}. 
Additionally, observe that $\EV$ is the space of all complex geodesics. 

\begin{rmk}\label{rmk:tance and line}
Depending on the context, we also refer to $L\cap\CHC$
as a complex geodesic when $L$ is hyperbolic.
\end{rmk}

For two distinct points $p_1,p_2$ of $\EV$, the tance $\tance(p_1,p_2)$ dictates which type of projective line $L\coloneq  
\mathbb P(\mathbb C p_1 + \mathbb C p_2)$ we have in hands:
\begin{itemize}
\item if $\tance(p_1,p_2)<1$, then $L$ is spherical and
$\arccos\sqrt{\tance(p_1,p_2)}$ is the distance between the 
points $p_1,p_2$ of the round sphere $L$;
\item if $\tance(p_1,p_2)>1$, then $L$ is hyperbolic and 
$\mathrm{arccosh}\,\sqrt{\tance(p_1,p_2)}$ is the distance between the 
points $p_1,p_2$ of the Poincar\'e disc $L \cap \EV$;
\item if $\tance(p_1,p_2)=1$, then $L$ is Euclidean.
\end{itemize}

Now we discuss the relative position of two hyperbolic lines. Given two 
hyperbolic lines $L_1$ and $L_2$, we say that they are: {\it ultraparallel\/} 
if they do not intersect in~$\CHC$; {\it asymptotic\/} if they intersect 
in $\partial\HC$; and {\it concurrent\/} if they intersect in $\HC$.

\begin{lemma}
\label{lemma:trianglelines}
{\rm\cite[p.\,100]{Goldman1999}}~Let\/ $L_1,L_2$ be hyperbolic lines and let\/ 
$p_1,p_2\in\EV$ be their polar points,
respectively. Then\/ $L_1$ and $L_2$ are ultraparallel, asymptotic, concurrent
if and only if\/ $\tance(p_1,p_2)>1$, $\tance(p_1,p_2)=1$, $\tance(p_1,p_2)<1$, respectively.
In the case they are concurrent, the angle\/ $\angle(L_1,L_2)$ between\/~$L_1$ 
and\/~$L_2$ is\/ $\arccos\sqrt{\tance(p_1,p_2)}$ and, in the case they are 
ultraparallel, the distance\/ $\mathrm{dist}(L_1,L_2)$ between\/ $L_1$ and\/ $L_2$ 
is $\mathrm{arccosh}\,\sqrt{\tance(p_1,p_2)}$.
\end{lemma}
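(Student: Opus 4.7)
The strategy is to analyze the intersection $q \coloneq L_1 \cap L_2$ in $\mathbb{P}V$ via polarity. Since $\mathbb{P}V$ is a projective plane and $L_1 \neq L_2$ (otherwise there is nothing to prove), $q$ is a single point; being orthogonal to both $p_1$ and $p_2$, it is the polar point of the line $M \coloneq \mathbb{P}(\mathbb{C} p_1 + \mathbb{C} p_2)$. The three cases of the lemma correspond tautologically to $q \in \BV$, $q \in \SV$, $q \in \EV$ respectively. On the other hand, $M$ passes through the two positive points $p_1, p_2$, so by the tance trichotomy already recorded in the excerpt, $M$ is spherical, Euclidean, or hyperbolic according as $\tance(p_1, p_2)$ is $<1$, $=1$, or $>1$; and the type of $M$ coincides with the type of its polar $q$. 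Matching the two trichotomies proves the first half of the lemma.

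For the concurrent case, I plan to normalize $\langle q, q \rangle = -1$, pick an orthonormal basis $v_1, v_2$ of $q^\perp$ with $p_1 = v_1$, and write $p_2 = a v_1 + b v_2$, so that $\tance(p_1, p_2) = |a|^2/(|a|^2 + |b|^2)$. Under the identification $T_q \mathbb{P}V = \mathrm{Lin}(\mathbb{C} q, q^\perp)$, the tangent to $L_i$ at $q$ corresponds to $p_i^\perp \cap q^\perp$; a direct calculation gives $T_q L_1 = \mathbb{C} v_2$ and $T_q L_2 = \mathbb{C}(\bar b v_1 - \bar a v_2)$, and the Hermitian angle formula then yields $\cos^2 \theta = |a|^2/(|a|^2 + |b|^2) = \tance(p_1, p_2)$.

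For the ultraparallel case, the line $M$ is itself hyperbolic, and I would introduce the intersection points $m_i \coloneq M \cap L_i$. A basis computation gives $\langle m_i, m_i \rangle = -\langle p_i, p_i \rangle < 0$, so $m_i \in \HC$; and the tangent lines $T_{m_i} M$ and $T_{m_i} L_i$ are spanned respectively by the vectors $p_i$ and $q$, which are orthogonal in $V$. Thus $M \cap \CHC$ is the common perpendicular complex geodesic to $L_1$ and $L_2$, and $\dist(L_1, L_2) = \dist(m_1, m_2)$. The main obstacle — essentially the only computation with non-trivial content — is then to verify the identity $\tance(m_1, m_2) = \tance(p_1, p_2)$, equivalently that the polar involution $p \mapsto M \cap \mathbb{P} p^\perp$ within the hyperbolic line $M$ preserves tance. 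This reduces to a short linear-algebra calculation in the signature $-+$ plane $q^\perp$, after which the standard formula $\dist = \mathrm{arccosh}\sqrt{\tance}$ closes the proof.
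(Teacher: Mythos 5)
The paper gives no proof of this lemma at all --- it is imported verbatim from Goldman's book --- so there is no internal argument to compare yours against; what you propose is a genuine, self-contained derivation from the material of Section 2, and it is correct. The identification of $L_1\cap L_2$ with the polar point $q$ of $M=\mathbb P(\CC p_1+\CC p_2)$ does reduce the trichotomy to the tance trichotomy for lines through two positive points, and both normal-form computations check out: in the concurrent case the Hermitian angle formula gives $\cos^2\theta=|a|^2/(|a|^2+|b|^2)=\tance(p_1,p_2)$, and in the ultraparallel case, normalizing $\langle p_i,p_i\rangle=1$ and writing $c:=\langle p_1,p_2\rangle$, one finds $\langle m_i,m_i\rangle=1-|c|^2$ and $\langle m_1,m_2\rangle=\bar c\,(|c|^2-1)$, whence $\tance(m_1,m_2)=|c|^2=\tance(p_1,p_2)$. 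Two small points deserve a sentence each. First, "the type of $M$ coincides with the type of its polar" must be taken in the direction hyperbolic $\leftrightarrow$ polar point in $\EV$ (that is when $q^\perp$ has signature $-+$); the displayed definition in Subsection~\ref{subsec:complex-plane} literally states the opposite pairing, which is a slip of the paper, and your matching of trichotomies silently relies on the correct one. Second, $\dist(L_1,L_2)=\dist(m_1,m_2)$ uses that the distance between ultraparallel complex geodesics is attained at the feet of the common perpendicular (e.g.\ because orthogonal projection onto a complex geodesic is distance-nonincreasing); this standard fact should be cited or justified rather than assumed.
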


As we have seen, hyperbolic projective lines give rise to copies of $\HH_\CC^1$ inside $\HH_\CC^2$, which we call complex geodesics. These discs have constant curvature $-4$ and are complex submanifolds.
If $W$ is a real $3$-dimensional subspace of~$V$ such that $\langle -,- \rangle|_{W \times W} $ is real valued and has signature of $-++$, then the surface
$$\mathbb P_{\CC} W  = \{ p \in \mathbb P_\CC V \mid p \in W\}$$
is naturally diffeomorphic to $\mathbb P_\RR W$
and $\mathbb P_{\CC}W \cap \HH_\CC^2$ is isometric to the real hyperbolic plane $\HH_\RR^2$. We say $\mathbb P_{\CC}W \cap \HH_\CC^2$ is a {\it real plane}.

Note that real planes are hyperbolic spaces of curvature $-1$ isometrically embedded in $\HH_\CC^2$ as Lagrangian submanifolds. In general, if we have an 
 orthonormal basis $b_1,b_2,b_3$ for $V$, then $W\coloneq \RR b_1+\RR b_2 + \RR b_3$ define a real plane. 
 
\begin{rmk} Depending on the context, we also refer to $\mathbb P_\CC W \cap\CHC$
as a real plane.
\end{rmk}

\subsection{Geodesics} Let $W$ be a real two-dimensional subspace of $V$ such
that the Hermitian form of $V$ restricted to $W$ is real-valued and 
has signature $-+$. The projection $\mathrm G\,W\coloneq \mathbb P_\CC W$ is 
called {\it extended geodesic\/} of $\HH_\CC^2$ and it is diffeomorphic to $\mathbb P_\RR W = \mathbb P_\RR^1$, a circle. A geodesic of\/~$\mathbb H_{\mathbb C}^2$ is an intersection $\mathrm G\,W\cap\BV$.
Given two distinct point $p_1,p_2\in\CHC$, the
extended geodesic through $p_1,p_2$, namely $\mathbb{P}(\mathbb Rp_1+
\mathbb R\langle p_1,p_2\rangle p_2)$, will be denoted $\mathrm G\fgeo{p_1,p_2}$.
The {\it geodesic\/} through $p_1$ and $p_2$, which we consider with endpoint included, 
will be denoted $\mathrm G\geo{p_1,p_2}$, that is $\mathrm G\geo{p_1,p_2}\coloneq 
\mathrm{G}\fgeo{p_1,p_2}\cap\CHC$. In the
case where $p_1,p_2\in\HC$, the oriented geodesic segment
from $p_1$ to $p_2$ will be denoted $\mathrm G[p_1,p_2]$. Note that every
extended geodesic $\mathrm G\,W$ has its complex line $\mathbb P(W+iW)$, which
is the only complex line containing~$\mathrm G\,W$. The complex
line of $\mathrm G\fgeo{p_1,p_2}$ is denoted $\mathrm L\fgeo{p_1,p_2}$.
Similarly, every (proper) geodesic of\/ $\CHC$ has its complex geodesic. We do not 
introduce a particular notation for this complex geodesic, but sometimes we refer 
to $\mathrm L\fgeo{p_1,p_2}$ as a complex geodesic, 
meaning that we are actually considering $\mathrm L\fgeo{p_1,p_2}\cap\CHC$. 

\begin{rmk}
Given $p \in \HH_\CC^2$ and $t \in T_p\HH_\CC^2$ with $\langle t,t \rangle =1$, the geodesic $\mathbb P(\RR p + \RR t(p))\cap \HH_\CC^2$ is the geodesic passing through $p$ and tangent to $t$. It can be parametrized by $\theta \mapsto \cosh(\theta)p+\sinh(\theta)t(p)$. 
\end{rmk}

\subsection{Bisectors}
\label{subsec:bisectors}
A {\it bisector\/} is a hypersurface defined as the 
equidistant locus of two points in $\mathbb H_{\mathbb C}^2$.

Alternatively, bisectors have the following algebraic description: consider a geodesic
$G\coloneq \mathbb{P}W\cap\CHC$ and let $L\coloneq \mathbb P(W+iW)$ be its complex line. Denote by 
$f$ its polar point, i.e., $W+iW = f^\perp$. 
The {\it bisector\/}~$B$ obtained from the geodesic~$G$ is union of all complex geodesics $\mathbb P(\CC f + \CC x)\cap \CHC$ with $x\in G$:
$$B\coloneq \bigsqcup\limits_{x\in G}\mathbb P(\mathbb C x + \mathbb C f)\cap\CHC.$$
A complex geodesic $\mathbb P(\mathbb C x + \mathbb C f)\cap\CHC$ is a {\it slice\/}, the 
complex geodesic $L\cap\CHC$ is the {\it complex spine\/}, and the geodesic 
$G$ is the {\it real spine\/} of the bisector $B$.
A bisector also has a {\it meridional decomposition\/}: fixing a representative~$f$ for the polar point 
of~$L$, we have
$$B=\bigcup_{\varepsilon\in\mathbb S^1}
\mathbb P(W+\mathbb R\varepsilon f)\cap\CHC.$$
A real plane
$\mathbb P(W+\mathbb R\varepsilon f)\cap\CHC$ is called a {\it meridian\/} of the bisector.
Two distinct meridians intersect exactly over~$G$.

A point $\xi \in B$ which is not a vertex of the real spine $G$ has associated with it a special curve called {\it meridional curve}. 
For $\xi \in G$, the meridional curve determined by $\xi$ is defined as $G$.
For $\xi \in B\setminus G$, there exists a unique meridian plane $R\coloneq \mathbb P(W+\RR\varepsilon f) \cap \CHC$ containing $\xi$ and the meridional curve passing through $\xi$ is the hypercycle parallel to $G$ passing through $\xi$ in the real plane $R$ (for isotropic points~$\xi \in \partial R$ the hypercycle is the segment of~$\partial R$ sharing vertices with~$G$ and containing~$\xi$). See Figure~\ref{fig:meridional-curve}.

\begin{figure}[htb!]
\centering
\includegraphics[scale=1]{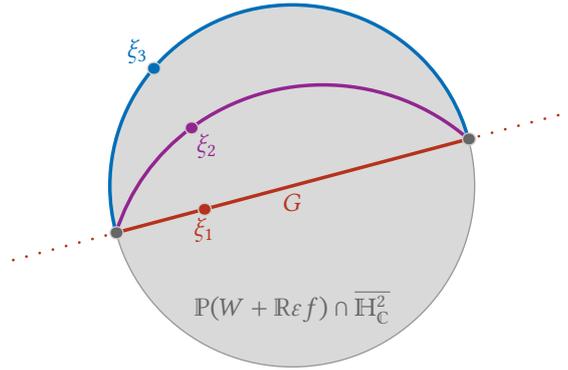}
\caption{A meridian $R\coloneq \mathbb P(W+\RR\varepsilon f) \cap \CHC$ with three meridional curves: the geodesic $G$ in {\color{BrickRed}red} is the meridional curve through $\xi_1$; the hypercycle of $G$ in {\color{Plum}purple} is the meridional curve through $\xi_2$; the segment of $\partial R$ in {\color{RoyalBlue}blue} containing $\xi_3$ is the meridional curve through $\xi_3$.}
\label{fig:meridional-curve}
\end{figure}

Two ultraparallel complex geodesics $C_1$ and $C_2$ determine a single bisector 
containing~$C_1$ and~$C_2$ 
as slices~\cite[Theorem~5.2.4]{Goldman1999}. Explicitly,
since $C_1$ and $C_2$ are ultraparallel, they intersect at a point $f\in\EV$.
If $c_i\in\HC$ is the intersection point of the line~$C_i$ 
and~$\mathbb P f^\perp$, $i=1,2$, and $G\coloneq \mathrm G\geo{c_1,c_2}$ we have
that the bisector $B$ with real spine~$G$ has~$C_1$ and~$C_2$ as slices. We define the
{\it oriented segment of bisector\/} starting with the slice~$C_1$ and ending
with the slice $C_2$ as
$$\mathrm{B}[C_1,C_2]\coloneq 
\bigsqcup_{x\in\mathrm G[c_1,c_2]}\mathbb P(\mathbb C x + \mathbb C f)\cap\CHC.$$
The full oriented bisector containing $\mathrm{B}[C_1,C_2]$ is
denoted~$\mathrm{B}\geo{C_1,C_2}$.
Similarly, the bisector ray $\mathrm B[C_1,C_2\rcurvyangle$ is defined from the geodesic ray $\mathrm G[c_1,c_2\rcurvyangle$, starting at $c_1$ and containing $c_2$.

\begin{rmk}
Another way of seeing the above construction: given two ultraparallel complex geodesics $C_1$ and $C_2$, there are $c_1\in C_1$ and $c_2 \in C_2$ such that $\dist(c_1,c_2) = \dist(C_1,C_2)$ and these points are uniquely determined. The bisector providing the bisector segment $\mathrm B[C_1,C_2]$ is the one with real spine $\mathrm G\geo{c_1,c_2}$.
\end{rmk}

Following~\cite[Section~A.1.8]{AGG2011}, for the geodesic $G$ in $\CHC$ given by $W\subset V$ we have that for two distinct points $p_1,p_2 \in \mathbb P W$ the bisector 
$B$ with real spine $G$ is given by the equation
\begin{equation} 
\label{eq:bisector}
    b(x,p_1,p_2)\coloneq \imag\frac{\langle p_1,x \rangle \langle x,p_2\rangle}{\langle p_1,p_2\rangle}=0, \quad x \in \CHC.
\end{equation}
Thus, as a hypersurface, $B$ separate $\HH_\CC^2$ into two topological balls given by
$$b(x,p_1,p_2)<0 \quad \text{and}\quad b(x,p_1,p_2)>0.$$

There is a natural non-vanishing vector field along $B$ given by
\begin{equation}\label{eq:bisec-normal}
n(x,p_1,p_2) \coloneq  - i\frac{\langle-,x \rangle }{\langle x,x \rangle }\left(\frac{\langle x,p_2\rangle}{\langle p_1,p_2\rangle}p_1-\frac{\langle x,p_1\rangle}{\langle p_2,p_1\rangle}p_2\right),
\end{equation}
as it is presented in \cite[Proposition A.21]{AGG2011}.

If two bisectors share a slice $C$ and intersect transversally along $C$, then the bisectors are transversal and their intersection is~$C$ (see~\cite[Lemma 2.13]{AGG2011}). Bisectors sharing a common slice are called cotranchal (see \cite[Section 9.1.1]{Goldman1999}).

Consider the positive points $p_1,p_2,p_3$ and the complex geodesics $C_j\coloneq \mathbb Pp_j^\perp$. Assume that $C_1,C_2$ and $C_2,C_3$ are ultraparallel, meaning that $\tance(p_1,p_2),\tance(p_2,p_3)>1$. Note that the bisectors $\mathrm B\geo{C_1,C_2}$ and 
$\mathrm B\geo{C_2,C_3}$ share $C_1$. By \cite[Criterion A.27]{AGG2011}, these bisectors are transversal if, and only if,
\begin{equation}
\label{eq:bisec-transv}
\left| \real\left(\frac{\langle p_1,p_3 \rangle \langle p_2,p_2 \rangle}{\langle p_1,p_2\rangle\langle p_2,p_3\rangle}\right) - 1 \right|< \sqrt{1-\frac{1}{\tance(p_1,p_2)}} \,\sqrt{1-\frac{1}{\tance(p_3,p_2)}}.
\end{equation}

\begin{rmk}In the situation where $p_1,p_2$ are positive, the bisector is determined by the slices $C_1,C_2$ with $C_j\coloneq \mathbb Pp_j^{\perp}\cap\HC$. 
The bisector $B\coloneq \mathrm B\geo{C_1,C_2}$ is oriented by following the orientation of the geodesic forming the real spine going from $C_1$ towards $C_2$. Since the slices are complex discs, $B$ is naturally oriented. More precisely, if $c_j$ is the intersection of $C_j$ with the real spine, then we can consider the vectors $t,t'$ tangent to $B$ at $c_1$ with $t$ tangent to the real spine pointing towards $c_2$ and $t' \in T_{c_1}C_1$. Thus, the vectors $t,t',it'$ provide the orientation of $T_{c_1}B$ and, consequentially, the orientation of $B$. Note that the vector $-it$ is normal to $B$ at $c_1$ and provides the orientation of $B$ because the ordered basis $-it,t,t',it'$ has the same orientation as $T_{c_1}\HH_\CC^2$. 

Moreover, $-it$ and $n(c_1,p_1,p_2)$, where $n$ is given by~\eqref{eq:bisec-normal}, are pointing towards the region defined by
$b(x,p_1,p_2)<0$. 
Indeed, up to choosing representatives, we might assume $\langle p_1,p_1 \rangle = \langle p_2,p_2 \rangle = 1$ and $r\coloneq \langle p_1,p_2 \rangle < 0$. Since $\tance(p_1,p_2)=r^2>1$, we must have $r<-1$. Also, $c_1=p_2-r p_1$ and $c_2=p_1-rp_2$. The curve $\gamma(\theta)\coloneq \cosh(\theta)c_1-\sinh(\theta) p_1 $ is pointing towards $c_2$ with velocity $t = -\frac{\langle -,c_1 \rangle}{\langle c_1,c_1 \rangle} p_1$ for $\theta = 0$. On the other hand $$n(c_1,p_1,p_2)= i\frac{\langle -,c_1 \rangle}{\langle c_1,c_1 \rangle}\frac{(1-r^2)}{r} p_1 = \frac{r^2-1}{r}it,$$
meaning that $-it$ and $n(c_1,p_1,p_2)$ are pointing towards the same direction. 
Hence, in the case where $p_1$ and $p_2$ are positive points, $n(x,p_1,p_2)$ provides the orientation of the oriented bisector 
$\mathrm B\geo{C_1,C_2}$. Note as well, that the curve $\theta\mapsto\cosh(\theta)c_1+i\sinh(\theta) p_1$ has velocity $-it$ for $\theta = 0$ and
$$b(\cosh(\theta)c_1+i\sinh(\theta) p_1,p_1,p_2) = \frac{\sinh(2\theta)(-1+r^2)}{r}<0 \quad \text{for}\quad  \theta > 0.$$
Thus the normal vector $n(x,p_1,p_2)$ is pointing towards the region
$b(x,p_1,p_2)<0$.
\end{rmk}


\subsection{Triangle of bisectors}
\label{subsec:triangle}
Three pairwise ultraparallel complex geodesics $C_1$, $C_2$, $C_3$ with polar points $q_1$, $q_2$, $q_3$, respectively, define a {\it triangle 
of bisectors} $$\triangle(C_1,C_2,C_3)\coloneq \mathrm{B}[C_1,C_2] \cup 
\mathrm{B}[C_2,C_3] \cup \mathrm{B}[C_3,C_1].$$ By Lemma~\ref{lemma:trianglelines}, the algebraic conditions 
guaranteeing that the complex geodesics are ultraparallel are 
$$\tance(q_1,q_2)>1,\quad \tance(q_2,q_3)>1,\quad \tance(q_3,q_1)>1.$$

The triangle $\triangle(C_1,C_2,C_3)$ is {\it transversal\/} if the extended bisectors intersect pairwise 
transversally. Algebraically (see \cite[Criterion~A.31]{AGG2011}), this is characterized by the equations
\begin{equation}
\label{eq:tranversalcond}
\begin{gathered}
\varepsilon_0^2 t_{12}^2 + t_{23}^2+t_{31}^2<1+2t_{12}t_{23}t_{31} \varepsilon_0,
\quad \varepsilon_0^2 t_{31}^2 + t_{12}^2+t_{23}^2<1+2t_{12}t_{23}t_{31} \varepsilon_0,\\
\varepsilon_0^2 t_{23}^2 + t_{31}^2+t_{12}^2<1+2t_{12}t_{23}t_{31} \varepsilon_0,
\end{gathered}
\end{equation}
where $t_{ij} \coloneq  \tance(q_i,q_j)$ and 
$$\varepsilon=\varepsilon_0+i \varepsilon_1\coloneq 
\frac{ \langle q_1,q_2 \rangle\langle q_2,q_3 \rangle\langle q_3,q_1 \rangle}
{\big|\langle q_1,q_2 \rangle\langle q_2,q_3 \rangle\langle q_3,q_1 \rangle\big|}.$$
The determinant of the Gram matrix for $q_1,q_2,q_3$ values 
$$\det [\langle q_i,q_j \rangle] = 1+2t_{12}t_{23}t_{31}\varepsilon_0 - t_{12}^2-t_{23}^2-t_{31}^2$$ and it must be negative, because the signature of $V$ is $-++$. Thus, from the inequalities imposing transversality to the sides of the triangle $\triangle(C_1,C_2,C_3)$, we conclude that $|\varepsilon_0| < 1 $. Therefore, $\varepsilon_1$ cannot be zero, dividing triangles of bisectors in those with $\varepsilon_1<0$ and those with $\varepsilon_1>0$.
Additionally, the parameter $\varepsilon_0$ is always positive for transversal triangles. Indeed, without loss of generality, we may assume $t_{12}\leq t_{23},t_{31}$ and, 
from the first inequality in~\eqref{eq:tranversalcond},
we conclude
$$\varepsilon_0>\frac{t_{23}^2 + t_{31}^2 - 1}{2t_{12}t_{23}t_{31}-\varepsilon_0 t_{12}^2}>0,$$
since $|\varepsilon|=1$ and $t_{ij}>1$ for $i\neq j$. All in all, we have
$0<\varepsilon_0<1$ and $\varepsilon_1 \in (-1,0)\cup (0,1)$.

\medskip
We now introduce the concept of orientation to a transversal triangle of bisectors.

The simplest types of oriented triangles of bisectors are those over a complex geodesic: if 
$\mathbb D=\mathbb P f^\perp \cap \HC$ is a complex geodesic and $\triangle(c_1,c_2,c_3)$ is a geodesic triangle 
in the Poincar\'e disc $\mathbb D$, where $c_1,c_2,c_3 \in \mathbb D$ are pairwise distinct, then the complex 
geodesics $$C_i \coloneq  \mathbb P(\mathbb C c_i+ \mathbb C f)\cap\HC$$ are ultraparallel and define a transversal
triangle of bisectors. Since $C_i$ is a complex geodesic, it has a polar point $q_i$. If we assume the points 
$c_1,c_2,c_3$ are arranged counterclockwise, the number~$\varepsilon$ is in this particular 
case $\exp(-2 i \,\mathrm{area}(\triangle(c_1,c_2,c_3)))$ and, as consequence,  $\varepsilon_1<0$. Otherwise, 
if the points $c_1,c_2,c_3$ are arranged in a clockwise cycle, then $\varepsilon_1>0$.

Thus, we say that a transversal triangle of bisector is {\it counterclockwise-oriented} when $\varepsilon_1<0$. The 
space of all transversal counterclockwise-oriented triangles of bisectors is path-connected 
(see \cite[Lemma~2.28]{AGG2011}). In this way, we may interpret~$\varepsilon_1<0$ as meaning 
that the triangle of  bisector is similar to the one over a complex line described above, 
constructed from a counterclockwise geodesic 
triangle. For more details, see \cite[Section~2.5]{AGG2011}.
\subsection{Isometries of the complex hyperbolic plane}
\label{subsec:isometries}
The elements of $\mathrm{GL}\,V$ that preserve the Hermitian form of $V$ 
and have determinant~$1$ form the the special unitary group $\SU(2,1)$. The projectivization
$\PU(2,1)\coloneq \SU(2,1)/\{1,\omega,\omega^2\}$, where $\omega\coloneq e^{2\pi i/3}$, is the
group of holomorphic isometries for $\mathbb H_{\mathbb C}^2$. 

\begin{rmk}
In general, $\PU(n,1)$ is the group of holomorphic isometries for $\HH_\CC^n$ and coincides with the group of biholomorphisms of the ball $\HH_\CC^n$.
\end{rmk}

We classify nonidentical isometries of $\mathbb H_{\mathbb C}^2$ as follows: an isometry 
$I\in\PU(2,1)$ is {\it elliptic\/}, {\it parabolic\/}, {\it loxodromic\/} if it,
respectively, fixes a point in $\HC$, fixes exactly one point in~$\partial\HC$, 
fixes exactly two
points in~$\partial\HC$. If $I$ is elliptic and pointwise fixes a projective line, 
we say that $I$ is {\it special elliptic}, otherwise, we say that $I$ is 
{\it regular elliptic\/}.
We use the same classification for nonidentical elements of~$\SU(2,1)$, which we also 
regard as the group of holomorphic isometries of~$\HC$.

Any nonidentical holomorphic involution of $\mathbb H_{\mathbb C}^2$ can be lifted to
$\SU(2,1)$ as an isometry of the form
$$I:x\mapsto 2\frac{\langle x,p\rangle}{\langle p,p\rangle}p-x,$$
for some point $p\in\PV\setminus\SV$. We call $p$ the {\it center\/} of $I$ and denote 
such involution by $R^p$. The involution~$R^p$ is the reflection in~$p$ 
if~$p\in\HC$ and it is the reflection in the complex
geodesic $\mathbb Pp^\perp\cap\HC$ if~$p\in\EV$. Whenever we write $R^p$, it is assumed that $\langle p,p \rangle \neq 0$.

The map $f:\mathbb C\to\mathbb R$ given by
$$f(z)=|z|^4-8\real(z^3)+18|z|^2-27$$
is quite useful to study the conjugacy classes of elements of $\SU(2,1)$. 
We define $$\Delta^\circ\coloneq \{z\in\mathbb C\mid f(z)<0\}, \quad \partial\Delta\coloneq \{z\in\mathbb C\mid f(z)=0\} \quad \text{and} \quad\Delta\coloneq \Delta^\circ\cup\partial\Delta.$$
By~\cite[Theorem~6.2.4]{Goldman1999}
a nonidentical isometry $I\in\SU(2,1)$ is regular elliptic iff $\trace I\in\Delta^\circ$, and it is
loxodromic iff $\trace I\in\mathbb C\setminus\Delta$. The set $\partial\Delta$ is a deltoid
parameterized by $\xi^{-2}+2\xi$ with $\xi\in\mathbb S^1$ and it is known as {\it Goldman's deltoid\/} (see Figure \ref{fig:ell1delt}). Its points correspond to the traces of special elliptic and parabolic isometries.
See~\cite[Section~2.3]{Will2016} and~\cite[Section~2.1]{spell} for details.

\subsection{Holonomy on a triangle of bisectors}
\label{subsec:holonomy}

A transversal triangle of bisector $\triangle(C_1,C_2,C_3)$ bounds a closed ball $T$ 
in~$\CHC$. \begin{rmk}We denote~$\partial_\infty T \coloneq  T \cap \partial \HH_\CC^2$. Observe that $\partial T = \triangle(C_1,C_2,C_3) \cup \partial_\infty T$, which is different of $\partial_\infty T$. Furthermore, $\partial_\infty T$ is a closed solid torus bounded by the torus~$\partial\!\! \triangle(C_1,C_2,C_3)= \triangle(C_1,C_2,C_3)\cap\partial \HH_\CC^2$.\end{rmk}  

\smallskip

Suppose $\triangle(C_1,C_2,C_3)$ is a transversal 
and counterclockwise oriented triangle of bisectors, as defined in Subsection~\ref{subsec:triangle}. Given $\xi_1 \in C_1 \cup \partial C_1$, we consider the meridional curve segment for the bisector $B[C_1,C_2]$ starting at $\xi_1$ and ending at $\xi_2 \in C_2 \cap \partial C_2$. Similarly, we consider the meridional curve segment starting at~$\xi_2$ and ending at $\xi_3 \in C_3 \cup \partial C_3$ on the bisector $B[C_2,C_3]$. One last time, we consider the meridional curve segment starting at~$\xi_3$ and ending at $\xi_4 \in C_1 \cup \partial C_1$ on the bisector $B[C_3,C_1]$.
The obtained map $\xi_1 \mapsto \xi_4$ defines an automorphism of $C_1 \cap \partial C_1$ which we refer to as the {\it holonomy of the triangle $\triangle(C_1,C_2,C_3)$} and denote by~$I$.

Following Subsection~\ref{subsec:isometries}, we make use of reflections to write down $I$. 
If, for $i=1,2,3$, $M_i$ is the middle slice of $B[C_i,C_{i+1}]$ and $m_i$ is its polar point, then
$$\xi_2 = R^{m_1}\xi_1, \quad \xi_3 = R^{m_2}\xi_2, \quad \xi_4 = R^{m_3}\xi_3.$$
Therefore, the holonomy $I$ is the restriction of $R^{m_3}R^{m_2}R^{m_1}$ to $C_1 \cup \partial C_1$.

The important thing about the holonomy is the following: there is always $\xi_1 \in \partial C_1$ such that $I$ moves~$\xi_1$ in the clockwise direction, meaning that when we go from $\xi_1$ to $I^2\xi_1$ in the clockwise direction, we pass through $I\xi_1$. 

So, take a point~$\xi_1\in\partial C_1$ that is moved in the clockwise direction by~$I$. We denote 
 by $[\xi_1,\xi_2] \cup [\xi_2,\xi_3] \cup [\xi_3,\xi_4]$ the concatenation of the three oriented meridional curve segments $[\xi_i,\xi_{i+1}]$ and join it to the circle segment going from $\xi_4$ to $\xi_1$ in the counterclockwise direction, thus giving rise to a closed curve in~$\partial\!\!\triangle(C_1,C_2,C_3)$. This curve is important because it is contractible in $\partial_\infty T$ and, as consequence, it is used when compute Euler numbers. For details, see \cite[Section 2.5.1]{AGG2011}.

\section{Representations of the hyperelliptic group}
\label{sec:hyper}

A {\it hyperelliptic surface} $\Sigma$ is a compact Riemann surface endowed with 
a {\it hyperelliptic involution\/}, that is, a holomorphic involution 
$\iota:\Sigma\to\Sigma$ fixing exactly~$n=2g+2$ points,
where~$g$ is the genus of~$\Sigma$.
By~\cite{Maclachlan1971}, considering the group
\begin{equation}
\label{eq:hypergroup}
H_n\coloneq \langle r_1,\ldots, r_n \mid r_i^2=r_n\ldots r_1=1\rangle,
\end{equation}
we have that the extension of the fundamental group
$\pi_1(\Sigma)$ by $\iota$ is the group~$H_{2g+2}$.
For $n\geq 5$, we call the group $H_n$ defined in~\eqref{eq:hypergroup} the {\it hyperelliptic group\/}.

\subsection{{\rm PU(1,1)}-Representations
of the hyperelliptic group}
\label{subsec:pu11-repr}
Consider a convex polygon in $\HH_\CC^1$ with vertices
$v_1,v_2,\ldots,v_n$. Assume that the sum of the inner angles of 
the polygon is~$2\pi$. Let~$p_i$ denote the middle point of the
segment $\mathrm G[v_i,v_{i+1}]$ and let $R^{p_i}$ denote the
reflection at~$p_i$, indices 
modulo~$n$. Poincar\'e's polygon theorem guarantees that the
given convex polygon with the isometries $R^{p_i}$ tessellates 
$\HH_\CC^1$ and the isometries $R^{p_i}$ satisfy
$$(R^{p_i})^2=R^{p_n}\ldots R^{p_1}=1.$$
It follows that the representation $\varrho:H_n\to\PU(1,1)$ of 
the hyperelliptic group defined by
$\varrho(r_i)=R^{p_i}$ is discrete and faithful and that the 
$2$-orbifold $\HH_\CC^1/H_n$ is topologically a sphere
with~$n$ cone points of angle~$\pi$. We denote this orbifold 
by $\mathbb S^2(2,2,\ldots,2)$, where the number of $2$'s within parenthesis
is~$n$. The Euler characteristic of the orbifold 
$\HH_\CC^1/H_n$ is 
$$\chi(\HH_\CC^1/H_n) = \chi(\mathbb S^2) + \sum_{i=1}^n \left(-1+\frac12\right) = 2-\frac{n}2.$$
See \cite[Section 13.3]{thurston2002} and \cite[Theorem 21]{orbigoodles}.
\smallskip

For $n$ even, the group $G_n$ generated by~$r_1 r_i$, where
$i=2,\ldots,n$, is a surface group and has index~$2$ as subgroup of $H_n$. In fact, if we denote by~$P$ the convex polygon we used as the fundamental domain for $H_n$, then $P \cup r_1 P$ is 
the fundamental domain for $G_n$ and $\HH_\CC^1/G_n$ is a Riemann surface. 
Since $\HH_\CC^1/G_n \to \HH_\CC^1/H_n$ is an orbifold cover of degree two, we obtain $\chi(\HH_\CC^1/G_n) = 2 \chi(\HH_\CC^1/H_n) = 4-n$ and, as consequence, the genus of the mentioned surface is $g=(n-2)/2$. See \cite[Proposition 13.3.4]{thurston2002} and \cite[Theorem~25]{orbigoodles} for details.
Additionally, the isometry $r_1$ is an involution of 
$\HH_\CC^1/G_n$ fixing exactly~$n=2g+2$ points, namely the 
ones corresponding to~$p_1,\ldots,p_n$. 
Therefore, $\HH_\CC^1/G_n$ is a hyperelliptic surface.

\smallskip

Now, for $n$ odd, the group $H_n$ has an explicit surface subgroup 
$T_n$ of index~$4$ generated by
$$r_2r_1r_n,\quad  r_2r_nr_1, \quad r_2r_1r_2r_1, \quad r_1r_i,\quad r_2r_1r_ir_2,$$  
where $i =3,\ldots,n-1$, and with fundamental domain 
$P \cup r_1 P \cup r_2 P \cup r_2r_1 P$. See \cite[Section 2.1.4]{AGG2011}. Thus the map $\HH_\CC^1/T_n \to \HH_\CC^1/H_n$ is an orbifold cover of degree four, $\chi(\HH_\CC^1/T_n) = 4 \chi(\HH_\CC^1/H_n) = 8-2n,$
and the genus of the surface is $g = n-3$.
Furthermore, the surface $\HH_\CC^1/T_n$ is hyperelliptic since the involution~$r_1$ fixes exactly $2(n-3)+2=2g+2$ points, namely the ones corresponding to $p_1$, $r_2p_1$, $p_i$,
$r_2p_i$, for $i=3,\ldots,n-1$. In fact, they are fixed by $r_1$ because the following identities hold in $\HH_\CC^1/T_n$:
\begin{alignat*}{2}
r_1 p_1 \simeq  p_1,& \quad &&r_1 (r_2p_1) \simeq (r_2r_1r_2r_1)^{-1} (r_2p_1) \simeq r_2p_1, \\
r_1 p_i  \simeq (r_1r_i) p_i \simeq p_i,& \quad &&r_1 (r_2p_i)  \simeq (r_2r_1r_2r_1)^{-1}(r_2r_1r_ir_2)(r_2p_i) \simeq r_2p_i.    
\end{alignat*}
Moreover, in this case, the surface has an
extra involution, $r_2$, that fixes exactly~$2$ points: $p_2$ and $r_1p_2$.

\subsection{{\rm PU(2,1)}-Representations of the hyperelliptic group}
\label{subsec:pu21-rep}
Denote by $\Omega\coloneq \{1,\omega,\omega^2\}$ the set of cube roots of unit.
As discussed in Subsection~\ref{subsec:isometries}, a nonidentical orientation-preserving 
involution  of $\HC$ is either a reflection in a point or a reflection in a complex line and 
has a representative in $\SU(2,1)$ of the form $R^p$,
where either $p\in\HC$ if it is a reflection in a point or $p\in\EV$
if it is a reflection in a complex line. Hence, given a representation
$\varrho:H_n\to\PU(2,1)$ with $\varrho(r_i)\neq 1$, there exist points
$p_i\in\PV\setminus\SV$ and $\delta\in\Omega$ such that $\varrho(r_i)=R^{p_i}$,
$i=1,\ldots,n$,
and $R^{p_n}\ldots R^{p_1}=\delta$ in $\SU(2,1)$. On the other hand, starting with a 
relation $R^{p_n}\ldots R^{p_1}=\delta$, we obtain a representation
$\varrho:H_n\to\PU(2,1)$ by defining $\varrho(r_i)\coloneq R^{p_i}$. In both cases we say that
the representation $\varrho$ and the relation $R^{p_n}\ldots R^{p_1}=\delta$
are {\it associated\/}.

We define the~{\it sign\/} of a non-isotropic $p$, denoted $\sigma p$, as $-1$ if 
$\langle p,p \rangle<0$ and as $+1$ if $\langle p,p \rangle>0$.
Given a list of signs $\Sigma\coloneq (\sigma_1,\ldots,\sigma_n)$
and a cube root of unity $\delta\in\Omega$, let 
$\mathcal P_{\Sigma,\delta}H_n$ be the space of representations
$\varrho:H_n\to\PU(2,1)$, with $\varrho(r_i)=R^{p_i}$, satisfying 
\begin{enumerate}[{\bf (P1)}]
\item  $\tance(p_i,p_j)\neq0,1$ if $i\neq j$ (indices considered modulo~$n$);
\item $\sigma p_i=\sigma_i$;
\item $R^{p_n}\ldots R^{p_1}=\delta$ in $\SU(2,1)$.
\end{enumerate}

\begin{rmk}
In {\bf (P1)}, the condition $\tance(p_i,p_j) \neq 0$ means that $p_i,p_j$ are non-orthogonal, and the condition $\tance(p_1,p_2)\neq 1$ means that $p_1,p_2$ are distinct and $\mathrm L\fgeo{p_1,p_2}$ is non-Euclidean.
\end{rmk}

Additionally, the space $\mathcal PH_n$ of representations $H_n \to \PU(2,1)$, $r_i\mapsto R^{p_i}$, satisfying only
{\bf (P1)} is disjoint union of $\mathcal P_{\Sigma,\delta}H_n$ 
varying $\Sigma$ and $\delta$ and the dimension of each nonempty component 
$\mathcal P_{\Sigma,\delta}H_n$ is equal to the one of $\mathcal PH_n$.

\begin{rmk}
The conjugation of a representation $H_n \to \PU(2,1)$, $r_i \mapsto R^{p_i}$, by $I \in \PU(2,1)$ is the representation $r_i \mapsto I R^{p_i}I^{-1}$. Since $I R^{p_i}I^{-1}= R^{I p_i}$, conjugating a representation with centers $p_i$ produces a new representation with centers $I p_i$. Geometrically, $\mathcal P_{\Sigma,\delta}H_n/\PU(2,1)$ can be seen as the space of $n$-tuples of points $(p_1,\ldots,p_n)$ modulo $\PU(2,1)$ satisfying {\bf (P1)}, {\bf (P2)}, and {\bf (P3)}.

The $\PU(2,1)$-character variety for the orbifold group~$H_n$ is 
$\mathop{\mathrm{Hom}}(H_n,\PU(2,1))/\PU(2,1)$. We have that $\mathcal PH_n/\PU(2,1)$ is composed by the representations in character variety 
satisfying our geometric restriction~{\bf (P1)} and is the disjoint union of the spaces $\mathcal P_{\Sigma,\delta}H_n/\PU(2,1)$. 
\end{rmk}

These objects are important for the following reason: consider $(\HH_\CC^1 \times \mathbb D)/H_n$, where $\mathbb D$ is an open disc on $\RR^2$ and the action of $H_n$ on $\HH_\CC^1 \times \mathbb D$ is of the form $g(x,f) = (gx,a(g,x)f)$. We assume that each $a(g,-): \HH_\CC^1 \times \mathbb D \to \mathbb D$ smooth satisfying $a(h,gx)a(g,x)= a(hg,x)$ and $a(1,x)=x$. This action defines the good disc orbibundle $(\HH_\CC^1 \times \mathbb D)/H_n \to \HH_\CC^1/H_n$, given by $[x,f] \mapsto [x]$. If the total space is a complex hyperbolic $4$-orbifold, then there exists $\Gamma \subset \PU(2,1)$ such that $(\HH_\CC^1 \times \mathbb D)/H_n \simeq \HC/\Gamma$, meaning that $H_n \simeq \Gamma \subset \PU(2,1)$. Thus, we have a representation $\varrho: H_n \to \PU(2,1)$. Conjugating $\varrho$ by $\PU(2,1)$ provides the same complex hyperbolic structure on the disc bundle. Thus, we can view disc orbibundles as points of the $\PU(2,1)$-character variety for $H_n$.

\subsection{Bendings and bending-deformations of representations}
\label{subsec:bendings}
When $p_1,p_2$ and $p_1',p_2'$ are pairs of points with $\tance(p_1,p_2)\neq 0,1$, 
$\tance(p_1',p_2')\neq0,1$, and $\sigma p_i=\sigma p_i'$, a relation 
of the form $R^{p_2'}R^{p_1'}=R^{p_2}R^{p_1}$ is called a {\it bending relation\/} (see~\cite{Sasha2012,spell}). 
By~\cite[Theorem~4.7]{spell} (see also \cite[Proposition~2.6]{Sasha2012}), every 
such relation has the following description: 
$p_1'$ and $p_2'$ are obtained by moving $p_1$ and $p_2$ along the geodesic 
$\mathrm G\fgeo{p_1,p_2}$,  maintaining the tance between them, that is, maintaining $\tance(p_1,p_2)=\tance(p_1',p_2')$.
In the case where $\sigma p_1=\sigma p_2$, 
bending moves $p_1$ and $p_2$ over $\mathrm G\fgeo{p_1,p_2}$ preserving the
distance between~$p_1$ and~$p_2$. If $\sigma p_1\neq \sigma p_2$, denoting
by $\tilde p_1$ the point in $\mathrm L\fgeo{p_1,p_2}$ that is orthogonal 
to $p_1$, bending moves $p_1$ and $p_2$ preserving the distance between
$\tilde p_1$ and $p_2$. (For the distance between points in~$\EV$, see Subsection~\ref{subsec:complex-plane}.)

More precisely, given a product of reflections $R^{p_2}R^{p_1}$, where $\tance(p_1,p_2)\neq 0,1$, there is 
a one-parameter subgroup $B:\mathbb R\to\SU(2,1)$ such that:
\begin{itemize}
\item $B(s)$ lies in the centralizer of $R^{p_2}R^{p_1}$ for every $s\in \RR$, meaning that
$B(s)R^{p_2}R^{p_1}B(s)^{-1}=R^{p_2}R^{p_1}$. Since $B(s)R^{p_2}R^{p_1}B(s)^{-1} = R^{B(s)p_2}R^{B(s)p_1}$, we have
$$R^{B(s)p_2}R^{B(s)p_1} = R^{p_2}R^{p_1};$$
\item if $p_1',p_2'\in\PV\setminus\SV$ satisfy $R^{p_2'}R^{p_1'}=R^{p_2}R^{p_1}$ and
$\sigma p_i'=\sigma p_i$, for $i=1,2$,
then $p_1'=B(s)p_1$ and $p_2'=B(s)p_2$ for some $s \in \RR$.
\end{itemize}
Elements $B(s)$ of the one-parameter subgroup of the centralizer of 
$R\coloneq R^{p_2}R^{p_1}$ are called {\it bendings of $R$\/}.

\smallskip

Now, suppose we have a representation $\varrho\in\mathcal P_{\Sigma,\delta}H_n$ and consider
the associated relation 
$$R^{p_n}\ldots R^{p_{i+2}}R^{p_{i+1}}R^{p_i}R^{p_{i-1}}\ldots R^{p_1}=\delta$$
given by~{\bf (P3)}. Since $\varrho$ satisfies {\bf (P1)},
we can consider a nonidentical bending $B(s)$ of $R^{p_{i+1}}R^{p_i}$. Defining
$p_i'\coloneq B(s)p_i$ and $p_{i+1}'\coloneq B(s)p_{i+1}$ we have $\sigma p_i'=\sigma p_i$, 
$\sigma p_{i+1}'=\sigma p_{i+1}$, and
$R^{p_{i+1}'}R^{p_i'}=R^{p_{i+1}}R^{p_i}$. Hence, substituting
$R^{p_{i+1}}R^{p_i}$ by $R^{p_{i+1}'}R^{p_i'}$ in the given relation, we obtain a new 
relation, namely 
$$R^{p_n}\ldots R^{p_{i+2}}R^{p_{i+1}'}R^{p_i'}R^{p_{i-1}}\ldots R^{p_1}=\delta.$$
Note that for small parameters $s$, the representation given by the new relation
will also satisfy {\bf (P1--P3)}, and therefore we have a new representation 
$\widehat\varrho\in\mathcal P_{\Sigma,\delta}H_n$. 
We say the relation $\widehat\varrho$ is a {\it bending-deformation\/} of~$\varrho$.

\subsection{The dimension of the representation space}
\label{subsec:dimension}
From Subsection~\ref{subsec:hyperbolic-spaces}, a tangent vector
$t\in T_p\PV$ at a point~$p\in\PV\setminus\SV$ is a $\CC$-linear transformation
$\mathbb Cp\to p^\perp$. In another words, for an arbitrary representative $p \in V \setminus \{0\}$ we have
$$t = \frac{\langle \Id,p\rangle}{\langle p,p \rangle}t(p)$$
with $t(p) \in p^\perp$. Note that $t$ can be seen as a $\CC$-linear endomorphism of $V$ by defining $t=0$ on $p^\perp$. The {\it adjoint} of $t$ is the unique $\CC$-linear endomorphism of $V$ satisfying $\langle tx,y \rangle = \langle x,t^\ast y\rangle$ for $x,y \in V$. More explicitly, the adjoint of $t$ can be written as
$$t^\ast = \frac{\langle \Id, t(p) \rangle}{\langle p,p \rangle} p.$$

\begin{lemma} 
\label{lemma:diffr}
For\/ $p\in\PV\setminus\SV$ and\/ $t\in T_{p}\PV$,
$$\left.\frac{d}{d\varepsilon}\right|_{\varepsilon=0}R^{p+\varepsilon t(p)}=
2(t+t^\ast).$$
\end{lemma}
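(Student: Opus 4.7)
The plan is to prove this by a direct differentiation of the explicit formula $R^p\colon x\mapsto 2\frac{\langle x,p\rangle}{\langle p,p\rangle}p-x$, using the identification $T_p\PV=\mathrm{Lin}_\CC(\CC p,p^\perp)$ to pin down what $t$ and $t^\ast$ mean as endomorphisms of $V$.

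First, I would pick a representative $p\in V\setminus\{0\}$ and write $p_\varepsilon\coloneq p+\varepsilon\, t(p)$, where $t(p)\in p^\perp$ by the definition of a tangent vector at $p$. Substituting into the formula,
\[
R^{p_\varepsilon}(x) \;=\; 2\,\frac{\langle x,p_\varepsilon\rangle}{\langle p_\varepsilon,p_\varepsilon\rangle}\,p_\varepsilon - x.
\]
Now I differentiate at $\varepsilon=0$. The numerator contributes $\langle x,t(p)\rangle$, the vector factor $p_\varepsilon$ contributes $t(p)$, and the key observation is that the denominator is stationary: since $\langle p,t(p)\rangle=\langle t(p),p\rangle=0$, expanding $\langle p_\varepsilon,p_\varepsilon\rangle$ gives $\langle p,p\rangle+O(\varepsilon^2)$. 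This leaves exactly two surviving terms
\[
\left.\tfrac{d}{d\varepsilon}\right|_{\varepsilon=0}R^{p_\varepsilon}(x) \;=\; 2\,\frac{\langle x,p\rangle}{\langle p,p\rangle}\,t(p) \;+\; 2\,\frac{\langle x,t(p)\rangle}{\langle p,p\rangle}\,p.
\]

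Finally I would match these two terms with the descriptions of $t$ and $t^{\ast}$ as endomorphisms of $V$ given just before the statement. By definition, $t$ vanishes on $p^\perp$ and acts on $\CC p$ by $t(x)=\frac{\langle x,p\rangle}{\langle p,p\rangle}t(p)$, which is precisely the first summand divided by $2$. For the adjoint, the formula $t^\ast=\frac{\langle \Id,t(p)\rangle}{\langle p,p\rangle}\,p$ gives $t^\ast(x)=\frac{\langle x,t(p)\rangle}{\langle p,p\rangle}p$, matching the second summand (one may also verify $\langle tx,y\rangle=\langle x,t^\ast y\rangle$ to sanity-check this identification). Hence the derivative equals $2(t+t^\ast)(x)$, as claimed.

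There is really no serious obstacle: the only subtle point is being careful that $R^{p+\varepsilon t(p)}$ is well-defined as a map of vectors (not just of projective classes), so that the derivative makes sense as an endomorphism of $V$; one must also make sure the representative $t(p)$ chosen for the tangent vector is consistent with the conventions for $t$ and $t^\ast$. Both are handled by sticking to the explicit formulas from Subsection~\ref{subsec:hyperbolic-spaces} and using $t(p)\in p^\perp$ at the single place where it matters, namely to kill the derivative of the denominator.
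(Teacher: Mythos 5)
Your proposal is correct and follows essentially the same route as the paper: expand $R^{p+\varepsilon t(p)}$ from the explicit reflection formula, use $\langle p,t(p)\rangle=0$ to see that the denominator is $\langle p,p\rangle+O(\varepsilon^2)$, and identify the two surviving first-order terms with $t$ and $t^\ast$ via the formulas from Subsection~\ref{subsec:hyperbolic-spaces}. Nothing essential differs.
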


\begin{proof}
Note that
\begin{align*}
R^{p+\varepsilon t(p)}&= 
2\frac{\langle\Id,p+\varepsilon t(p)\rangle}
{\langle p+\varepsilon t(p),p+\varepsilon t(p)\rangle}(p+\varepsilon t(p))-\Id\\
&=2\Big[\frac{\langle\Id,p\rangle+\varepsilon\langle \Id,t(p)\rangle}
{\langle p,p \rangle+\varepsilon^2\langle t(p),t(p)\rangle}\Big]p
+2\varepsilon\Big[\frac{\langle\Id,p\rangle+\varepsilon\langle\Id,t(p)\rangle}
{\langle p,p \rangle+\varepsilon^2\langle t(p),t(p)\rangle}\Big]t(p)-\Id\\
&=- \Id+2 \frac{\langle\Id,p\rangle}{\langle p,p \rangle} p +2\varepsilon\left(\frac{\langle \Id,t(p)\rangle}{\langle p,p \rangle} p + \frac{\langle\Id,p\rangle}{\langle p,p \rangle} t(p) \right) + o(\varepsilon),
\end{align*}
where we used that $$\frac{1}{\langle p,p \rangle+\varepsilon^2 \langle t(p),t(p) \rangle } = \frac1{\langle p,p \rangle} + o(\varepsilon).$$
Therefore
$$\left.\frac{d}{d\varepsilon}\right|_{\varepsilon=0}R^{p+\varepsilon t(p)} = 2\langle\Id,v\rangle p + 2\langle\Id,p\rangle v = t+t^\ast$$
and the result follows.
\end{proof}

\begin{lemma}
\label{lemma:proptast}
Given\/ $p\in\PV\setminus\SV$ 
and\/ $t\in T_p\PV$,
$$t+t^\ast=(t-t^\ast)R^p\ \ \text{and\/}\ \ (t-t^\ast)R^p+R^p(t-t^\ast)=0.$$
\end{lemma}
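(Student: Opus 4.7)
The plan is to reduce both identities to matrix-block computations on the orthogonal splitting $V = \CC p \oplus p^\perp$. First I would unpack the definitions: the formula $t = \frac{\langle \cdot, p\rangle}{\langle p,p\rangle}\, t(p)$, with $t(p) \in p^\perp$, shows that $t$, viewed as a $\CC$-endomorphism of $V$, vanishes on $p^\perp$ and sends $\CC p$ into $p^\perp$. Dually, $t^\ast = \frac{\langle \cdot, t(p)\rangle}{\langle p,p\rangle}\, p$ vanishes on $\CC p$ (since $t(p)\perp p$ forces $t^\ast(p)=0$) and sends $p^\perp$ into $\CC p$.

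With that in place, the key observation is that $R^p$ is the diagonal involution of the splitting $V = \CC p \oplus p^\perp$: it acts as $+\Id$ on $\CC p$ and as $-\Id$ on $p^\perp$. The four composites are then immediate from the block picture: $tR^p = t$ (both sides vanish on $p^\perp$ and agree on $\CC p$), $t^\ast R^p = -t^\ast$, $R^p t = -t$ (since the image of $t$ lies in $p^\perp$), and $R^p t^\ast = t^\ast$ (since the image of $t^\ast$ lies in $\CC p$). Subtracting the first pair yields the first identity $(t - t^\ast) R^p = t + t^\ast$; subtracting the second pair yields $R^p(t - t^\ast) = -(t + t^\ast) = -(t-t^\ast)R^p$, which is the second identity.

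There is essentially no obstacle here: the statement is a purely algebraic identity about how $t$, $t^\ast$, and $R^p$ sit in the block decomposition of $V = \CC p \oplus p^\perp$. The only point needing a moment's care is the vanishing $t^\ast(p) = 0$, which rests on $t(p)\in p^\perp$ and is what guarantees that $t^\ast$ is block off-diagonal in the correct way. Once this is noted, the two identities fall out without further calculation.
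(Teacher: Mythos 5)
Your proof is correct. The only fact that needs verifying is the block structure you describe, and it does hold: since $p$ is nonisotropic, $V=\CC p\oplus p^\perp$, the reflection $R^p$ acts as $+\Id$ on $\CC p$ and as $-\Id$ on $p^\perp$, the operator $t$ kills $p^\perp$ and maps $\CC p$ into $p^\perp$, and $t^\ast$ kills $\CC p$ (precisely because $t(p)\in p^\perp$, as you note) and maps $p^\perp$ into $\CC p$. From this, $tR^p=t$, $t^\ast R^p=-t^\ast$, $R^pt=-t$, $R^pt^\ast=t^\ast$ follow by checking on each summand, and both identities drop out. This is a genuinely different organization from the paper's argument, which never invokes the eigenspace decomposition of $R^p$: the paper simply substitutes the explicit formula $R^px=2\frac{\langle x,p\rangle}{\langle p,p\rangle}p-x$ into the expression $\frac{\langle x,p\rangle}{\langle p,p\rangle}t(p)-\frac{\langle x,t(p)\rangle}{\langle p,p\rangle}p$ for $(t-t^\ast)x$ and expands, using $\langle p,t(p)\rangle=0$ to simplify. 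The two routes rest on the same two facts ($R^p$ fixes $p$ and negates $p^\perp$; $t(p)\perp p$), but yours packages them structurally, so the conclusion is visible without computation and the sign pattern ($t$ and $t^\ast$ being the two off-diagonal blocks, which anticommute and commute respectively with the grading operator $R^p$) is made transparent; the paper's version is self-contained at the level of formulas and requires no discussion of the splitting. Either is acceptable; yours generalizes more readily to statements about operators that are odd or even for the $\mathbb Z/2$-grading defined by $R^p$.
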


\begin{proof}
For $x\in V$
\begin{align*}
(t-t^\ast)R^p x &= \frac1{\langle p,p \rangle }\bigg\langle2\frac{\langle x,p\rangle}{\langle p,p\rangle}p-x,p\bigg\rangle t(p)
- \frac1{\langle p,p \rangle }\bigg\langle2\frac{\langle x,p\rangle}{\langle p,p\rangle}p-x,t(p)\bigg\rangle p\\
&= \frac{\langle x, p\rangle}{\langle p,p \rangle } t(p)+\frac{\langle x,t(p)\rangle}{\langle p,p \rangle }p\\
&= (t+t^\ast)x.
\end{align*}
Also,
\begin{align*}
R^p(t-t^\ast)x&=R^p\left(\frac{\langle x,p\rangle}{\langle p,p \rangle} t(p)-\frac{\langle x,t(p)\rangle}{\langle p,p \rangle} p \right)\\
&=\frac{\langle x,p\rangle}{\langle p,p \rangle} R^p t(p)-\frac{\langle x,t(p)\rangle}{\langle p,p \rangle} R^p p\\
&=\frac{\langle x,p\rangle}{\langle p,p \rangle} \left(2 \frac{\langle t(p),p \rangle}{\langle p,p \rangle }p-t(p) \right)-\frac{\langle x,t(p)\rangle}{\langle p,p \rangle} \left(2 \frac{\langle p,p \rangle}{\langle p,p \rangle }p - p \right)\\
&= -(t+t^\ast)x\\
&=-(t-t^\ast)R^p x.
\end{align*}
where we used the first part in the last identity.
\end{proof}

Let $n\geq 5$ and consider the function $\Phi:(\PV\setminus\SV)^n\to\SU(2,1)$ given by
$$\Phi(p_1,\ldots, p_n)\coloneq R^{p_n}\ldots R^{p_1}.$$

\begin{prop}
\label{prop:dim-rep-su21}
Let\/ $\delta\in\Omega$. If
there exists\/ $(p_1,\ldots,p_n)\in\Phi^{-1}(\delta)$ such that\/ $\tance(p_1,p_n) \neq 0$ and\/ $\tance(p_1,p_n) \neq 1$,
then\/ $\Phi^{-1}(\delta)$ has\/ {\rm(}real\/{\rm)} dimension\/~$4n-8$.
\end{prop}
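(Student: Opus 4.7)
My plan is to apply the regular-value theorem. I will show that the differential $d\Phi$ at the specified point $(p_1,\ldots,p_n)$ is surjective onto $T_\delta\SU(2,1)\simeq\mathfrak{su}(2,1)$; consequently $\Phi^{-1}(\delta)$ is locally a submanifold of real codimension $\dim_\RR\SU(2,1)=8$ in $(\PV\setminus\SV)^n$, hence of dimension $4n-8$.

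First I compute $d\Phi$ from Lemma~\ref{lemma:diffr}: varying each factor gives
\[
d\Phi(t_1,\ldots,t_n)\;=\;2\sum_{i=1}^n R^{p_n}\cdots R^{p_{i+1}}\,(t_i+t_i^\ast)\,R^{p_{i-1}}\cdots R^{p_1}.
\]
Right-translating by $\Phi^{-1}$, the middle factor $R^{p_{i-1}}\cdots R^{p_1}\cdot\Phi^{-1}$ telescopes to $R^{p_i}R^{p_{i+1}}\cdots R^{p_n}$; then Lemma~\ref{lemma:proptast}'s identity $(t_i+t_i^\ast)R^{p_i}=t_i-t_i^\ast$ converts the $i$-th summand into $2\,\mathrm{Ad}(A_i)(t_i-t_i^\ast)$, where $A_i:=R^{p_n}\cdots R^{p_{i+1}}$. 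The same lemma identifies $t_i\mapsto t_i-t_i^\ast$ as a real-linear isomorphism from $T_{p_i}\PV$ onto the four-real-dimensional subspace $W_{p_i}\subset\mathfrak{su}(2,1)$ of elements anticommuting with $R^{p_i}$ (the $-1$-eigenspace of $\mathrm{Ad}(R^{p_i})$). Hence
\[
\mathrm{Image}(d\Phi\cdot\Phi^{-1})\;=\;\sum_{i=1}^n\mathrm{Ad}(A_i)(W_{p_i}).
\]

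Next I use the relation $\Phi=\delta$ and centrality of $\delta$ to note $A_1=\delta R^{p_1}$, so $\mathrm{Ad}(A_1)=\mathrm{Ad}(R^{p_1})$, which acts as $-\Id$ on $W_{p_1}$; in particular $\mathrm{Ad}(A_1)(W_{p_1})=W_{p_1}$. Trivially $\mathrm{Ad}(A_n)(W_{p_n})=W_{p_n}$. To finish I must show $\sum_{i=1}^n\mathrm{Ad}(A_i)(W_{p_i})=\mathfrak{su}(2,1)$. Passing to Killing-orthogonal complements, this is equivalent to the vanishing of the common centralizer $\bigcap_{i=1}^n\mathfrak c(R^{q_i})$, where $q_i:=A_ip_i$ (so $q_1=p_1$, $q_n=p_n$). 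A short induction exploiting the identity $R^{q_i}=(R^{p_1}\cdots R^{p_{i-1}})R^{p_i}(R^{p_1}\cdots R^{p_{i-1}})^{-1}$ shows that $\langle R^{q_1},\ldots,R^{q_n}\rangle=\langle R^{p_1},\ldots,R^{p_n}\rangle$, so this intersection equals the Lie algebra of the centralizer of the full representation. The hypothesis $\tance(p_1,p_n)\neq 0,1$ guarantees that $R^{p_1}$ and $R^{p_n}$ do not commute, which already forces the centralizer of $\langle R^{p_1},R^{p_n}\rangle$ to have Lie algebra of dimension at most $1$.

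The hardest step will be killing this last residual direction. Restricting to just the endpoints, $W_{p_1}+W_{p_n}$ has codimension exactly one in $\mathfrak{su}(2,1)$; the missing direction is the infinitesimal bending of the pair $(p_1,p_n)$, which lies in the one-dimensional simultaneous centralizer $\mathfrak c(R^{p_1})\cap\mathfrak c(R^{p_n})$. I expect to rule out that all intermediate contributions $\mathrm{Ad}(A_i)(W_{p_i})$ with $i\in\{2,\ldots,n-1\}$ miss this residual direction by a direct argument: were every $R^{q_i}$ to commute with it, then $\langle R^{p_1},\ldots,R^{p_n}\rangle$ would preserve the corresponding one-parameter eigenspace decomposition of $V$, and a parity/character computation using the relation $\Phi=\delta\in\Omega$ together with $n\geq 5$ would force $\delta$ and the configuration into a degenerate form excluded either by $\tance(p_1,p_n)\neq 0,1$ or by $\delta$ being a nontrivial cube root of unity.
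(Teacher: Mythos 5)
You follow the same overall strategy as the paper (regular-value theorem via surjectivity of $d\Phi$, using Lemmas~\ref{lemma:diffr} and~\ref{lemma:proptast}), but your structural claim that $W_{p_1}+W_{p_n}$ has codimension exactly one in $\mathfrak{su}(2,1)$ is correct --- and it contradicts the paper's own proof, which asserts that the eight vectors $t_i^\ast-t_i$, $s_j-s_j^\ast$ (these span precisely $W_{p_1}$ and $W_{p_n}$) are linearly independent. They are not. Writing $v$ for the polar point of $\mathrm L\fgeo{p_1,p_n}$, the element $X\in\mathfrak{su}(2,1)$ acting as $i$ on $\CC p_1+\CC p_n$ and as $-2i$ on $\CC v$ commutes with both $R^{p_1}$ and $R^{p_n}$, and it is orthogonal to $W_{p_1}+W_{p_n}$ for the nondegenerate trace form: any $Y$ anticommuting with $R^{p}$ interchanges $\CC p$ and $p^\perp$, hence has vanishing diagonal in a basis of eigenvectors of $X$ adapted to $p\in\{p_1,p_n\}$, so $\trace(XY)=0$. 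Thus $\dim(W_{p_1}+W_{p_n})=7$ and the eight vectors satisfy one linear relation. The precise slip in the paper is the step ``applying to $c$ \dots\ by taking the product $\langle-,v\rangle$, that $b_3=b_4=0$'': all three vectors $p_1$, $p_n$, $d$ occurring in that identity lie in $v^\perp$, so pairing with $v$ yields $0=0$ and gives no information. (A terminological nit: the residual direction is the infinitesimal rotation about the complex geodesic $\mathrm L\fgeo{p_1,p_n}$, which fixes $p_1$ and $p_n$, not the infinitesimal bending, which moves them along $\mathrm G\fgeo{p_1,p_n}$; but you identify the correct one-dimensional subalgebra.)

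The gap in your proposal is the last step, which you only sketch, and it cannot be closed uniformly in $\delta$. Your reduction is right: $d\Phi$ fails to be surjective exactly when $X$ centralizes every $R^{q_i}$, i.e.\ when every centre lies on $\mathbb P(\CC p_1+\CC p_n)$ or equals $v$. Restricting the relation to the eigenline $\CC v$ then gives $\delta=(-1)^k$ with $k\ge 2$ the number of centres on the line, and $\delta\in\Omega$ forces $\delta=1$; so for $\delta=\omega^{\pm1}$ (the only cases the paper uses downstream) your argument, once written out, does complete the proof. For $\delta=1$, however, degenerate points satisfying the hypothesis genuinely exist: for $n=5$ take $p_1\perp p_2$ spanning a nondegenerate plane $W$, $p_3=v$ the polar point of $W$, and $p_4=p_5\in\mathbb P W$ with $\tance(p_1,p_5)\neq 0,1$; then $R^{p_5}R^{p_4}R^{p_3}R^{p_2}R^{p_1}=\Id$ while $d\Phi$ has rank at most $7$ there. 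Hence neither the paper's pointwise claim nor your pointwise strategy can hold for $\delta=1$, and one must instead establish surjectivity at a generic point of the fibre and separately bound the dimension of the degenerate locus (note also that even with surjectivity on the open set where $\tance(p_1,p_n)\neq 0,1$, concluding the dimension of all of $\Phi^{-1}(\delta)$ needs an extra, unstated, argument). In short: your diagnosis of where the difficulty lies is more accurate than the paper's proof, but the proof you propose is not yet complete, and its completion genuinely requires the intermediate points $p_2,\dots,p_{n-1}$, which both you and the paper leave untouched.
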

\begin{rmk}
The condition $\tance(p_1,p_n) \neq 1$ implies that $p_1 \neq p_n$ and that the projective line $\mathrm L\fgeo{p_1,p_n}$ is either spherical or hyperbolic.
\end{rmk}
\begin{proof}
The proof here is analogous to the one in~\cite{Gaye2008} (but avoiding coordinates). 
We prove that $d\Phi$ has maximal ranking in points of $\Phi^{-1}(\delta)$.
As consequence, the result will follow because~$\dim\SU(2,1)=8$ and $\dim(\PV\setminus SV)^n=4n$.

Fix representatives for the points $p_1,\ldots, p_n$.
Given $t\in T_{p_1}\PV\subset\mathop{\mathrm{Lin}}_{\mathbb R}(V,V)$,
let $\gamma_t$ be the curve in $(\PV\setminus\SV)^n$ given by 
$$\gamma_t(\varepsilon)=
(p_1+\varepsilon t(p_1),p_2\ldots,p_n).$$ 

From the product rule and the previous remark,
$$\left.\frac{d}{d\varepsilon}\right|_{\varepsilon=0}\Phi(\gamma_t(\varepsilon))=
\delta\left.\frac{d}{d\varepsilon}\right|_{\varepsilon=0}R^{p_1}R^{p_1+\varepsilon t(p_1)}
=2\delta(t^\ast-t).$$
In the same way (but without using the remark), for $s\in T_{p_n}\PV$, we consider the curve 
$$\lambda_s(\varepsilon)=
(p_1,\ldots,p_{n-1},p_n+\varepsilon s(p_n)),$$
and
$$\left.\frac{d}{d\varepsilon}\right|_{\varepsilon=0}\Phi(\lambda_s(\varepsilon))=
\delta\left.\frac{d}{d\varepsilon}\right|_{\varepsilon=0}R^{p_n+\varepsilon s(p_n)}R^{p_n}
=2\delta(s-s^\ast).$$

Since, by hypothesis, $\mathrm{L}\fgeo{p_1,p_n}$ is noneuclidean, there exists
a nonisotropic point $v\in \PV$ such that $\mathbb Pv^\perp=\mathrm L\fgeo{p_1,p_n}$. Denote by~$c$ 
the point in $\mathbb P p_1^\perp$ that is orthogonal to $v$, and by~$d$
the point in $\mathbb Pp_n^\perp$ that is orthogonal to~$v$, where we are considering ortogonality with respect to the Hermitian form. Consider representatives for $v$, $c$, and $d$
(also denoted $v$, $c$, and $d$). Note that
$c$ and $d$ are nonisotropic because $p_1, v, c$ and $p_n, v, d$ are orthogonal basis for $V$ and the Hermitian form has signature $-++$. Defining
$$t_1\coloneq \frac{\langle-,p_1\rangle}{\langle p_1,p_1\rangle} v,\ \ t_2\coloneq \frac{\langle-,p_1\rangle}{\langle p_1,p_1\rangle} iv,\ \ 
t_3\coloneq \frac{\langle-,p_1\rangle}{\langle p_1,p_1\rangle} c,\ \ t_4\coloneq \frac{\langle-,p_1\rangle}{\langle p_1,p_1\rangle} ic,$$
and 
$$s_1\coloneq \frac{\langle-,p_n\rangle}{\langle p_n,p_n\rangle} v,\ \ s_2\coloneq \frac{\langle-,p_n\rangle}{\langle p_n,p_n\rangle} iv,\ \ 
s_3\coloneq \frac{\langle-,p_n\rangle}{\langle p_n,p_n\rangle} d,\ \ s_4\coloneq \frac{\langle-,p_n\rangle}{\langle p_n,p_n\rangle} id,$$
we obtain orthogonal bases $\{t_1,t_2,t_3,t_4\}$ and $\{s_1,s_2,s_3,s_4\}$ 
for $T_{p_1}\PV$ and $T_{p_n}\PV$, respectively. Lets prove that the set
\begin{multline*}
\bigg\{\left.\frac{d}{d\varepsilon}\right|_{\varepsilon=0}\Phi(\gamma_{t_1}(\varepsilon)),
\left.\frac{d}{d\varepsilon}\right|_{\varepsilon=0}\Phi(\gamma_{t_2}(\varepsilon)),
\left.\frac{d}{d\varepsilon}\right|_{\varepsilon=0}\Phi(\gamma_{t_3}(\varepsilon)),
\left.\frac{d}{d\varepsilon}\right|_{\varepsilon=0}\Phi(\gamma_{t_4}(\varepsilon)),\\
\left.\frac{d}{d\varepsilon}\right|_{\varepsilon=0}\Phi(\lambda_{s_1}(\varepsilon)),
\left.\frac{d}{d\varepsilon}\right|_{\varepsilon=0}\Phi(\lambda_{s_2}(\varepsilon)),
\left.\frac{d}{d\varepsilon}\right|_{\varepsilon=0}\Phi(\lambda_{s_3}(\varepsilon)),
\left.\frac{d}{d\varepsilon}\right|_{\varepsilon=0}\Phi(\lambda_{s_4}(\varepsilon))
\bigg\},
\end{multline*}
or, equivalently, the set
$$\big\{t_1^\ast-t_1,t_2^\ast-t_2,t_3^\ast-t_3,
t_4^\ast-t_4,s_1-s_1^\ast,s_2-s_2^\ast,s_3-s_3^\ast,
s_4-s_4^\ast\big\}$$
is a linearly independent subset of $\mathfrak{su}(2,1)$.

\medskip

Note that $\langle c,d \rangle\neq 0$, otherwise we would have $c=p_n$ and $d=p_1$ in $\HH_\CC^2$, leading to $\tance(p_1,p_n) = 0$. Similarly, $\langle p_n, c \rangle \neq 0$, otherwise we would have $p_n^\perp = \CC c+\CC v=p_1^\perp$ and, as consequence, $\tance(p_1,p_n) = 1$.

\medskip

Suppose that
\begin{equation}
\label{eq:lincomb}
a_1(t_1^\ast-t_1)+\cdots+a_4(t_4^\ast-t_4)+b_1(s_1-s_1^\ast)+\cdots+b_4(s_4-s_4^\ast)=0,
\end{equation}
for some $a_i,b_i\in\mathbb R$. Applying~\eqref{eq:lincomb} to~$v$, we get
$$\big(a_1-i a_2\big)p_1+(-b_1+ib_2)p_n=0,$$
which implies $a_1=a_2=b_1=b_2=0$ since $v$ in nonisotropic, $p_1$ and $p_n$
are distinct points in~$\PV$, and $a_i,b_j\in\mathbb R$.

Applying to~$c$, we get
$$\big(a_3\langle c,c\rangle -ia_4\langle c,c \rangle\big)p_1=
\big(b_3\langle c,d\rangle -ib_4\langle c,d\rangle\big)p_n-
\big(b_3\langle c,p_n\rangle+ib_4\langle c,p_n\rangle)d$$
which implies, by taking the product $\langle -,v\rangle$, that
$b_3=b_4=0$, since $\langle c,p_n\rangle\neq 0$. Which, in its turn, implies
that $a_3=a_4=0$ from the previous equation.
\end{proof}

\begin{cor}
\label{cor:dim-pdelta}
Given a list of signs\/ $\Sigma$, if\/ $\mathcal P_{\Sigma,\delta}H_n$ is nonempty, then\/ 
$$\dim\mathcal P_{\Sigma,\delta}H_n=4n-8\quad\text{and}\quad \dim\mathcal P_{\Sigma,\delta}H_n/\PU(2,1)=4n-16.$$
\end{cor}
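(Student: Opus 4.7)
The plan is to derive both dimension equalities as direct consequences of Proposition~\ref{prop:dim-rep-su21}, exploiting that $\mathcal P_{\Sigma,\delta}H_n$ sits as an open subset of the fibre $\Phi^{-1}(\delta)$ and that $\PU(2,1)$ acts on it with generically finite stabiliser.

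First, I would verify the openness claim. The sign condition (P2) picks out a union of connected components of $(\PV\setminus\SV)^n$, since $\sigma p$ is locally constant on $\PV\setminus\SV$. The conditions (P1) are strict open conditions on continuous functions (namely $\tance(p_i,p_j) \neq 0$ and $\neq 1$), while (P3) is precisely $\Phi = \delta$. Assume $\mathcal P_{\Sigma,\delta}H_n \neq \emptyset$ and pick any $(p_1,\ldots,p_n)$ in it; in particular $\tance(p_1,p_n) \neq 0, 1$, so Proposition~\ref{prop:dim-rep-su21} applies and gives $\dim \Phi^{-1}(\delta) = 4n-8$. Since $\mathcal P_{\Sigma,\delta}H_n$ is open in $\Phi^{-1}(\delta)$, this yields $\dim \mathcal P_{\Sigma,\delta}H_n = 4n-8$.

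For the quotient, $\PU(2,1)$ acts by simultaneous conjugation $I\cdot(p_1,\ldots,p_n) = (Ip_1,\ldots,Ip_n)$, and the stabiliser of $\varrho$ equals the centraliser of $\varrho(H_n)$ in $\PU(2,1)$. I would argue that on a dense open subset of $\mathcal P_{\Sigma,\delta}H_n$ these stabilisers are finite. A generic $\varrho$ has irreducible image: a common invariant complex line (or common fixed point) for all $R^{p_i}$ would force each $p_i$ into one of two specific complex lines determined by the invariant subspace, cutting out a proper closed subvariety of the $(4n-8)$-dimensional level set. For an irreducible subgroup of $\SU(2,1)$, Schur's lemma gives centraliser equal to the centre $\{1,\omega,\omega^2\}$, whose image in $\PU(2,1)$ is trivial. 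Thus generic orbits have dimension $8$, and $\dim \mathcal P_{\Sigma,\delta}H_n/\PU(2,1) = (4n-8) - 8 = 4n-16$.

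The main obstacle is making the generic-irreducibility step precise, ensuring that reducible configurations form a proper closed subvariety rather than a spurious component of equal dimension. Everything else is a clean consequence of Proposition~\ref{prop:dim-rep-su21} together with the openness of conditions (P1)–(P2).
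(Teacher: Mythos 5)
The paper offers no proof of this corollary, which is meant to follow from Proposition~\ref{prop:dim-rep-su21} exactly as you argue: conditions (P1)--(P2) cut out an open subset of $\Phi^{-1}(\delta)$, and passing to the quotient drops the dimension by $\dim\PU(2,1)=8$ because generic stabilisers are trivial. Your proposal is correct and matches the intended argument; the genericity point you rightly flag can be closed by noting that the stabiliser of $\varrho$ is the pointwise stabiliser of $\{p_1,\dots,p_n\}$ in $\PU(2,1)$, which by the pairwise non-orthogonality in (P1) is trivial whenever the $p_i$ span $V$, and the surjectivity of $d\Phi$ already on $T_{p_1}\PV\times T_{p_n}\PV$ shows that the non-spanning locus is nowhere dense in $\Phi^{-1}(\delta)$.
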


\section{Decomposing isometries as the product of three reflections}
\label{sec:decom-iso}
The goal of this section is to construct a length~$5$ relation between involutions in 
a way that it produces a representation $\varrho\in\mathcal P_{\Sigma,\delta}$, 
for $\Sigma = (+,-,-,-,-)$ and $\delta\in\Omega$, as defined in 
Section~\ref{sec:hyper}.

\subsection{The relative character variety}
\label{subsec:relative}
Let $\pmb\sigma\coloneq (\sigma_1,\sigma_2,\sigma_3)$ be a triple of signs with at most one $+1$ and let
$\tau\in\mathbb C\setminus\Delta$ be a point outside Goldman's deltoid (the trace of a loxodromic isometry).
We will say that a triple of nonisotropic points $(p_1,p_2,p_3)$ is {\it strongly regular\/} 
with respect to $\pmb\sigma,\tau$ or,
for short, that $(p_1,p_2,p_3)$ is a $(\pmb\sigma,\tau)${\it -triple\/} if 
\begin{itemize}
\item  $p_1,p_2,p_3$ are pairwise nonorthogonal and do not lie
in the same complex line,
\item $\sigma p_i=\sigma_i$, $i=1,2,3$,
\item $\trace R^{p_3}R^{p_2}R^{p_1}=\tau$.
\end{itemize}
This is a simplification of~\cite[Definition~5.1]{spell} for the
case of involutions. We will call a triple satisfying only the first item above a {\it regular\/} triple. 

\smallskip

Given $\pmb\sigma$ and $\tau$ as above, and given a triple of nonisotropic points $(p_1,p_2,p_3)$, 
define
\begin{equation*}
\begin{gathered}
\kappa\coloneq \frac{\tau-3}{8},\quad s_1\coloneq \tance(p_1,p_2),\quad s_2\coloneq \tance(p_2,p_3),
\quad s_3\coloneq \tance(p_3,p_1),\\
\eta\coloneq \frac{g_{12}g_{23}g_{31}}{g_{11}g_{22}g_{33}},\quad \text{and}\quad
s\coloneq \real\eta,
\end{gathered}
\end{equation*}
where $[g_{ij}]$ is the Gram matrix of $(p_1,p_2,p_3)$.
By~\cite[Theorem~5.2]{spell}, if $(p_1,p_2,p_3)$ is a $(\pmb\sigma,\tau)$-triple then
\begin{equation}
\label{eq:surface}
s_1^2s_2+s_1s_2^2-2s_1s_2s+s^2+(2\real\kappa)s_1s_2+(\imag\kappa)^2=0,
\end{equation}
and
\begin{equation}
\label{eq:inequalities-surface}
\begin{gathered}
\sigma_1\sigma_2 s_1>0,\quad \sigma_1\sigma_2 s_1>\sigma_1\sigma_2,\quad \sigma_2\sigma_3 s_2>0,\quad
\sigma_2\sigma_3 s_2>\sigma_2\sigma_3,\\ 
\sigma_1\sigma_2\sigma_3(2\real\kappa+1)<0.
\end{gathered}
\end{equation}

\begin{rmk}
Identity~\eqref{eq:surface} follows from $\trace R^{p_3}R^{p_2}R^{p_1}=\tau$ while inequalities~\eqref{eq:inequalities-surface} follow from $\sigma_i\sigma_j\tance(p_i,p_j)>0$ and $\det[g_{ij}]<0$.
\end{rmk}

Conversely, if $(s_1,s_2,s)\in\mathbb R^3$ is a solution of~\eqref{eq:surface} satisfying~\eqref{eq:inequalities-surface},
then any triple of nonisotropic points $(p_1,p_2,p_3)$ with Gram matrix
given by $g_{11}\coloneq \sigma_1$, $g_{22}\coloneq \sigma_2$, $g_{33}\coloneq \sigma_3$, $g_{12}=g_{21}\coloneq \sqrt{\sigma_1\sigma_2 s_1}$,
$g_{23}=g_{32}\coloneq \sqrt{\sigma_2\sigma_3 s_2}$,
$$g_{13}\coloneq \sigma_1\sigma_2\sigma_3\frac{s-i\imag\kappa}{\sqrt{\sigma_1\sigma_2 s_1}\sqrt{\sigma_2\sigma_3 s_2}},$$
and $g_{31}\coloneq \overline{g_{13}}$, is a $(\pmb\sigma,\tau)$-triple 
and, in particular, $\trace R^{p_3}R^{p_2}R^{p_1}=\tau$.

Hence, the surface $\mathcal S_{\pmb\sigma,\tau}$ given by equation~\eqref{eq:surface} and inequalities~\eqref{eq:inequalities-surface}  
is the space of all regular triples $(p_1,p_2,p_3) \in \PV$ modulo $\PU(2,1)$ satisfying
$\trace R^{p_3}R^{p_2}R^{p_1}=\tau$ and $\sigma p_i=\sigma_i$, $i=1,2,3$.

\begin{figure}[htb]
\centering
\includegraphics[scale=.8]{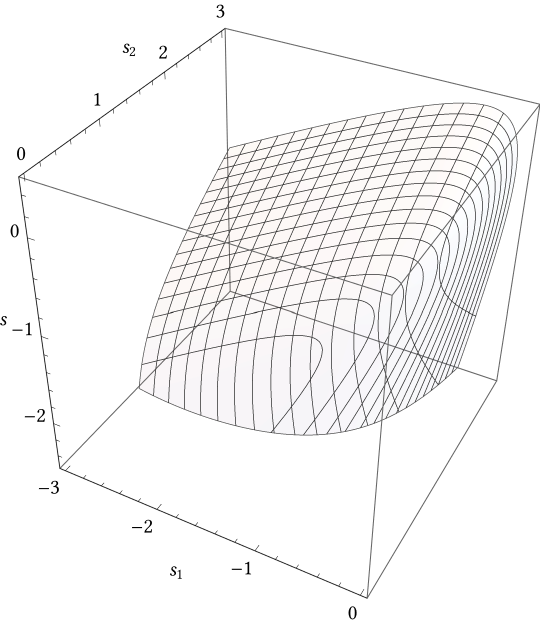}
\caption{Surface $\mathcal S_{\pmb\sigma,\tau}$ in $\RR^3$ obtained by
taking $\pmb\sigma\coloneq (+,-,-)$ and $\tau\coloneq -2.22 - 3.84515\, i$. Vertical (resp. 
horizontal) lines obtained by keeping $s_1$ (resp. $s_2$) constant are marked.}
\label{fig:surf-sigma-tau}
\end{figure}


\subsection{Bending deformations of regular triples}
\label{subsec:bend-triples}

Fixed a $(\pmb\sigma,\tau)$-triple $(p_1,p_2,p_3)$, 
we also fix the isometry $F\coloneq R^{p_3}R^{p_2}R^{p_1}$ which, by construction, 
satisfies~$\trace F=\tau$. Bendings, as described in Subsection~\ref{subsec:bendings}, act on 
this triple in the following way: if~$B(s)$ is a bending of $R^{p_2}R^{p_1}$ 
(resp. of $R^{p_3}R^{p_2}$) we have that 
$F=R^{p_3}R^{B(s)p_2}R^{B(s)p_1}$ and $(B(s)p_1,B(s)p_2,p_3)$ is a $(\pmb\sigma,\tau)$-triple
(resp. $F=R^{B(s)p_3}R^{B(s)p_2}R^{p_1}$ and $(p_1,B(s)p_2,B(s)p_3)$ is a $(\pmb\sigma,\tau)$-triple). 
Thus, considering that a $(\pmb\sigma,\tau)$-triple gives us a point 
in~$\mathcal S_{\pmb\sigma,\tau}$, bendings also
act on~$\mathcal S_{\pmb\sigma,\tau}$ moving points either vertically (over slices
given by $s_1=\text{const}$) or horizontally (over ones given by 
$s_2=\text{const}$), and this movements are performed by bendings of 
$R^{p_2}R^{p_1}$ and $R^{p_3}R^{p_2}$, respectively.

It follows that all points in the surface $\mathcal S_{\pmb\sigma,\tau}$ correspond to 
a certain bendings deformation of the fixed triple $(p_1,p_2,p_3)$. But, this 
correspondence is not biunivocal. In fact, as described in~\cite[Section~4.2]{Sasha2012}, 
after finitely many bendings of $(p_1,p_2,p_3)$, we can end up with a distinct strongly 
regular triple $(p_1',p_2',p_3')$ that is geometrically equal to the starting one.
Roughly speaking, after walking in a closed path composed of vertical and horizontal segments
on $\mathcal S_{\pmb\sigma,\tau}$, we end up in the same point in the surface, but in a distinct 
corresponding strongly regular triple.
Moreover, in Figure~\ref{fig:surf-sigma-tau} we see that vertical and horizontal lines 
might intersect twice, meaning that in some cases it is possible that after two consecutive 
non-trivial bendings we comeback to the original point in~$\mathcal S_{\pmb\sigma.\tau}$, 
but to a distinct strongly regular triple.
In Subsection~\ref{subsec:comp-bend} we exhibit a concrete example of this 
phenomena. 

\subsection{The construction of length 5 relations between reflections}
When does, for given $\pmb\sigma$ and $\tau$, there is a solution for~\eqref{eq:surface}
subjected to~\eqref{eq:inequalities-surface}? Note that 
$\sigma_1\sigma_2\sigma_3(2\real\kappa+1)=\det[g_{ij}]$. Thus, the last condition 
in~\eqref{eq:inequalities-surface} means that $p_1,p_2,p_3$ do not lie in the same complex 
line. Moreover, when $\sigma_1\sigma_2\sigma_3(2\real\kappa+1)>0$, our system has 
no solution.

\begin{figure}[htb!]
\centering
\includegraphics[scale=.9]{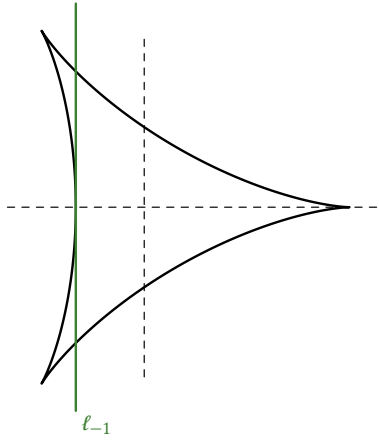}
\caption{Goldman's deltoid $\Delta$ and the line $\ell_{-1}$ tangent to $\Delta$ } 
\label{fig:ell1delt}
\end{figure}

But $\sigma_1\sigma_2\sigma_3(2\real\kappa+1)=0$ if and only if $\real\tau=-1$, that is,
if and only if~$\tau$ lies in the line~$\ell_{-1}$, tangent to~$\partial\Delta$ at~$-1$.
Moreover,

\begin{itemize}
\item If $\sigma_1\sigma_1\sigma_3=+1$, then $\sigma_1\sigma_2\sigma_3(2\real\kappa+1)<0$ if 
and only if $\tau$ lies to the left side of~$\ell_{-1}$;
\item If $\sigma_1\sigma_2\sigma_3=-1$, then $\sigma_1\sigma_2\sigma_3(2\real\kappa+1)<0$ 
if and only if $\tau$ lies to the right side of~$\ell_{-1}$ .
\end{itemize}
When $\pmb\sigma$ and $\tau$ respect the restriction of the items above,
there exists a solution of~\eqref{eq:surface} satisfying~\eqref{eq:inequalities-surface} (see~\cite[Proposition~6.2]{alpha}).
And, of course, given a loxodromic isometry, we can always consider a 
lift of it in $\SU(2,1)$ with trace lying to the left side of~$\ell_{-1}$.

Therefore, if $F\in\SU(2,1)$ is a loxodromic isometry and  
$\pmb\sigma=(\sigma_1,\sigma_2,\sigma_3)$ is a triple of signs with at most one $+1$, 
there is $\delta\in\Omega$ and a regular triple $(p_1,p_2,p_3)$ such that
$$\delta F=R^{p_3}R^{p_2}R^{p_1},$$
and $\sigma p_i=\sigma_i$, $i=1,2,3$. 
In particular,  we obtain a relation length~5 relation between reflection
\begin{equation}
\label{eq:relation}
R^{p_5}R^{p_4}R^{p_3}R^{p_2}R^{p_1}=\delta,
\end{equation}
by taking $F\coloneq R^{p_4}R^{p_5}$, where $p_4,p_5\in\PV$ are distinct nonisotropic points 
generating a hyperbolic line. In fact, as remarked in page \pageref{rmk:tance and line}, if the points $p_4,p_5$ define a hyperbolic line, then $\trace F= 4 \tance(p_4,p_5)-1>3$ and $F$ is loxodromic as consequence.

\begin{prop}
\label{prop:existence}
If\/ $p_4,p_5\in\BV$ are distinct points, then there exist\/ $p_1\in\EV$ and
$p_2,p_3\in\BV$ satisfying the relation\/~\eqref{eq:relation} 
with\/~$\delta=\omega^2$.
\end{prop}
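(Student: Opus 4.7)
The plan is to apply the existence result for strongly regular triples developed in this section to the isometry $F \coloneq R^{p_4}R^{p_5}$, with the sign configuration $\pmb\sigma = (+,-,-)$, verifying that the cube root of unity can be taken to be $\omega^2$.

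First, since $p_4,p_5 \in \BV$ are distinct, the projective line $\mathrm L\fgeo{p_4,p_5}$ is hyperbolic, so $\tance(p_4,p_5) > 1$ and $\trace F = 4\tance(p_4,p_5) - 1 > 3$ is real; in particular $\trace F \notin \Delta$ and $F$ is loxodromic. Because $R^{p_4}$ and $R^{p_5}$ are involutions, the target relation $R^{p_5}R^{p_4}R^{p_3}R^{p_2}R^{p_1} = \omega^2$ is equivalent to
$$R^{p_3}R^{p_2}R^{p_1} = \omega^2 F,$$
so it suffices to produce a strongly regular triple $(p_1,p_2,p_3)$ of signs $\pmb\sigma = (+,-,-)$ realizing the specific element $\omega^2 F \in \SU(2,1)$.

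Next, I would verify the hypotheses of the existence criterion (equation~\eqref{eq:surface} together with inequalities~\eqref{eq:inequalities-surface}) with $\pmb\sigma$ and $\tau \coloneq \omega^2 \trace F$. With $\sigma_1\sigma_2\sigma_3 = +1$, the relevant condition reduces to $\tau$ lying to the left of $\ell_{-1}$, i.e., $\real\tau < -1$. Since $\real \omega^2 = -\tfrac12$ and $\trace F$ is real,
$$\real\tau = -\tfrac12\trace F < -\tfrac32 < -1,$$
so the condition holds and the criterion yields a $(\pmb\sigma,\tau)$-triple $(p_1,p_2,p_3)$ with the prescribed signs and $\trace R^{p_3}R^{p_2}R^{p_1} = \tau$.

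Finally, I would upgrade this trace-equality to an equality of elements in $\SU(2,1)$. Since $\partial\Delta$ is invariant under $z \mapsto \omega z$ (visible from its parametrization $\xi^{-2} + 2\xi$), the element $\omega^2 F$ is loxodromic too. Any two loxodromic elements of $\SU(2,1)$ with the same trace share the same characteristic polynomial (using $\det M = 1$ and $\trace M^{-1} = \overline{\trace M}$ for $M \in \SU(2,1)$) and are therefore conjugate. Conjugating the triple $(p_1,p_2,p_3)$ by a suitable $I \in \SU(2,1)$ preserves the signs while upgrading the trace identity to $R^{p_3}R^{p_2}R^{p_1} = \omega^2 F$ exactly, and left-multiplying by $R^{p_5}R^{p_4}$ gives the desired relation. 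The only non-routine step is this last promotion from trace-equality to element-equality via conjugation; the rest is direct computation with the criterion of Subsection~\ref{subsec:relative}.
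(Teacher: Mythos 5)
Your proposal is correct and follows essentially the same route as the paper: choose $\pmb\sigma=(+,-,-)$, observe that $\trace R^{p_4}R^{p_5}=4\tance(p_4,p_5)-1>3$ lies to the right of $\ell_{-1}$ so that $\tau=\omega^2\trace R^{p_4}R^{p_5}$ lies to its left, invoke the existence criterion for $(\pmb\sigma,\tau)$-triples, and then conjugate to turn the trace equality into the exact relation. The only difference is that you spell out the final conjugation step (same trace of loxodromic elements implies conjugacy in $\SU(2,1)$), which the paper leaves implicit in its appeal to the construction of Subsection~\ref{subsec:relative}.
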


\begin{proof}
Take $\sigma_1=+1$, $\sigma_2=\sigma_3=-1$, and $\pmb\sigma=(\sigma_1,\sigma_2,\sigma_3)$. 
The isometry $R^{p_4}R^{p_5}$ is hyperbolic
(i.e., real loxodromic) with $\trace R^{p_4}R^{p_5}=4\tance(p_4,p_5)-1>3$ 
since $\tance(p_4,p_5)>1$. That is, 
$\trace R^{p_4}R^{p_5}$ lies to the right side of $\ell_{-1}$. So $\tau\coloneq \omega^2\trace R^{p_4}R^{p_5}$
lies to the left side of $\ell_{-1}$ and, since $\sigma_1\sigma_2\sigma_3=+1$, there exists a 
strongly regular triple $(p_1',p_2',p_3')$ with respect to $\pmb\sigma,\tau$. Therefore, 
there exists $I\in\SU(2,1)$ such that $IR^{p_3'}R^{p_2'}R^{p_1'}I^{-1}=\omega^2 R^{p_4}R^{p_5}$.
The result follows by defining $p_i=Ip_i'$, $i=1,2,3$.
\end{proof}

\begin{cor}
Let\/ $\Sigma=(+1,-1,-1,-1,-1)$.
\begin{itemize}
\item If\/ $\delta=1$, then\/ $\mathcal P_{\Sigma,\delta}H_5=\varnothing$.
\item If\/ $\delta=\omega$ or\/ $\delta=\omega^2$, then\/ 
$\mathcal P_{\Sigma,\delta}H_5\neq\varnothing$.
\end{itemize}
\end{cor}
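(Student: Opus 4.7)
The plan is to analyze the trace of both sides of the rewritten relation
$$R^{p_3}R^{p_2}R^{p_1} = \delta\, R^{p_4}R^{p_5}.$$
Since $p_4, p_5 \in \BV$ are distinct (by (P1)), we have $\tance(p_4,p_5) > 1$, so $t := 4\tance(p_4,p_5) - 1 > 3$ is real and $\tau := \trace R^{p_3}R^{p_2}R^{p_1} = \delta\, t$. With $\Sigma = (+1,-1,-1,-1,-1)$ we get $\sigma_1\sigma_2\sigma_3 = +1$, so by the analysis of Subsection~\ref{subsec:relative}, a strongly regular $((+,-,-),\tau)$-triple $(p_1,p_2,p_3)$ exists precisely when $\tau \notin \Delta$ and $\real\tau < -1$ (that is, when $\tau$ lies strictly to the left of $\ell_{-1}$).

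The non-existence claim for $\delta = 1$ splits into two subcases. In the strongly regular case (i.e.\ $p_1, p_2, p_3$ linearly independent), $\tau = t > 3$ gives $\real \tau > -1$, contradicting the criterion above. In the linearly dependent case, $p_1, p_2, p_3$ lie on a common complex projective line $L = \mathbb P W$ (pairwise non-orthogonality, from (P1), rules out the other degenerations). Then $R^{p_3}R^{p_2}R^{p_1}$ preserves $L$ and its polar point $L^\perp \in \EV$, hence so does $R^{p_4}R^{p_5}$. The unique axis of the loxodromic $R^{p_5}R^{p_4}$ is the geodesic $\mathrm G\geo{p_4,p_5}$, which must therefore lie in $L$, forcing $p_4, p_5 \in L$. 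But then $p_i \perp L^\perp$ for every $i$, so $R^{p_i}$ acts on $L^\perp$ as $-\Id$, and the left-hand side of (P3) acts as $(-1)^5 = -1 \neq 1$, a contradiction.

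For $\delta = \omega$ and $\delta = \omega^2$, non-emptiness follows by the argument of Proposition~\ref{prop:existence}. That proposition handles $\delta = \omega^2$ directly; the case $\delta = \omega$ is strictly parallel since $\omega\, t$ (for real $t > 3$) also lies outside $\Delta$ and has real part $-t/2 < -1$, so the existence result of Subsection~\ref{subsec:relative} produces a strongly regular $((+,-,-),\omega t)$-triple, which after conjugation by a suitable element of $\SU(2,1)$ yields $R^{p_5}R^{p_4}R^{p_3}R^{p_2}R^{p_1} = \omega$. The remaining conditions of (P1) on cross pairs $(p_i, p_j)$, $i \in \{1,2,3\}$, $j \in \{4,5\}$, are open and fail on a proper algebraic subset, so they are secured by generic choice of $p_4, p_5$ (using Proposition~\ref{prop:dim-rep-su21} to ensure the parameter space is positive-dimensional).

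The main obstacle is the linearly dependent subcase for $\delta = 1$: the strong-regularity machinery of Subsection~\ref{subsec:relative} is silent there, and one needs the separate polar-point parity observation to close it. Everything else is trace bookkeeping combined with direct application of Section~\ref{sec:decom-iso}.
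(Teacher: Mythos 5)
Your treatment of the first item is correct and complete; you handle the linearly dependent subcase with a polar-point parity argument, whereas the paper disposes of both subcases at once by observing that $\sigma_1\sigma_2\sigma_3(2\real\kappa+1)$ equals the Gram determinant $\det[g_{ij}]$, which is $\leq 0$ for any three vectors in a space of signature $-++$, while $\delta=1$ would force it to be positive. Your existence argument for $\delta=\omega,\omega^2$ also begins as the paper's does (the paper obtains $\delta=\omega$ for free by reversing the relation to $R^{p_2}R^{p_3}R^{p_4}R^{p_5}R^{p_1}=\omega$ rather than rerunning the construction, but your direct route through the criterion $\real\tau<-1$, $\tau\notin\Delta$ is equally valid).

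The gap is in your final step: ``the remaining conditions of {\bf (P1)} on cross pairs are open and fail on a proper algebraic subset, so they are secured by generic choice of $p_4,p_5$.'' This is an assertion, not an argument. First, Proposition~\ref{prop:dim-rep-su21} takes as a \emph{hypothesis} the existence of a point with $\tance(p_1,p_5)\neq 0,1$ --- part of what you are trying to establish --- so it cannot be invoked to furnish the parameter space you want to perturb in. Second, once $(p_1,p_2,p_3)$ is fixed, the element $R^{p_5}R^{p_4}$ is a fixed loxodromic isometry and the admissible pairs $(p_4,p_5)$ form only a one-parameter bending family along its axis; properness of the bad locus inside this family is exactly the point at issue, and it can fail a priori: for instance $\langle p_1,B(\theta)p_4\rangle$ vanishes identically in $\theta$ precisely when $p_1$ is the polar point of $\mathrm L\fgeo{p_4,p_5}$, a configuration that must be excluded by a separate argument. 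The paper does not argue by genericity at all: it proves the bad locus is \emph{empty}, showing case by case that each coincidence ($p_4=p_5$, $p_3=p_4$, $p_2=p_4$, $p_1\perp p_4$, and so on) contradicts the relation; the orthogonality case in particular requires \cite[Lemma~4.1]{spell} together with the observation that $R^{p_3'}R^{p_2'}$ would then fix a negative point while being loxodromic. These verifications are the substantive half of the paper's proof of the second item and are missing from yours.
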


\begin{proof}
The first item follows from the fact that if $R^{p_5}R^{p_4}R^{p_3}R^{p_2}R^{p_1}=1$,
then $\trace R^{p_3}R^{p_2}R^{p_1}=\trace R^{p_4}R^{p_5}$ which is a real positive
number since, by hypothesis, $\sigma p_4=\sigma p_5=-1$ and, in this case,
$\sigma_1\sigma_2\sigma_3(2\real\kappa+1)>0$, so the configuration is impossible.

Now suppose that $\delta=\omega^2$. From Proposition~\ref{prop:existence},
there are $p_1,p_2,p_3,p_4,p_5$ such that $R^{p_5}R^{p_4}R^{p_3}R^{p_2}R^{p_1} = \omega^2$ 
with $p_1$ positive and the other points negative. What remains to be proved is that
these points satisfy condition {\bf (P1--P3)} of Subsection~\ref{subsec:pu21-rep}.
By construction $\tance(p_1,p_2),\tance(p_1,p_3),\tance(p_2,p_3) \neq 0,1$ because 
$(p_1,p_2,p_3)$ is a regular triple. If $p_4=p_5$, then $R^{p_3}R^{p_2}R^{p_1}=\omega^2$
which implies that $p_1,p_2,p_3$ are pairwise orthogonal (see \cite[Section~3]{spell}), a contradiction.
A similar contradiction is obtained if we suppose $p_3=p_4$. If $p_2=p_4$,
the relation assumes the form $R^{p_5}R^{p_2}R^{p_3}R^{p_2}R^{p_1}=\omega^2$
implying $R^{p_5'}R^{p_3}R^{p_1'}=\omega^2$, where $p_5'\coloneq R^{p_2}p_5$ and $p_1'=R^{p_2}p_1$,
and it follows that $p_3$ and $p_5'$ are orthogonal, which is a contradiction since
$\sigma p_3=\sigma p_5'=-1$. The arguments for why we cannot have $p_2=p_5$ or 
$p_3=p_5$ are analogous.

For the nonorthogonality, if $p_4$ is orthogonal to $p_1$, then $R^{p_4}=R^cR^{p_1}$
where $c$ is the point orthogonal to both $p_1$ and $p_4$ (seee \cite[Section~3]{spell}). So, 
$R^{p_5}R^cR^{p_3'}R^{p_2'}=\omega^2$ where $p_3'\coloneq R^{p_1}p_3$ and $p_2'\coloneq R^{p_1}p_2$.
By~\cite[Lemma~4.1]{spell}, the complex lines $\mathrm L(p_2',p_3')$ and
$\mathrm L(c,p_5)$ are orthogonal. Being both hyperbolic lines, it follows
that $R^{p_3'}R^{p_2'}$ fixes their intersection, which is a negative point. This 
is a contradiction, since $\sigma p_2'=\sigma p_3'=-1$ and thus
$R^{p_3'}R^{p_2'}$ is loxodromic.

Finally, if $\delta=\omega$, we just notice that if
$R^{p_5}R^{p_4}R^{p_3}R^{p_2}R^{p_1}=\omega^2$, then 
$R^{p_2}R^{p_3}R^{p_4}R^{p_5}R^{p_1}=\omega$ and this relation
respects the list of signs~$\Sigma$.
\end{proof}

The previous proof shows us that any relation~\eqref{eq:relation} with $\delta=\omega$ or 
$\delta=\omega^2$ and where the signs of points are given by $\Sigma=(+1,-1,-1,-1,-1)$,
produces a representation in~$\mathcal P_{\Sigma,\delta}H_5$. Moreover, by 
Corollary~\ref{cor:dim-pdelta}, if $\Sigma = (+1,-1,-1,-1,-1)$ and 
$\delta\neq 1$, then $\dim\mathcal P_{\Sigma,\delta}H_5/\PU(2,1)=4$.

\begin{prop}[Theorem~7.3~\cite{spell}] 
\label{prop:bending-connected}
Suppose\/ $n=5$. If at most one of the signs in\/ $\Sigma$ is\/ $+1$ and\/ 
$\mathcal P_{\Sigma,\delta}H_5$ is not empty, then it is bending-connected. \
That is, given two representations $\varrho_1,\varrho_2\in \mathcal P_{\Sigma,\delta}H_5$,
we have that $\varrho_2$ is obtained by a finite amount of bending-deformations
of~$\varrho_1$.
\end{prop}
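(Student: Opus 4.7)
The plan is to reduce bending-connectedness of $\mathcal{P}_{\Sigma,\delta}H_5$ to the bending dynamics on the surface $\mathcal{S}_{\pmb\sigma',\tau}$ of Subsection~\ref{subsec:relative}, where $\pmb\sigma'\coloneq(\sigma_1,\sigma_2,\sigma_3)$. For a representation $\varrho=(p_1,\ldots,p_5)\in\mathcal{P}_{\Sigma,\delta}H_5$, the defining relation rearranges as
\[
R^{p_3}R^{p_2}R^{p_1}=\delta\,R^{p_4}R^{p_5},
\]
so $\tau\coloneq\trace R^{p_3}R^{p_2}R^{p_1}$ is determined by $\delta$ and $\tance(p_4,p_5)$. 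Hence, modulo $\PU(2,1)$, the triple $(p_1,p_2,p_3)$ is classified by a point on $\mathcal{S}_{\pmb\sigma',\tau}$, while the pair $(p_4,p_5)$ is classified by $\tance(p_4,p_5)$ alone.

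Given two representations $\varrho_1,\varrho_2\in\mathcal{P}_{\Sigma,\delta}H_5$, I would proceed in three stages. First, bendings at $(p_3,p_4)$ preserve $R^{p_4}R^{p_3}$ while moving $p_4$ along the geodesic $\mathrm G\fgeo{p_3,p_4}$, which continuously varies $\tance(p_4,p_5)$; combined, if necessary, with bendings at $(p_2,p_3)$ to extend the achievable range, one can match $\tance(p_4,p_5)$ in $\varrho_1$ with $\tance(p_4',p_5')$ in $\varrho_2$, thereby aligning the traces $\tau$. Second, with both triples then on the same surface $\mathcal{S}_{\pmb\sigma',\tau}$, bendings at $(p_1,p_2)$ and $(p_2,p_3)$ produce motion along vertical and horizontal lines of this surface (Subsection~\ref{subsec:bend-triples}), so a finite sequence of such bendings moves one triple onto the other. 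Third, once the triples agree modulo $\PU(2,1)$, the products $R^{p_5}R^{p_4}$ and $R^{p_5'}R^{p_4'}$ must coincide, since both equal $\delta\,R^{p_1}R^{p_2}R^{p_3}$, and the residual ambiguity between the two pairs is absorbed by a final bending at $(p_4,p_5)$, which by \cite[Theorem~4.7]{spell} acts transitively on pairs yielding the same product.

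The main obstacle is the global connectedness of the relevant component of $\mathcal{S}_{\pmb\sigma',\tau}$ under horizontal and vertical slicing: as noted in Subsection~\ref{subsec:bend-triples} and illustrated in Figure~\ref{fig:surf-sigma-tau}, the slices can intersect non-uniquely, so a naive sequence of bendings need not terminate at the desired point, and one must verify that the inequalities~\eqref{eq:inequalities-surface} cut out a single bending-orbit under the hypothesis that $\Sigma$ has at most one $+1$ sign. This combinatorial analysis is the heart of \cite[Theorem~7.3]{spell}, which I would invoke to complete the argument.
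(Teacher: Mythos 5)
The paper offers no internal proof of this proposition: as the bracketed attribution in its statement indicates, it is imported wholesale as Theorem~7.3 of \cite{spell}. So there is no argument in the paper to compare yours against; what can be said is that your reduction is broadly consistent with the framework the paper sets up (bendings acting on $\mathcal S_{\pmb\sigma,\tau}$ by horizontal and vertical motion, Subsections~\ref{subsec:relative} and~\ref{subsec:bend-triples}), but that it does not amount to a proof. The decisive problem is your last sentence: you invoke \cite[Theorem~7.3]{spell} to supply the global connectedness of the bending orbit on $\mathcal S_{\pmb\sigma',\tau}$, but that theorem \emph{is} the statement being proven. Once the essential step is conceded to the cited theorem, the surrounding three-stage scaffolding reduces the proposition to itself; if the intent is merely to cite the result, the bare citation (as the paper does) already suffices.

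The scaffolding also contains an unjustified step that is not cosmetic. In stage one you claim that bendings of $R^{p_4}R^{p_3}$, possibly combined with bendings of $R^{p_3}R^{p_2}$, let you match $\tance(p_4,p_5)$ between the two representations and hence align the traces $\tau$. A single one-parameter bending of $R^{p_4}R^{p_3}$ sweeps $\tance(p_4,p_5)$ only through some interval: it tends to $+\infty$ as $p_4$ degenerates to an ideal endpoint of $\mathrm G\fgeo{p_3,p_4}$, but its infimum along the orbit need not be $1$, so an arbitrary target value $>1$ is not obviously reachable, and the phrase about extending the achievable range with further bendings is precisely the kind of global reachability claim the cited theorem exists to establish. (By contrast, your stage three is sound: two pairs with the same product $R^{p_5}R^{p_4}=R^{p_5'}R^{p_4'}$ and the same signs differ by a bending, which is exactly \cite[Theorem~4.7]{spell} as quoted in Subsection~\ref{subsec:bendings}.) In short, either cite the result and stop, or supply the connectedness analysis of $\mathcal S_{\pmb\sigma',\tau}$ under horizontal and vertical slicing; the proposal as written does neither.
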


\begin{rmk}
Given a relation~\eqref{eq:relation} associated with a representation
in $\mathcal P_{\Sigma,\delta}H_5$ where $\Sigma$ has at most one positive
sign, we notice that bending the triple $(p_1,p_2,p_3)$ into a geometrically equal
triple $(p_1',p_2',p_3')$ is equivalent,
module $\PU(2,1)$, to bending $R^{p_5}R^{p_4}$. In fact, if $I\in\SU(2,1)$ is the isometry
that sends $(p_1,p_2,p_3)$ to $(p_1',p_2',p_3')$, then since $(p_1',p_2',p_3')$ is
a bending deformation of $(p_1,p_2,p_3)$, it follows that
$$R^{p_5}R^{p_4}R^{p_3}R^{p_2}R^{p_1}=R^{p_5}R^{p_4}R^{p_3'}R^{p_2'}R^{p_1'}
= R^{p_5}R^{p_4}I R^{p_3}R^{p_2}R^{p_1} I^{-1}=\delta.$$
This implies that $I^{-1}R^{p_5}R^{p_4}IR^{p_3}R^{p_2}R^{p_1}=\delta$ and, since
$I$ is in the centralizer of $R^{p_5}R^{p_4}$, the starting relation is equal modulo
$\PU(2,1)$ to the relation $R^{I^{-1}p_5}R^{I^{-1}p_4}R^{p_3}R^{p_2}R^{p_1}=\delta$
which is obtained from the starting one by a bending of $R^{p_5}R^{p_4}$.
\end{rmk}

\section{Discreteness}
\label{sec:discreteness}

In this section, given a length~$5$ relation between orientation-preserving involutions of~$\HC$ 
of the form
\begin{equation}
\label{eq:relation-omega2}
R^{p_5}R^{p_4}R^{p_3}R^{p_2}R^{p_1}=\omega^2,\quad p_1\in\EV,\quad p_2,p_3,p_4,p_5\in\HC,
\end{equation}
whose existence is guaranteed 
by~Proposition~\ref{prop:existence}, we discuss the discreteness of 
$\langle R^{p_2},R^{p_3},R^{p_4},R^{p_5}\rangle$ or, equivalently, the 
discreteness and faithfulness of the representation $\varrho:H_5\to\PU(2,1)$ defined by 
$\varrho(r_i)=R^{p_i}$. We introduce a {\it quadrangle of bisectors\/} as a 
candidate for the fundamental domain of such group and, following \cite{yet} and \cite{AGG2011}, 
describe conditions over such quadrangle so that the group is discrete. Since all these conditions
are open, that is, given by strict inequalities, if we find one such relation that produces a discrete
subgroup of~$\PU(2,1)$, any small bending-deformation (as in Subsection~\ref{subsec:bendings}) of the relation will also produce a
discrete subgroup.

The existence of a relation producing a discrete subgroup is obtained computationally in 
Section~\ref{sec:computational}.

\subsection{Quadrangle of bisectors}
\label{subsec:quadrangle}
For points $p_i\in\PV$ satisfying~\eqref{eq:relation-omega2},
denote $R_i\coloneq R^{p_i}$ and define the  complex geodesics 
$C_1\coloneq \mathbb{P}p_1^\perp\cap\HC$,  $C_2\coloneq R_2C_1$, $C_3\coloneq R_3C_2$, and $C_4\coloneq R_4C_3$. 
We impose four conditions~{\bf (Q1--Q4)} over these points to be defined bellow.

\begin{enumerate}
\item[{\bf (Q1)}] \underline{The complex geodesics $C_i$ are pairwise ultraparallel.}
\end{enumerate}

\begin{figure}[htb!]
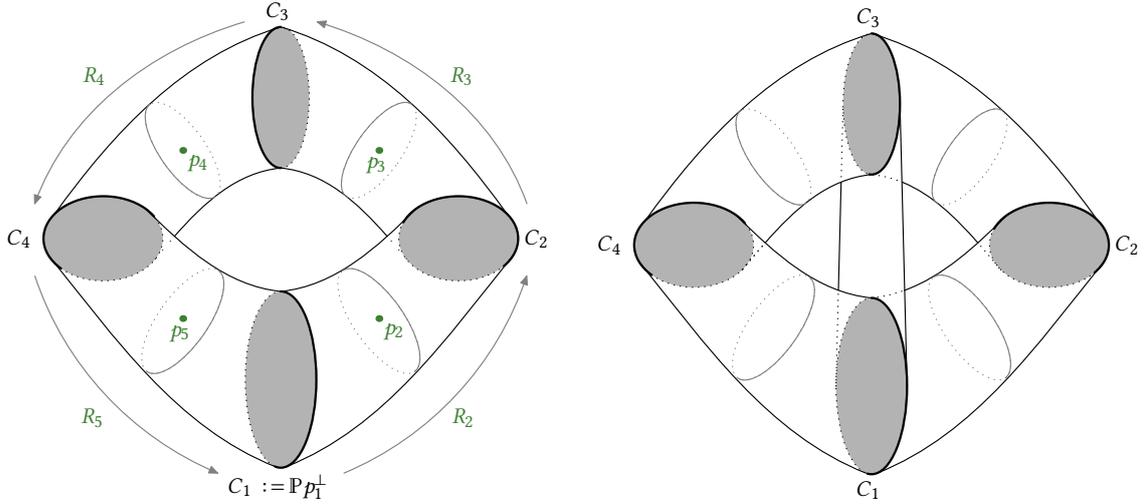

\centering
\includegraphics[scale=.85]{figs/defi-quad-1.mps}
\hspace{.4cm}
\includegraphics[scale=.85]{figs/triang-bisec.mps}
\caption{The construction of the quadrangle of bisectors and its decomposition
as two transversally adjacent triangles}
\label{fig:triang-bisec}
\end{figure}

If the condition {\bf(Q1)} is satisfied, we can consider the adjacent 
triangles of bisectors $\triangle(C_1,C_2,C_3)$ and $\triangle(C_1,C_3,C_4)$ (see Subsection~\ref{subsec:triangle}). 
\begin{enumerate}
\item[{\bf (Q2)}] \underline{The triangles of bisectors $\triangle(C_1,C_2,C_3)$ and
$\triangle(C_1,C_3,C_4)$ are transversal and counterclockwise}\\  
\underline{oriented.}
\end{enumerate}

As seen in Subsection~\ref{subsec:bisectors}, an oriented bisector~$B$ 
divides~$\CHC$ in two half-spaces. Following \cite{AGG2011},
we denote by $K^+$ the half-space lying on the side of the normal vector to~$B$,
while the other is denoted~$K^-$.

Given bisector segments $\mathrm B[S,S_1]$ and $\mathrm B[S,S_2]$ that are transversal along the complex
geodesic~$S$, consider the oriented bisectors $B_1\coloneq \mathrm B\geo{S,S_1}$ and 
$B_2\coloneq \mathrm B\geo{S,S_2}$. For $i=1,2$, denote by $K_i^+$ and $K_i^-$ the half-spaces 
defined by $B_i$, as above. We define the {\it sector\/} from
$\mathrm B[S,S_1]$ to $\mathrm B[S,S_2]$
as $K_1^+\cap K_2^-$ if the angle from $\mathrm B[S,S_1]$ to $\mathrm B[S,S_2]$ is smaller than~$\pi$
at every point~$c\in S$ (or, equivalently, at some point~$c\in S$);
otherwise, we define the sector as $K_1^+\cup K_2^-$.

\smallskip

Now, supposing that conditions {\bf (Q1)} and {\bf (Q2)} are satisfied
we have two transversal and counterclockwise oriented triangles $\triangle(C_1,C_2,C_3)$ and 
$\triangle(C_1,C_3,C_4)$. We say that these triangles are {\it transversally adjacent\/} if
$\mathrm B[C_2,C_3]$ and $\mathrm B[C_3,C_4]$ 
are transversal over~$C_3$, $\mathrm B[C_2,C_1]$ and $\mathrm B[C_1,C_4]$ are transversal
over~$C_1$, and $C_4$ lies in the {\it sector\/} from
$\mathrm B[C_1,C_2]$ to $\mathrm B[C_2,C_3]$. This will imply that the region delimited by 
the quadrangle~$\mathcal Q$ is contained in the sector from $\mathrm B[C_1,C_2]$
to $\mathrm B[C_2,C_3]$ and is also contained in the sector from 
$\mathrm B[C_3,C_4]$ to $\mathrm B[C_4,C_1]$.

\begin{enumerate}
\item[{\bf (Q3)}] \underline{The triangles $\triangle(C_1,C_2,C_3)$ 
and $\triangle(C_1,C_3,C_4)$ are transversally adjacent.}
\end{enumerate}

As in~\cite{turnover}, if the above conditions are satisfied, we can consider the 
{\it quadrangle of bisectors}
$$\mathcal Q\coloneq \mathrm B[C_1,C_2]\cup\mathrm B[C_2,C_3]
\cup\mathrm B[C_3,C_4]\cup \mathrm B[C_4,C_1].$$
In accordance with~\cite{yet}, $\mathcal Q$ is a {\it cornerless polyhedron}:
the faces (codimension~$1$ pieces) are the bisector segments 
$B[C_{i},C_{i+1}]$ and the edges (codimension~$2$ pieces) are the complex
geodesics $C_i$. Each edge is contained in exactly two faces.
The intersection of two faces is either empty or an edge. For~$\mathcal Q$
to be a cornerless polyhedron, {\it strong simplicity\/} (that is,
metric separability of nonintersecting edges and faces) is also required, and
it follows from the simple fact that nonintersecting faces also do not intersect
in $\CHC$ (see also~\cite[Section~2.2.2]{AGG2011}).

The face-pairing is provided by the reflections $R_i$, $i=2,3,4,5$,
and it identifies the sides as in Figure~\ref{fig:defi-quad-1}. There is one
{\it geometric cycle\/} of vertices, the cycle of $C_1$, which has 
length~$8$ and is given by the relation $R_5R_4R_3R_2R_5R_4R_3R_2=\omega^2$.


\begin{figure}[htb!]
\centering
\includegraphics[scale=.85]{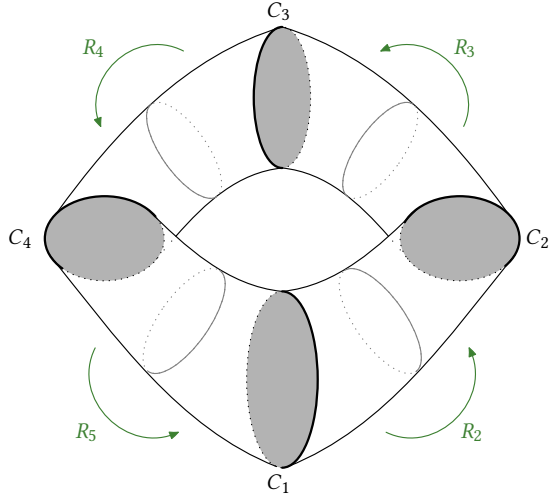}
\caption{The face-pairing of the quadrangle of bisectors}
\label{fig:defi-quad-1}
\end{figure}

Given $x_1\in C_1$, denote $x_2\coloneq R_2x_1$, $x_3\coloneq R_3x_2$, and $x_4\coloneq R_4x_3$.
So, for $i=1,2,3,4$, $x_i\in C_i$. The {\it sum of interior angles of\/~$\mathcal Q$
at\/~$x_1$} is the sum of the angle from
$\mathrm B[C_4,C_1]$ to $\mathrm B[C_1,C_2]$ at~$x_1$, the angle from
$\mathrm B[C_1,C_2]$ to $\mathrm B[C_2,C_3]$ at~$x_2$, the angle from
$\mathrm B[C_2,C_3]$ to $\mathrm B[C_3,C_4]$ at~$x_3$, and the angle from
$\mathrm B[C_3,C_4]$ to $\mathrm B[C_4,C_1]$ at~$x_4$.

\begin{enumerate} 
\item[{\bf(Q4)}] \underline{There exists $x_1\in C_1$ such that the sum of interior
angles of~$\mathcal Q$ at~$x_1$ is~$\pi$.}
\end{enumerate}

\begin{lemma}
\label{lemma:intangles}
If conditions\/~{\bf(Q1--Q3)} are satisfied, then for
any point $x_1\in C_1$, the sum of interior angles of\/~$\mathcal Q$ at~$x_1$
is equal to\/~$\pi\ \mathrm{mod}\ 2\pi$.
\end{lemma}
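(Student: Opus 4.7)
The plan is to relate the sum of interior angles to the action of the cycle transformation $R_5R_4R_3R_2$ on the complex normal line $N_{x_1}C_1$, and then to compute this action in two independent ways.

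First I compute the holonomy abstractly. The identity $R_5R_4R_3R_2R_1=\omega^2$ in $\SU(2,1)$ descends to $R_5R_4R_3R_2=R_1^{-1}=R_1$ in $\PU(2,1)$, since $\omega^2$ is central. At $x_1$, the tangent space splits as $T_{x_1}\HC=T_{x_1}C_1\oplus N_{x_1}C_1$, and a direct computation from the formula $R^{p_1}(x)=2\tfrac{\langle x,p_1\rangle}{\langle p_1,p_1\rangle}p_1-x$ (using $\langle x_1,p_1\rangle=0$, as in the proof of Lemma~\ref{lemma:diffr}) shows that $dR_1$ is the identity on $T_{x_1}C_1$ and the scalar $-1$ on $N_{x_1}C_1$. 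Hence $d(R_5R_4R_3R_2)$ acts on $N_{x_1}C_1$ as multiplication by $-1$.

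Next I will express the same holonomy in terms of the $\alpha_i$. By (Q1)--(Q3) each pair of adjacent bisector faces at a vertex $x_i\in C_i$ (where $x_{i+1}\coloneq R_{i+1}x_i$, indices mod~$4$) meets transversally along $C_i$, so each face contributes a unique real tangent line inside $N_{x_i}C_i$. Let $v_i^{\mathrm{in}}, v_i^{\mathrm{out}}\in N_{x_i}C_i$ be unit vectors along $B[C_{i-1},C_i]$ and $B[C_i,C_{i+1}]$ respectively, both pointing into their bisector segments. Identifying $N_{x_i}C_i\simeq\CC$ via the complex structure, the interior angle is characterized by $v_i^{\mathrm{in}}=e^{i\alpha_i}v_i^{\mathrm{out}}$. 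The essential geometric input is that $R_{i+1}$ swaps the two slices $C_i\leftrightarrow C_{i+1}$ of the bisector $B[C_i,C_{i+1}]$, hence preserves this bisector segment setwise, sends $x_i$ to $x_{i+1}$, and carries the direction pointing from $C_i$ into the segment to the direction pointing from $C_{i+1}$ into it; that is, $dR_{i+1}(v_i^{\mathrm{out}})=v_{i+1}^{\mathrm{in}}$. Since $dR_{i+1}\colon N_{x_i}C_i\to N_{x_{i+1}}C_{i+1}$ is $\CC$-linear it is multiplication by a single complex scalar, so $dR_{i+1}(v_i^{\mathrm{in}})=e^{i\alpha_i}v_{i+1}^{\mathrm{in}}$.

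Iterating these four identities and using $v_5^{\mathrm{in}}=v_1^{\mathrm{in}}$ (since $x_5=x_1$ and $C_5=C_1$ inside $\PU(2,1)$) yields
$$d(R_5R_4R_3R_2)(v_1^{\mathrm{in}})=e^{i(\alpha_1+\alpha_2+\alpha_3+\alpha_4)}v_1^{\mathrm{in}}.$$
Comparing with the first computation gives $e^{i(\alpha_1+\alpha_2+\alpha_3+\alpha_4)}=-1$, i.e., $\alpha_1+\alpha_2+\alpha_3+\alpha_4\equiv\pi\pmod{2\pi}$. Since the argument is performed at an arbitrary $x_1\in C_1$, the conclusion holds for every such $x_1$.

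The main subtlety is the sign in the identity $dR_{i+1}(v_i^{\mathrm{out}})=v_{i+1}^{\mathrm{in}}$ (rather than $-v_{i+1}^{\mathrm{in}}$). This is not a calculation but a topological observation: $R_{i+1}$ is a diffeomorphism of the connected bisector segment $B[C_i,C_{i+1}]$ which interchanges its two boundary slices, so it must send the unique inward-pointing normal direction at one boundary slice to the unique inward-pointing normal direction at the other. Everything else is then forced by $\CC$-linearity of the differentials.
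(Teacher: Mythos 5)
Your proof is correct, and it takes a genuinely different route from the paper's. The paper argues analytically: it writes each interior angle $\theta_k$ as the argument of an explicit product of Hermitian pairings of the polar points $q_j$ and the vertices $x_j$ (using the normal vector field \eqref{eq:bisec-normal} along each bisector), multiplies the four expressions, observes that almost everything cancels because $\langle x_k,q_{k+1}\rangle=\langle x_{k+1},q_k\rangle$ and $\langle q_k,q_{k+1}\rangle\in\RR_{<0}$, and is left with $\arg\bigl(-\omega^4|\cdot|^2\bigr)-\arg\bigl(\omega^4|\cdot|^2\bigr)=\pi$, the decisive minus sign coming from $x_4=-\omega^2R_5x_1$ versus $q_4=+\omega^2R_5q_1$. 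Your argument packages exactly that sign as $dR_1|_{N_{x_1}C_1}=-\Id$ (from $R_1x_1=-x_1$, $R_1p_1=p_1$) and replaces the telescoping product by the composition of the $\CC$-linear maps $dR_{i+1}\colon N_{x_i}C_i\to N_{x_{i+1}}C_{i+1}$; the cancellation in the paper is the analytic shadow of your holonomy composition. Your version is coordinate-free and needs no formula for the normal field of a bisector; the paper's version produces the explicit expressions \eqref{eq:angletheta1}--\eqref{eq:anglestheta} for the individual angles, which are then reused in Corollary~\ref{cor: angles_sum_pi} to decide whether the sum is $\pi$ or $3\pi$ --- your argument alone does not yield those.

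One point deserves a sentence in your write-up. Your $\alpha_i$ is the rotation angle from $v_i^{\mathrm{out}}$ to $v_i^{\mathrm{in}}$ in the complex line $N_{x_i}C_i$, so it equals the interior angle only up to a sign $\epsilon_i=\pm1$ modulo $2\pi$, determined by which side of the faces the interior of $\mathcal Q$ lies on relative to the complex orientation of $N_{x_i}C_i$. The conclusion $\sum\theta_i\equiv\pi\pmod{2\pi}$ survives a global sign flip but not mixed signs, so you need all $\epsilon_i$ equal; this is exactly what the counterclockwise orientation in {\bf(Q2)} together with the transversal adjacency in {\bf(Q3)} guarantees, and it is the same point at which the paper asserts that each $\theta_i$, with its chosen convention, lies in $(0,\pi)$. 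With that made explicit, the argument is complete: $R_{i+1}$ swaps the slices $C_i$ and $C_{i+1}$ of the canonical bisector through them, hence preserves the face $\mathrm B[C_i,C_{i+1}]$ and carries inward normal to inward normal, and the rest is forced by $\CC$-linearity.
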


\begin{proof} 
As we observed in the Subsection~\ref{subsec:bisectors}, if~$x$ is a point in the extended 
oriented bisector $\mathrm B\geo{C_1,C_2}$, then the vector
\begin{equation}
\label{eq:normalbisec}
n(x,q_1,q_2) = -i \frac{\langle  - ,x \rangle}{\langle x,x \rangle}\bigg( \frac{\langle x, q_2 \rangle}{ \langle q_1,q_2 \rangle}q_1 -   \frac{\langle x, q_1 \rangle}{ \langle q_2,q_1 \rangle}q_2\bigg)
\end{equation}
is normal to~$\mathrm B\geo{C_1,C_2}$ at~$x$ (see Equation \eqref{eq:bisec-normal}).

Now, consider $x_1 \in C_1$, with $\langle x_1,x_1 \rangle = -1$,
and define $x_2,x_3,x_4$ by the formula $x_{k+1} \coloneq R_{k+1} x_k$. Note that 
$x_k\in C_k$. By~\eqref{eq:normalbisec}, the vectors 
$$n_1 = \langle -, x_1\rangle i \frac{\langle x_1,q_2 \rangle}{\langle q_1,q_2 \rangle}q_1 \quad \text{and} \quad n_1' = \langle -, x_1\rangle i \frac{\langle x_1,q_4 \rangle}{\langle q_1,q_4 \rangle}q_4$$
are, respectively, normal to the oriented 
bisector~$\mathrm B[C_1,C_2]$ at~$x_1$ and to the oriented 
bisector~$\mathrm B[C_1,C_4]$ at~$x_1$. If~$\theta_1$ is the inner angle at~$x_1$ between the bisectors, then 
\begin{equation}
\label{eq:angletheta1}
\theta_1 = \arg\langle n_1',n_1 \rangle = 
\arg\frac{\langle q_2,x_1 \rangle\langle x_1,q_4 \rangle}{\langle q_2,q_1\rangle\langle q_1,q_4\rangle},
\end{equation}
where $\arg:\CC \setminus \RR_{\leq 0} \to (-\pi,\pi)$ is given by $\arg\left(re^{i\theta}\right)=\theta$ for $r>0$ and $-\pi<\theta<\pi$. The fact that the quadrangle has transversal sides and counterclockwise orientation implies that the angle between two sides lies on $(0,\pi)$.

In the same way, if~$\theta_k$ is the angle at~$x_k$ between the bisector meeting 
at~$C_k$, then we have
\begin{equation}
\label{eq:anglestheta}
\begin{gathered}
\theta_2 = \arg\frac{\langle q_3,x_2 \rangle\langle x_2,q_1 \rangle}{\langle q_3,q_2\rangle\langle q_2,q_1\rangle},\quad 
\theta_3 = \arg\frac{\langle q_4,x_3 \rangle\langle x_3,q_2 \rangle}{\langle q_4,q_3\rangle\langle q_3,q_2\rangle},\\
\theta_4 = \arg\frac{\langle q_1,x_4 \rangle\langle x_4,q_3 \rangle}{\langle q_1,q_4\rangle\langle q_4,q_3\rangle}.
\end{gathered}
\end{equation}

For $k=1,2,3$, we have that $\langle q_k,q_{k+1} \rangle = -1 + 2\tance(q_k,p_{k+1})$ is a negative real number and $\langle x_k,q_{k+1} \rangle = \langle x_{k+1},q_{k} \rangle$. Additionally, 
\begin{align*}
    \langle q_1,q_4 \rangle &= \omega^2 (-1+2\tance(q_1,R_5q_1)),
    \\ \langle q_1,x_4 \rangle &= -\omega^{-2} \langle q_1,R_5x_1 \rangle,
    \\ \langle q_4,x_1 \rangle &= \omega^2 \langle q_1,R_5x_1 \rangle,
\end{align*}
 since $$x_4 = R_4R_3R_2x_1 = -R_4R_3R_2R_1 x_1 = -\omega^2 R_5 x_1$$
 and
 $$q_4 = R_4R_3R_2q_1 = R_4R_3R_2R_1 q_1 = \omega^2 R_5 q_1,$$
where we used $R_1 x_1 = -x_1$ and 
$R_1q_1 = q_1$.
Thus,
\begin{align*}
\theta_1+\theta_2+\theta_3+\theta_4 &= \arg
\frac{\langle x_1,q_2 \rangle\langle q_4,x_1 \rangle\langle x_2,q_3 \rangle
\langle q_1,x_2 \rangle\langle x_3,q_4 \rangle\langle q_2,x_3 \rangle\langle x_4,q_1 \rangle\langle q_3,x_4 \rangle}
{\langle q_1,q_2\rangle\langle q_4,q_1\rangle\langle q_2,q_3\rangle\langle q_1,q_2\rangle\langle q_3,q_4\rangle\langle q_2,q_3\rangle\langle q_4,q_1\rangle\langle q_3,q_4\rangle}\\
&=\arg\big( \langle q_4,x_1 \rangle \langle x_4,q_1 \rangle \big) - \arg \big(\langle q_4,q_1 \rangle^2 \big)\\
&= \arg\big(-\omega^4 |\langle q_1,R_5x_1 \rangle|^2\big)- 
\arg\big(\omega^4 |\langle q_1,R_5x_1 \rangle|^2\big) \\ 
&= \pi \mod 2\pi.
\end{align*}
\end{proof}

\begin{cor}
\label{cor: angles_sum_pi}
The sum of internal angles of\/~$\mathcal Q$
at\/~$x_1\in C_1$ is exactly\/~$\pi$ if
and only if \/~$\imag\langle x_1,q_2 \rangle\langle q_3,x_4\rangle>0$.
That is, assuming that the conditions \/~{\bf (Q1--Q3)} are satisfied, \/~{\bf(Q4)} holds
if, and only if,\/~$\imag\langle x_1,q_2 \rangle\langle q_3,x_4\rangle>0$.
\end{cor}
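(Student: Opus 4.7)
The plan is to build directly on Lemma~\ref{lemma:intangles}, which already shows $\theta_1+\theta_2+\theta_3+\theta_4\equiv\pi\pmod{2\pi}$. Since conditions {\bf(Q1--Q3)} force every individual angle $\theta_k$ to lie in $(0,\pi)$ (as already noted in the proof of Lemma~\ref{lemma:intangles}), the total lies in $(0,4\pi)$; combined with the congruence modulo $2\pi$, it must equal either $\pi$ or $3\pi$. The only real content of the corollary is thus to produce a sign-test that distinguishes these two values.

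The key observation is to consider the partial sum $\theta_1+\theta_4$. When the total equals $\pi$ we have $\theta_1+\theta_4=\pi-(\theta_2+\theta_3)\in(0,\pi)$, whereas when the total equals $3\pi$ we have $\theta_1+\theta_4=3\pi-(\theta_2+\theta_3)\in(\pi,2\pi)$. Hence the total is $\pi$ if and only if $\sin(\theta_1+\theta_4)>0$. Letting $z_1,z_4$ denote the complex numbers from \eqref{eq:angletheta1} and \eqref{eq:anglestheta} whose arguments give $\theta_1,\theta_4$, this is equivalent to $\imag(z_1z_4)>0$.

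It remains to identify this sign with $\imag\langle x_1,q_2\rangle\langle q_3,x_4\rangle$. I will compute $z_1z_4$ using identities already appearing in the proof of Lemma~\ref{lemma:intangles}: from $q_4=\omega^2R_5q_1$ and $x_4=-\omega^2R_5x_1$ one obtains $\langle x_1,q_4\rangle\langle q_1,x_4\rangle=-\omega^2|\langle q_1,R_5x_1\rangle|^2$, while $\langle q_1,R_5q_1\rangle\in\RR$ gives $\langle q_1,q_4\rangle^2=\omega^2 r_0^2$ for some $r_0\in\RR$. The two factors of $\omega^2$ cancel between numerator and denominator of $z_1z_4$, and the remaining scalar factors $\langle q_2,q_1\rangle$ and $\langle q_4,q_3\rangle$ are both negative reals. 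Thus $z_1z_4$ equals $\langle q_2,x_1\rangle\langle x_4,q_3\rangle$ times an explicit negative real. Since $\langle x_1,q_2\rangle\langle q_3,x_4\rangle=\overline{\langle q_2,x_1\rangle\langle x_4,q_3\rangle}$, the sign flip from the negative-real factor combined with the flip from complex conjugation yields $\imag(z_1z_4)>0$ iff $\imag\langle x_1,q_2\rangle\langle q_3,x_4\rangle>0$, completing the proof. The principal difficulty lies in the careful bookkeeping of the cube roots of unity carried by $q_4$ and $x_4$, so that they cancel exactly against those in $\langle q_1,q_4\rangle^2$; once this cancellation is in place, every remaining piece is either a positive-real squared norm or a negative-real Hermitian product between polar points of ultraparallel slices, and the imaginary sign follows at once.
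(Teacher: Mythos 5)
Your proof is correct, and it follows the same skeleton as the paper's: invoke Lemma~\ref{lemma:intangles} to reduce to the dichotomy $\sum\theta_k\in\{\pi,3\pi\}$, then decide between the two values by the sign of the imaginary part of a product of two of the complex numbers whose arguments give the $\theta_k$. The only genuine difference is the choice of complementary pair: the paper tests $\theta_2+\theta_3$, you test $\theta_1+\theta_4$. The paper's choice is the cheaper one, since for $(\theta_2,\theta_3)$ the denominator $\langle q_3,q_2\rangle\langle q_2,q_1\rangle\langle q_4,q_3\rangle\langle q_3,q_2\rangle$ is already a positive real and the numerator telescopes via $\langle x_k,q_{k+1}\rangle=\langle x_{k+1},q_k\rangle$ to $|\langle q_3,x_2\rangle|^2\,\langle x_1,q_2\rangle\langle q_3,x_4\rangle$, so no cube roots of unity ever appear. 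Your choice forces the $\omega$-bookkeeping, and everything hinges on getting $\langle q_1,q_4\rangle^2=\omega^2r_0^2$ rather than $\omega r_0^2$. That value is correct under the paper's convention (the form is conjugate-linear in the second slot, so $\langle q_1,\omega^2R_5q_1\rangle=\overline{\omega^2}\langle q_1,R_5q_1\rangle=\omega r_0$, whose square is $\omega^2r_0^2$) --- but be aware that the displayed identity in the proof of Lemma~\ref{lemma:intangles} reads $\langle q_1,q_4\rangle=\omega^2(-1+2\tance(q_1,R_5q_1))$, carrying the unconjugated phase; quoting it literally would give $\langle q_1,q_4\rangle^2=\omega r_0^2$ and the cancellation would fail by a factor of $\omega$. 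Since you state the correct value, the conclusion $z_1z_4=(\text{negative real})\cdot\langle q_2,x_1\rangle\langle x_4,q_3\rangle$, and hence the equivalence of $\imag(z_1z_4)>0$ with $\imag\langle x_1,q_2\rangle\langle q_3,x_4\rangle>0$, is sound.
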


\begin{proof} 
Since we are assuming conditions {\bf (Q1--Q3)} are satisfied, 
internal angles $\theta_i$ are such that $0<\theta_i<\pi$. This, together with
Lemma~\ref{lemma:intangles}, implies that either~$\sum\theta_i=\pi$ or $\sum\theta_i=3\pi$.
From~\eqref{eq:anglestheta} in the proof of the previous lemma, 
choosing any  representatives for $x_1$ and for $p_1=q_1$, we 
have 
$$\theta_2+\theta_3=\arg\big(
\langle q_3,x_2\rangle\langle x_2,q_1\rangle \langle q_4,x_3\rangle
\langle x_3,q_2\rangle\big).$$ 
Thus, $0<\theta_2+\theta_3<\pi$
if and only if
$$0<\arg\big(
\langle q_3,x_2\rangle\langle x_2,q_1\rangle \langle q_4,x_3\rangle
\langle x_3,q_2\rangle\big)<\pi.$$
But,  
$\langle x_3,q_2\rangle=\langle x_2,q_3\rangle$
and $\langle x_2,q_1\rangle= \langle x_1,q_2 \rangle$. So, $0<\theta_2+\theta_3<\pi$
if and only if $0<\arg\big(\langle x_1,q_2\rangle
\langle q_3,x_4\rangle\big)<\pi$ or,
equivalently, $\imag\langle x_1,q_2\rangle\langle q_3,x_4\rangle>0$.
\end{proof}

\begin{thm}
Under the conditions \/ {\bf (Q1--Q4)}, the polyhedron delimited by the quadrangle\/ $\mathcal Q$
is a fundamental domain for the group generated by\/ 
$R_2,R_3,R_4,R_5$.
Thus, the representation\/ $\varrho:H_5\to\PU(2,1)$ given by\/
$\varrho(r_i)\coloneq R_i$ is discrete and faithful.
\end{thm}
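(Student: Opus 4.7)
The plan is to invoke the Poincaré polyhedron theorem for bisector-bounded (cornerless) polyhedra in $\HC$ as developed in \cite{yet} and applied in \cite{AGG2011, turnover}: once its hypotheses are verified for $\mathcal Q$ and the side-pairings $R_2,R_3,R_4,R_5$, discreteness, fundamentality of the interior of $\mathcal Q$, and the explicit group presentation follow formally.

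I would start by checking that $\mathcal Q$ is a cornerless polyhedron. Conditions (Q1)--(Q3) do essentially all the work here: (Q1) ensures each bisector segment $B[C_i,C_{i+1}]$ is well defined; (Q2) gives transversality along every edge $C_i$, so the dihedral angles are nondegenerate; (Q3) ensures the two triangular halves of Subsection~\ref{subsec:triangle} glue coherently along $C_1$ and $C_3$ to bound a single topological ball in $\overline{\HC}$. Strong simplicity reduces to metric separation of non-adjacent faces, which follows from (Q1) since the opposite complex geodesics $C_1,C_3$ and $C_2,C_4$ are ultraparallel and hence disjoint in $\overline{\HC}$.

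Next I would identify the face-pairings and the unique edge orbit. By construction $R_{i+1}C_i = C_{i+1}$ for $i=1,2,3$, and from $R_5R_4R_3R_2R_1 = \omega^2$ together with $R_1|_{C_1} = \mathrm{Id}$ one gets $R_5 C_4 = C_1$. Each involution $R_{i+1}$ therefore permutes the two slices of $B[C_i,C_{i+1}]$ and pairs this face with itself, so the four edges $C_1,\ldots,C_4$ form a single orbit. The cycle transformation obtained by traversing the orbit once equals $R_5R_4R_3R_2 = \omega^2 R_1$, which in $\PU(2,1)$ is $R_1$ — an involution fixing $C_1$ pointwise. Poincaré's cycle condition then demands that the sum of the four dihedral angles be $2\pi/2 = \pi$; by Lemma~\ref{lemma:intangles} this sum is automatically $\pi\pmod{2\pi}$, and Corollary~\ref{cor: angles_sum_pi} shows that (Q4) precisely selects the $\pi$ branch.

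With these hypotheses in place, Poincaré's theorem gives $\mathcal Q$ as the boundary of a fundamental domain for $\Gamma := \langle R_2,R_3,R_4,R_5\rangle$, declares $\Gamma$ discrete, and yields the presentation $\Gamma \cong \langle R_2,R_3,R_4,R_5 \mid R_i^2,\ (R_5R_4R_3R_2)^2\rangle$. Eliminating $r_1$ from $H_5 = \langle r_1,\ldots,r_5 \mid r_i^2,\ r_5r_4r_3r_2r_1\rangle$ via $r_1 = r_2r_3r_4r_5$ and translating $r_1^2 = 1$ into $(r_5r_4r_3r_2)^2 = 1$ produces exactly this presentation, so $\varrho$ is an isomorphism onto $\Gamma$. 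I expect the main obstacle to be the step translating (Q1)--(Q4) into the precise hypotheses of the bisector-polyhedron Poincaré theorem, in particular verifying strong simplicity and reconciling the orientation conventions used in the cycle angle computation with those of \cite{yet, AGG2011, turnover}.
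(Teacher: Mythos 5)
Your proposal follows essentially the same route as the paper's proof: verify that $\mathcal Q$ with the side-pairings $R_2,\dots,R_5$ satisfies the hypotheses of Poincar\'e's polyhedron theorem from \cite{yet}, use Lemma~\ref{lemma:intangles} and Corollary~\ref{cor: angles_sum_pi} to get the angle sum $\pi$ at the single edge cycle of $C_1$ (the paper phrases this as a geometric cycle of length $8$, you as the cycle transformation $R_5R_4R_3R_2=R_1$ having order $2$ --- these are equivalent), and read off the presentation $\langle R_2,\dots,R_5\mid R_i^2,(R_5R_4R_3R_2)^2\rangle$ to conclude faithfulness. The approach and all key steps match; only the bookkeeping of the cycle condition is packaged differently.
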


\begin{proof}
We prove that for points $p_i$ satisfying \eqref{eq:relation-omega2} and {\bf (Q1--Q4)},
the quadrangle $\mathcal Q$ with facepairing isometries $R_i$ satisfies
the three conditions of~\cite[Theorem~3.2]{yet}, and therefore~$\varrho$
is discrete and faithful. 

Let us denote by $\mathbf P$ the polyhedron bounded by the quadrangle 
$\mathcal Q\coloneq s_1 \cup s_2 \cup s_3 \cup s_4$,
where $s_1\coloneq \mathrm B[C_1,C_2]$, $s_2\coloneq \mathrm B[C_2,C_3]$, $s_3\coloneq \mathrm B[C_3,C_4]$, $s_4\coloneq \mathrm B[C_4,C_1]$ are the sides of the polyhedron. The involution $R_{i+1}$ satisfy $R_{i+1}s_i=s_i$ and $\mathbf P \cap R_{i+1}\mathbf P = s_i$ for $i=1,2,3,4$. For a point $x_1$ on the vertex $C_1$ we have the adjacent sides $s_4,s_1$. 

\begin{rmk}
Whenever we write 
$a \diamond x \diamond b \stackrel{I}{\longrightarrow} a' \diamond x' \diamond b'$
we mean that $a,b$ are sides adjacent to the vertex containing $x$, that $a',b'$ are adjacent sides to the vertex containing $x'$, 
and that $Ib=a'$ and $Ix=x'$.
\end{rmk}

The cycle of the vertex $C_1$ for $x$ is given by
\[\begin{tikzcd}
	& {s_2 \diamond R_3R_2x_1 \diamond s_3} \\
	{s_3 \diamond R_4R_3R_2x_1 \diamond s_4} && {s_1 \diamond R_2x_1 \diamond s_2} \\
	& {s_4 \diamond x_1 \diamond s_1}
	\arrow["{R_4}"', from=1-2, to=2-1]
	\arrow["{R_5}"', from=2-1, to=3-2]
	\arrow["{R_3}"', from=2-3, to=1-2]
	\arrow["{R_2}"', from=3-2, to=2-3]
\end{tikzcd}\]
For details, see \cite[Section 3]{yet}.

\smallskip

Denoting $x_2\coloneq  R_2x_1$, $x_3\coloneq  R_3x_2$, $x_4\coloneq  R_4x_3$ we have that the inner angles at $x_1,x_2,x_3,x_4$ are respectively the angles $\theta_1,\theta_2,\theta_3,\theta_4$ 
seem in~\eqref{eq:angletheta1} and~\eqref{eq:anglestheta} of the proof of
Lemma~\ref{lemma:intangles}.

Since $\theta_1+\theta_2+\theta_3+\theta_4=\pi$, the geometric cycle of vertices has length~$8$, meaning that the polyhedrons 
\begin{equation*}
\begin{gathered}\mathbf P, \quad R_2 \mathbf P, \quad R_2R_3 \mathbf P, \quad R_2R_3 R_4 \mathbf P, \quad R_2R_3 R_4 R_5 \mathbf P,\\ \quad R_2R_3 R_4 R_5 R_2 \mathbf P, \quad R_2R_3 R_4 R_5 R_2 R_3 \mathbf P,\quad R_2R_3 R_4 R_5  R_2 R_3 R_4 \mathbf P
\end{gathered}
\end{equation*}
tessellate around the vertex $C_1$. Similarly, we have tessellations around each vertex $C_i$, thus providing a tessellation of the whole complex hyperbolic plane as a consequence of Poincar\'e's polyhedron theorem \cite[Theorem 3.2]{yet}. Therefore, the representation $\varrho$ is discrete.

From Poincar\'e's theorem, we also obtain that the group generated by the isometries $R_1,R_2,R_3,R_4,R_5$ coincides with $\langle R_2,R_3,R_4,R_5 \mid R_2^2=R_3^2=R_4^2=R_5^2=(R_5R_4R_3R_2)^2=1\rangle$. The relations $R_i^2=1$ and $(R_5R_4R_3R_2)^2=1$ follow from $R_i s_i=s_i$ and the tessellation around $C_1$, respectively. More precisely, we mean that $\varrho:H_5 \to \PU(2,1)$ is faithful.
\end{proof}

\subsection{The quadrangle conditions}
\label{subsec:quadrangleconditions}

We now reformulate conditions {\bf(Q1--Q3)} in terms of the Gram's matrix
of the polar points of complex geodesics~$C_i$.
Let $q_1\coloneq p_1$ and $q_i\coloneq R_iq_{i-1}$. Then, for $i=1,2,3,4$, $q_i\in\EV$ is
the polar point of~$C_i$, that is, $C_i=\mathbb Pq_i^\perp\cap\HC$.

Define
\begin{equation}
\begin{gathered}
t_{ij}\coloneq \sqrt{\tance(q_i,q_j)},\quad 
\eta_1\coloneq \frac{\langle q_1,q_2\rangle\langle q_2,q_3\rangle\langle q_3,q_1\rangle}
{\langle q_1,q_1\rangle\langle q_2,q_2\rangle\langle q_3,q_3\rangle},\quad
\eta_2\coloneq \frac{\langle q_1,q_3\rangle\langle q_3,q_4\rangle\langle q_4,q_1\rangle}
{\langle q_1,q_1\rangle\langle q_3,q_3\rangle\langle q_4,q_4\rangle},\quad
\\
\varepsilon\coloneq \frac{\eta_1}{|\eta_1|},\quad\varepsilon_0\coloneq \real\varepsilon,
\quad \varepsilon_1\coloneq \imag\varepsilon,\quad \chi\coloneq \frac{\eta_2}{|\eta_2|},
\quad\chi_0\coloneq \real\chi,\quad\chi_1\coloneq \imag\chi.
\end{gathered}
\end{equation}

 \begin{prop}
 \label{prop:equiv-q1}
 Condition\/ {\bf(Q1)} is equivalent to\/ $t_{ij}>1$ for any indices\/ $i\neq j$. 
 \end{prop}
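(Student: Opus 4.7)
The plan is to invoke Lemma~\ref{lemma:trianglelines} directly, once we have verified that the $C_i$ are actually hyperbolic lines (so that Lemma~\ref{lemma:trianglelines} applies). The only real content to check is that each polar point $q_i$ is positive, which is straightforward.

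First, I would observe that $q_1 = p_1 \in \EV$ by the hypothesis on relation~\eqref{eq:relation-omega2}. Since every reflection $R^{p}$ lies in $\SU(2,1)$ and therefore preserves the Hermitian form, it preserves the sign of $\langle q,q\rangle$ for every non-isotropic $q$. By induction on $i$, we obtain $q_i = R_i q_{i-1} \in \EV$ for $i = 2, 3, 4$. Consequently, each $C_i = \mathbb{P}\, q_i^\perp \cap \HC$ is a hyperbolic line, with polar point~$q_i$, and Lemma~\ref{lemma:trianglelines} is applicable to every pair $\{C_i,C_j\}$.

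Next, I would apply the ultraparallel clause of Lemma~\ref{lemma:trianglelines} to each of the six unordered pairs: $C_i$ and $C_j$ are ultraparallel if and only if $\tance(q_i,q_j) > 1$. Since $q_i,q_j \in \EV$, we have $\tance(q_i,q_j) = |\langle q_i,q_j\rangle|^2 / (\langle q_i,q_i\rangle\langle q_j,q_j\rangle) \geq 0$, so $t_{ij} := \sqrt{\tance(q_i,q_j)}$ is a well-defined nonnegative real number, and the inequality $\tance(q_i,q_j) > 1$ is equivalent to $t_{ij} > 1$. Combining both directions for every pair $i \neq j$ in $\{1,2,3,4\}$ gives exactly condition~{\bf(Q1)}.

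There is essentially no obstacle in this proof; the entire statement is packaged by Lemma~\ref{lemma:trianglelines}. The only point worth spelling out carefully is the inductive check that $q_i \in \EV$, so that the lemma is actually applicable to every $C_i$.
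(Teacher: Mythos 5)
Your proof is correct and follows the same route as the paper, which simply notes that the proposition follows directly from Lemma~\ref{lemma:trianglelines}. The only addition is your explicit inductive check that each $q_i$ lies in $\EV$ (since $R_i\in\SU(2,1)$ preserves the Hermitian form), a point the paper leaves implicit.
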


 This proposition follows directly from Lemma~\ref{lemma:trianglelines}. The following one 
 is just the algebraic conditions for transversality in Subsection~\ref{subsec:triangle}.
 
\begin{prop}
\label{prop:equiv-q2}
Condition\/ {\bf(Q2)} is equivalent to\/
\begin{equation}
\label{eq:trans123}
\begin{gathered}
\varepsilon_1<0, \quad t_{12}^2\varepsilon_0^2+t_{23}^2+t_{31}^2
<1+2t_{12}t_{23}t_{31}\varepsilon_0,\\
t_{23}^2\varepsilon_0^2+t_{31}^2+t_{12}^2<1+2t_{12}t_{23}t_{31}\varepsilon_0,
\quad t_{31}^2\varepsilon_0^2+t_{12}^2+t_{23}^2<1+2t_{12}t_{23}t_{31}\varepsilon_0,
\end{gathered}
\end{equation}
and
\begin{equation}
\label{eq:trans134}
\begin{gathered}
\chi_1<0, \quad t_{13}^2\chi_0^2+t_{34}^2+t_{41}^2<1+2t_{13}t_{34}t_{41}\chi_0,\\
t_{34}^2\chi_0^2+t_{41}^2+t_{13}^2<1+2t_{13}t_{34}t_{41}\chi_0, 
\quad t_{41}^2\chi_0^2+t_{13}^2+t_{34}^2<1+2t_{13}t_{34}t_{41}\chi_0.
\end{gathered}
\end{equation}
\end{prop}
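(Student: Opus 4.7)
The plan is to apply the criteria from Subsection~\ref{subsec:triangle} separately to each of the two triangles of bisectors $\triangle(C_1,C_2,C_3)$ and $\triangle(C_1,C_3,C_4)$ appearing in condition~{\bf (Q2)}. Since that subsection already provides both the algebraic test for transversality (equation~\eqref{eq:tranversalcond}) and the sign convention characterising counterclockwise orientation (the imaginary part of the normalized triple product being negative), essentially no new geometry is needed.

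First, I would verify the compatibility of notation. In Subsection~\ref{subsec:triangle} the quantity $\varepsilon$ is defined as the normalized triple product divided by its own absolute value; here $\varepsilon$ arises from $\eta_1$ by the same normalization. Because each $q_i$ lies in $\EV$, we have $\langle q_i,q_i\rangle>0$, so dividing the triple product $\langle q_1,q_2\rangle\langle q_2,q_3\rangle\langle q_3,q_1\rangle$ by $\langle q_1,q_1\rangle\langle q_2,q_2\rangle\langle q_3,q_3\rangle$ only rescales by a positive real number and leaves the argument unchanged; hence $\varepsilon_0,\varepsilon_1$ agree with those of Subsection~\ref{subsec:triangle}. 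The same reasoning applied to $q_1,q_3,q_4$ shows that $\chi$ is exactly the corresponding parameter for the triangle $\triangle(C_1,C_3,C_4)$. I would also note that the $t_{ij}$ in this subsection is defined as $\sqrt{\tance(q_i,q_j)}$, so the symbol $t_{ij}$ used in~\eqref{eq:tranversalcond} corresponds to $t_{ij}^2$ in~\eqref{eq:trans123} and~\eqref{eq:trans134}, matching the shape of the displayed inequalities.

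Second, I would apply Criterion~A.31 from \cite{AGG2011} (stated in Subsection~\ref{subsec:triangle} as~\eqref{eq:tranversalcond}) to $\triangle(C_1,C_2,C_3)$: transversality of its three sides is equivalent to the three inequalities involving $\varepsilon_0$ and $t_{12},t_{23},t_{31}$ in~\eqref{eq:trans123}. The counterclockwise condition, recalled at the end of Subsection~\ref{subsec:triangle}, is $\varepsilon_1<0$. Together these reproduce \eqref{eq:trans123}. Repeating verbatim the same step for $\triangle(C_1,C_3,C_4)$, using $\chi$ in place of $\varepsilon$ and the indices $1,3,4$ in place of $1,2,3$, yields \eqref{eq:trans134}.

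There is no genuine obstacle here; the proposition is a direct translation of previously recorded criteria to the specific pair of triangles that form the quadrangle~$\mathcal Q$. The only thing to be careful about is the bookkeeping described above (the rescaling from $\eta_1,\eta_2$ to $\varepsilon,\chi$ and the squaring of the tance variables), after which the two directions of the equivalence are immediate.
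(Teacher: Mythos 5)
Your proposal is correct and takes essentially the same route as the paper, which likewise proves the proposition simply by applying the transversality criterion~\eqref{eq:tranversalcond} and the counterclockwise condition $\varepsilon_1<0$ (resp.\ $\chi_1<0$) from Subsection~\ref{subsec:triangle} to each of the two triangles $\triangle(C_1,C_2,C_3)$ and $\triangle(C_1,C_3,C_4)$. One small correction to your bookkeeping: substituting $t_{ij}\mapsto t_{ij}^2$ into~\eqref{eq:tranversalcond} does \emph{not} reproduce the shape of~\eqref{eq:trans123} (it would produce fourth powers and a product $t_{12}^2t_{23}^2t_{31}^2$); the two displays are literally identical as printed, and the discrepancy is resolved by noting that the definition $t_{ij}\coloneq\tance(q_i,q_j)$ in Subsection~\ref{subsec:triangle} is the typo --- the Gram determinant formula given there, $\det[\langle q_i,q_j\rangle]=1+2t_{12}t_{23}t_{31}\varepsilon_0-t_{12}^2-t_{23}^2-t_{31}^2$ for normalized $q_i$, already forces $t_{ij}=\sqrt{\tance(q_i,q_j)}$, the convention adopted in Subsection~\ref{subsec:quadrangleconditions}.
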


From~\eqref{eq:tranversalcond} in Subsection~\ref{subsec:triangle}, condition~\eqref{eq:trans123} means that 
$\triangle(C_1,C_2,C_3)$ is transversal and counterclockwise oriented and 
condition~\eqref{eq:trans134} means that $\triangle(C_1,C_3,C_4)$ is transversal
and counterclockwise oriented.

\smallskip

The following proposition concerning the condition {\bf(Q3)} is true due to the characterization for the transversality of bisectors following the inequality~\eqref{eq:bisec-transv} and the remark by the end of Subsection~\ref{subsec:bisectors}, stating that the normal vector field \eqref{eq:bisec-normal} points toward the region where the function defined 
in~\eqref{eq:bisector} is negative.

\begin{prop}
\label{prop:equiv-q3}
Condition\/ {\bf(Q3)} holds if and only if
\begin{equation}
\Big|\real\frac{\langle q_4,q_2\rangle\langle q_3,q_3\rangle}
{\langle q_4,q_3\rangle\langle q_3,q_2\rangle}-1\Big|<
\sqrt{1-\frac{1}{\tance(q_3,q_4)}}\sqrt{1-\frac{1}{\tance(q_3,q_2)}},
\end{equation}
\begin{equation}
\Big|\real\frac{\langle q_4,q_2\rangle\langle q_1,q_1\rangle}
{\langle q_4,q_1\rangle\langle q_1,q_2\rangle}-1\Big|<
\sqrt{1-\frac{1}{\tance(q_1,q_4)}}\sqrt{1-\frac{1}{\tance(q_1,q_2)}},
\end{equation}
and there exists $x\in C_1$ such that
\begin{equation}
\label{eq:interior-sector}
\imag\frac{\langle q_2,R_5x\rangle\langle R_5x,q_3\rangle}{\langle q_2,q_3\rangle}>0
\quad\text{and}\quad
\imag\frac{\langle q_1,R_5x\rangle\langle R_5x,q_2\rangle}{\langle q_1,q_2\rangle}>0.
\end{equation}
\end{prop}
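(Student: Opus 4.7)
The plan is to translate each of the three geometric requirements comprising {\bf(Q3)} into the corresponding algebraic inequality in the statement, using tools already developed earlier in the paper.

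For the first two inequalities, we apply criterion~\eqref{eq:bisec-transv} directly: transversality of the cotranchal bisectors $\mathrm B[C_2,C_3]$ and $\mathrm B[C_3,C_4]$ along $C_3$ corresponds to the instance with positive polar points $(q_2,q_3,q_4)$, and transversality of $\mathrm B[C_2,C_1]$ and $\mathrm B[C_1,C_4]$ along $C_1$ corresponds to the instance with $(q_2,q_1,q_4)$. The quotients $\frac{\langle q_4,q_2\rangle\langle q_3,q_3\rangle}{\langle q_4,q_3\rangle\langle q_3,q_2\rangle}$ and $\frac{\langle q_4,q_2\rangle\langle q_1,q_1\rangle}{\langle q_4,q_1\rangle\langle q_1,q_2\rangle}$ written in the statement are the complex conjugates of the expressions in~\eqref{eq:bisec-transv} under the respective substitutions, hence have the same real part.

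The third condition characterizes $C_4$ lying in the sector from $\mathrm B[C_1,C_2]$ to $\mathrm B[C_2,C_3]$ at the shared vertex $C_2$. Combining the sign function $b(\cdot,q_i,q_j)$ of~\eqref{eq:bisector} with the remark at the end of Subsection~\ref{subsec:bisectors} (the normal $n(\cdot,q_i,q_j)$ points into $\{b<0\}$) and the identity $b(\cdot,q_2,q_i)=-b(\cdot,q_i,q_2)$, we reorient both bisectors to start at $C_2$; the counterclockwise condition from {\bf(Q2)} then forces the sector interior to be exactly $\{b(\cdot,q_1,q_2)>0\}\cap\{b(\cdot,q_2,q_3)>0\}$. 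Using $R_1x_1=-x_1$ and the relation~\eqref{eq:relation-omega2}, we obtain $x_4=-\omega^2 R_5 x_1$ as vectors, so $R_5 x$ projectively parameterizes $C_4$ as $x$ ranges over $C_1$ (in fact $C_4=R_5 C_1$). Thus~\eqref{eq:interior-sector} asserts precisely that the point $R_5 x\in C_4$ lies in the sector interior.

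Finally, existence of one such $x$ is equivalent to $C_4$ being entirely contained in the sector: once the first two inequalities hold, the cotranchal bisectors $\mathrm B\geo{C_2,C_1}$ and $\mathrm B\geo{C_1,C_4}$ are transversal and therefore intersect only along their shared slice $C_1$; since $C_4\subset\mathrm B\geo{C_1,C_4}$ and $C_4$ is disjoint from $C_1$ by {\bf(Q1)}, we get $C_4\cap\mathrm B\geo{C_1,C_2}=\varnothing$, and likewise $C_4\cap\mathrm B\geo{C_2,C_3}=\varnothing$. Hence $C_4$ sits in a single connected component of the complement of these two bisectors, and one test point determines the inclusion. The main delicacy is the sign bookkeeping: the interplay of the counterclockwise sign coming from {\bf(Q2)}, the definition $K_1^+\cap K_2^-$ of the sector, and the $\{b<0\}$-orientation convention of the normal, which is cleanest to verify by the same representative-level computation as the one in the remark following~\eqref{eq:bisec-normal}.
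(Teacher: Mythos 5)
Your proposal is correct and follows essentially the same route as the paper, which justifies the proposition in a single remark by invoking the transversality criterion~\eqref{eq:bisec-transv} and the fact that the normal field~\eqref{eq:bisec-normal} points into the region where~\eqref{eq:bisector} is negative. Your sign bookkeeping (the conjugation in the transversality quotients, $K_1^+=\{b(\cdot,q_1,q_2)>0\}$, $K_2^-=\{b(\cdot,q_2,q_3)>0\}$, and $R_5x$ parameterizing $C_4$) checks out, and your argument that a single test point suffices because $C_4$ is connected and disjoint from both bounding bisectors is a detail the paper leaves implicit until the computational section.
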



\begin{thm}
\label{thm:bend-quad}
Consider a relation\/~\eqref{eq:relation-omega2} satisfying\/ {\bf(Q1--Q4)}.
Let\/ $i$ be an index 
and let $B_i(s)$ be the one-pa{\-}ram{\-}e{\-}ter subgroup given by bendings 
of\/ $R^{p_{i+1}}R^{p_i}$, indices considered modulo~$5$.
There exists\/ $\varepsilon>0$ such that, for every\/ 
$-\varepsilon<s<\varepsilon$,
the quadrangle obtained by substituting\/ $R^{p_{i+1}}R^{p_i}$
by $R^{B_i(s)p_{i+1}}R^{B_i(s)p_i}$ in\/~\eqref{eq:relation-omega2}
also satisfies\/ {\bf(Q1--Q4)} and thus provides a discrete and faithful 
representation\/ $\varrho_s:H_5\to\PU(2,1)$ that is not\/ $\mathbb R$ or\/ 
$\mathbb C$-Fuchsian. Moreover, distinct values of\/ $s$ provide 
distinct representations modulo conjugation.
\end{thm}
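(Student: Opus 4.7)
The plan rests on three facts: $s\mapsto B_i(s)$ is smooth with $B_i(0)=\mathrm{Id}$; the bending $B_i(s)$ centralizes $R^{p_{i+1}}R^{p_i}$, so substituting its action leaves the relation~\eqref{eq:relation-omega2} unchanged; and conditions \textbf{(Q1)}--\textbf{(Q4)} are open. Indeed, by Propositions~\ref{prop:equiv-q1}, \ref{prop:equiv-q2}, and~\ref{prop:equiv-q3} together with Corollary~\ref{cor: angles_sum_pi}, each of \textbf{(Q1)}--\textbf{(Q4)} is encoded by strict inequalities in the Gram-matrix entries of $q_1,\ldots,q_4$ and in $\imag\langle x_1,q_2\rangle\langle q_3,x_4\rangle$; all of these quantities depend smoothly on~$s$ and hold strictly at $s=0$, so they persist on some interval $(-\varepsilon,\varepsilon)$. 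The preceding theorem then yields a discrete and faithful $\varrho_s:H_5\to\PU(2,1)$ for every $s$ in this interval.

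To exclude $\mathbb C$- and $\mathbb R$-Fuchsian behaviour, I would invoke the Toledo invariant: it is continuous on the representation variety, takes the value $\pm\chi$ on $\mathbb C$-Fuchsian representations and $0$ on $\mathbb R$-Fuchsian ones, and equals $\tfrac{2}{3}\chi$ on our family (as computed in Section~\ref{sec:toledo-invariant}). Since $\tau$ is locally constant on the open set of discrete faithful representations, no $\varrho_s$ in the above interval can be Fuchsian. Alternatively, a direct geometric analysis at $s=0$ works: if $\varrho_0$ preserved a complex geodesic $L$, then each $R^{p_j}$ with $p_j\in\HC$ would force $p_j\in L$ and $R^{p_1}$ would require $L=C_1$ or $L\perp C_1$; the first case gives $C_2=R^{p_2}C_1=C_1$, contradicting \textbf{(Q1)}, and the second forces the polar data into a configuration whose Toledo value is incompatible with the one realised by our quadrangle. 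The $\mathbb R$-Fuchsian case is handled analogously, and non-Fuchsianness is then propagated by openness.

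For the injectivity of $s\mapsto[\varrho_s]$, consider the map into the $4$-dimensional quotient $\mathcal P_{\Sigma,\delta}H_5/\PU(2,1)$ (Corollary~\ref{cor:dim-pdelta}). The infinitesimal bending $\tfrac{d}{ds}\big|_{s=0}B_i(s)$ is a nonzero element of the centralizer of the loxodromic isometry $R^{p_{i+1}}R^{p_i}$, and the induced motion on the tuple $(p_1,\ldots,p_5)$ only changes $p_i$ and $p_{i+1}$; such a motion cannot coincide with any infinitesimal $\PU(2,1)$-conjugation, which would necessarily move all five centres coherently. Hence the path $s\mapsto[\varrho_s]$ has nonzero velocity at $s=0$, is an immersion near $s=0$, and is therefore injective after possibly shrinking $\varepsilon$. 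The main obstacle is the non-Fuchsian step: openness and injectivity reduce to routine continuity and infinitesimal arguments, but excluding Fuchsian representations cleanly requires either the Toledo computation of Section~\ref{sec:toledo-invariant} or the delicate direct geometric analysis sketched above.
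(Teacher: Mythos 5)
Your first paragraph is essentially the paper's entire proof, which consists of two sentences: the conditions are strict inequalities, hence stable under small bendings, and the angle sum is $\pi$ mod $2\pi$ identically, so \textbf{(Q4)} reduces to the open condition of Corollary~\ref{cor: angles_sum_pi}. The remaining two claims of the theorem are not argued in the paper's proof at all, so your additions are genuinely useful rather than redundant. The Toledo route to excluding Fuchsian representations is the right one: $\tau=\tfrac23\chi$ is neither $0$ ($\mathbb R$-Fuchsian) nor $\pm\chi$ ($\mathbb C$-Fuchsian, by Toledo rigidity), and $\tau$ is constant along the continuous family; your alternative ``direct geometric analysis'' is vaguer and unnecessary. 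For injectivity, the phrase ``would necessarily move all five centres coherently'' is not yet an argument; the clean version is global rather than infinitesimal. If $I\varrho_sI^{-1}=\varrho_{s'}$, then $I$ fixes the three centres $p_j$ with $j\neq i,i+1$, since a reflection determines its centre; and three pairwise non-orthogonal points that span $V$ admit no nontrivial common stabilizer in $\PU(2,1)$ (the relations $|\lambda_j|=1$ and $\lambda_j\overline{\lambda_k}=1$ forced by unitarity give $I=\Id$), whence $B_i(s)p_i=B_i(s')p_i$ and $s=s'$. The one hypothesis to verify here is that the three unmoved centres are projectively independent: pairwise non-orthogonality is \textbf{(P1)}, but if the three points were collinear a one-parameter rotation about their common complex line would survive as a stabilizer, and one would additionally have to rule out that such a rotation realizes the bending on $p_i,p_{i+1}$.
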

\begin{proof} The conditions stated above are given by strict inequalities. Thus, they hold for small bendings. Additionally, the sum of inner angles is $\pi$ modulo $2\pi$, consequently, not changed under bendings.
\end{proof}

\begin{cor}
\label{cor:dim-discrete}
Suppose\/ $\Sigma=(+,-,-,-,-)$ and\/ $\delta=\omega^2$. Then there exists an open\/
{\rm (}$4$-dimensional\/{\rm )} subset of\/ $\mathcal P_{\Sigma,\delta}H_5/\PU(2,1)$
consisting of discrete and faithful representations. Moreover, such open set
is bendings-connected.
\end{cor}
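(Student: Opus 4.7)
The plan is to combine four ingredients already developed in the paper: the explicit example that will be exhibited in Section~\ref{sec:computational}, the fact that conditions \textbf{(Q1--Q4)} are strict inequalities, the dimension count of Corollary~\ref{cor:dim-pdelta}, and the local persistence of discreteness along bendings established in Theorem~\ref{thm:bend-quad}.

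First, I would invoke the construction of Section~\ref{sec:computational} to obtain one concrete representation $\varrho_0\in\mathcal P_{\Sigma,\omega^2}H_5$ whose associated relation \eqref{eq:relation-omega2} produces a quadrangle $\mathcal Q$ satisfying \textbf{(Q1--Q4)}; the theorem at the end of Subsection~\ref{subsec:quadrangle} then guarantees that $\varrho_0$ is discrete and faithful. Next, using Propositions~\ref{prop:equiv-q1}, \ref{prop:equiv-q2}, \ref{prop:equiv-q3} together with Corollary~\ref{cor: angles_sum_pi}, all four quadrangle conditions translate into strict inequalities on the entries of the Gram matrix of $(q_1,q_2,q_3,q_4)$; hence the subset
\[
\mathcal O\subset\mathcal P_{\Sigma,\omega^2}H_5/\PU(2,1)
\]
of classes whose associated quadrangle satisfies \textbf{(Q1--Q4)} is open, and every representation in $\mathcal O$ is discrete and faithful. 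Since Corollary~\ref{cor:dim-pdelta} gives $\dim\mathcal P_{\Sigma,\omega^2}H_5/\PU(2,1)=4n-16=4$, the open subset $\mathcal O$ is $4$-dimensional.

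Let $\mathcal U$ be the connected component of $\mathcal O$ containing $[\varrho_0]$. To establish bending-connectedness of $\mathcal U$, I would show that the equivalence relation ``connected by a finite chain of bending-paths staying in $\mathcal U$'' has open equivalence classes, which, combined with connectedness of $\mathcal U$, forces there to be a single class. Openness of the classes is the main step: at $[\varrho]\in\mathcal U$, Theorem~\ref{thm:bend-quad} provides, for each $i\in\{1,\ldots,5\}$, a nontrivial bending-path $[\varrho_s^{(i)}]$ staying in $\mathcal U$ for $|s|<\varepsilon_i$, and one checks, using the analysis of the surface $\mathcal S_{\pmb\sigma,\tau}$ in Subsection~\ref{subsec:bend-triples} (bendings of $R^{p_{i+1}}R^{p_i}$ for $i=1,2$ move a triple along two transverse families on $\mathcal S_{\pmb\sigma,\tau}$) together with the symmetric role played by bendings of $R^{p_5}R^{p_4}$ in the factor $F=R^{p_5}R^{p_4}$ of \eqref{eq:relation-omega2}, that the differentials of these bending-paths span the $4$-dimensional tangent space $T_{[\varrho]}\mathcal U$. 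A standard inverse-function-theorem argument then shows that short words in the five bending flows cover a full neighborhood of $[\varrho]$ inside $\mathcal U$, so the equivalence class is open, completing the argument.

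The main obstacle is precisely the last point: demonstrating that the five bending one-parameter groups span the $4$-dimensional tangent space at every point of $\mathcal U$. This is plausible from the ``two horizontal plus two vertical'' directions on $\mathcal S_{\pmb\sigma,\tau}$ together with bendings of the loxodromic factor $F$, but it requires an explicit rank computation in Gram-matrix coordinates; once this rank statement is in hand, the rest of the corollary follows formally from Theorem~\ref{thm:bend-quad} and Corollary~\ref{cor:dim-pdelta}.
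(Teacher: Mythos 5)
Your first three ingredients --- nonemptiness via the explicit example of Section~\ref{sec:computational}, openness because \textbf{(Q1--Q4)} reduce to strict inequalities (Propositions~\ref{prop:equiv-q1}--\ref{prop:equiv-q3} together with Corollary~\ref{cor: angles_sum_pi}), and the dimension count $4n-16=4$ from Corollary~\ref{cor:dim-pdelta} --- are exactly what the corollary rests on. The gap is in the bending-connectedness part. You set out to prove the stronger statement that any two points of a component of the open set are joined by bending chains \emph{staying inside} the open set, and your argument hinges on the claim that the five bending flows span the $4$-dimensional tangent space at every point, which you yourself flag as the main obstacle and do not establish. That rank statement is not automatic: the ``vertical'' and ``horizontal'' bending directions on $\mathcal S_{\pmb\sigma,\tau}$ become tangent along the curve $C$ of Figure~\ref{fig:curve-c}, and the experiments of Subsection~\ref{subsec:comp-bend} (Figure~\ref{fig:hol}) exhibit bending paths along which the tessellation conditions break, so arranging for chains to remain in the open set is genuinely delicate. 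As written, this step is missing.

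The paper's route is much shorter and proves a weaker (but sufficient) statement. Proposition~\ref{prop:bending-connected} already asserts that the \emph{entire} nonempty component $\mathcal P_{\Sigma,\omega^2}H_5$ is bending-connected, i.e.\ any two representations in it are related by finitely many bending-deformations; the open subset of discrete and faithful representations inherits this property simply as a subset, with no requirement that the intermediate representations remain discrete. Theorem~\ref{thm:bend-quad} is then needed only to guarantee that bendings move one nontrivially within the open set (distinct small $s$ give distinct classes modulo conjugation), not to generate all tangent directions. If you replace your inverse-function-theorem scheme by a citation of Proposition~\ref{prop:bending-connected}, the corollary follows immediately from the three ingredients you already have.
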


\section{The Toledo invariant}
\label{sec:toledo-invariant}
\begin{defi}  Let $\Gamma$ be a Fuchsian group and $\varrho: \Gamma \to \PU(2,1)$ be a representation. The {\it Toledo invariant\/} of~$\varrho$ is given by the integral
$$\tau \coloneq  \frac{4}{2\pi} \int_{\HH_\CC^1/\Gamma} f^\ast\varpi,$$
where $f: \HH_\CC^1 \to \HH_\CC^2$ is a $\Gamma$-equivariant map and $\varpi$ is 
the symplectic form of $\HH_\CC^2$, the imaginary part of the Hermitian metric.
\end{defi}

\begin{rmk}There is always a $\Gamma$-equivariant map $f:\HH_\CC^1 \to \HH_\CC^2$ and the Toledo invariant does not depend on its choice. See \cite[Lemma 34]{orbigoodles}.
\end{rmk}

Given a tessellation $\HH_\CC^2$ for the disc orbibundle using quadrangle of 
bisectors associated to a discrete and faithful representation $H_5 \to \PU(2,1)$ as 
described in Subsection~\ref{subsec:quadrangle}, 
we can take an arbitrary point 
$x_1\in C_1$ and define
$x_2\coloneq  R_2x_1$, $x_3\coloneq R_3x_2$, $x_4\coloneq R_4x_3$.
It follows from~\eqref{eq:relation-omega2} that 
$x_4=-\omega^2R_5x_1$. 


\begin{figure}[htb]
\centering
\includegraphics[scale=.9]{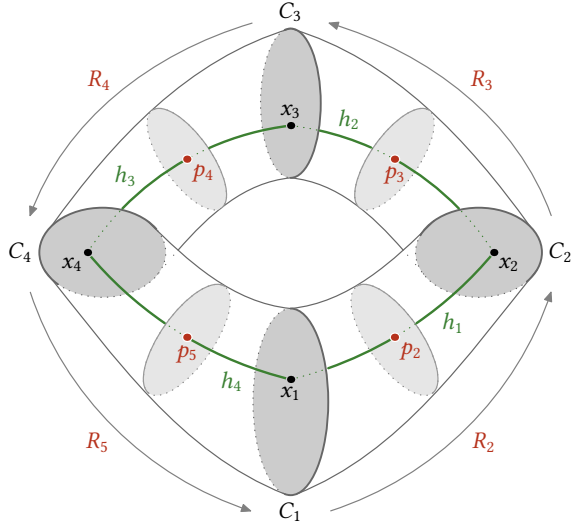}
\caption{Geodesics $h_i$ along the quadrangle of bisectors}
\label{fig:quad-h}
\end{figure}

Let $h_i$ be the geodesic segment connecting 
$x_i$ to $x_{i+1}$. Note that $h_i$ is a curve in $\mathrm B[C_{i},C_{i+1}]$.
In fact, the real slice of $\mathrm B[C_{i},C_{i+1}]$ defined by $x_i$ (is totally geodesic and) contains the real spine, which implies that it contains $p_{i+1}$; the geodesic through $x_i$ and $p_{i+1}$ contains $x_{i+1}$.
Note that the curve $h = h_1 \cup h_2 \cup h_3 \cup h_4$ is the boundary for a quadrilateral $F$ embedded on 
the interior of the fundamental domain. Via the tesselation we can extend the quadrilateral to a $G$-equivariant 
embedding $\HH_\CC^1 \to \HH_\CC^2$. Thus
$$\tau =\frac{4}{2\pi} \int_{F} \varpi.$$

The symplectic form is a closed $2$-form and therefore it admits a Kähler potential $P$ such that $\mathrm{d}P = \varpi$. Following Stokes Theorem, we obtain
$$\tau =\frac{4}{2\pi} \int_{h} P,$$
because $\partial F = h$. 
For each $c \in \HH_\CC^2$, the $1$-form $$P_c(t) = -\mathrm{Im}\frac{\langle t(x), c\rangle}{\langle x, c \rangle}, \quad t \in T_x\HH_\CC^2,$$
is a K\"ahler potential (see \cite[Appendix A.5]{AGG2011}).
Additionally, given $c_1,c_2 \in \HH_\CC^2$, we have
$$P_{c_1} - P_{c_2} = \mathrm{d} f_{c_1,c_2}$$
for $$f_{c_1,c_2}(x) = \frac12 \arg\frac{\langle c_1,x \rangle\langle x,c_2 \rangle}{ \langle c_1,c_2 \rangle}.$$

\begin{thm}
\label{thm:toledo}
For the disc orbibundles here constructed
$$\tau =\frac 23 \chi.$$
\end{thm}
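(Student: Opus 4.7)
The plan is to reduce $\tau$ to a boundary integral via Stokes, split it into the four geodesic segments $h_i$, and exploit an orientation-reversing symmetry of each $h_i$ to cancel the bulk of the integral. First I observe that, for each $i=1,2,3,4$, the segment $h_i$ runs from $x_i$ to $x_{i+1}=R_{i+1}x_i$ (with the convention that for $i=4$ the endpoint is $x_5:=R_5x_4$, which equals $-\omega^2 x_1$ as vectors: this follows from the relation $R_5R_4R_3R_2R_1=\omega^2$ combined with $R_1x_1=-x_1$, because $x_1\in C_1=\mathbb P p_1^\perp$). Thus the midpoint of each $h_i$ is the negative center $p_{i+1}\in\HH_\CC^2$.

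The K\"ahler potential $P_{p_{i+1}}$ is smooth on $\HH_\CC^2$ (since $\mathbb P p_{i+1}^\perp$ does not meet the ball), and is preserved by pullback under the holomorphic isometry $R_{i+1}$ (which fixes its center $p_{i+1}$). Because $R_{i+1}$ also reverses orientation on $h_i$, the change-of-variables identity gives
$$\int_{h_i} P_{p_{i+1}} = \int_{h_i} R_{i+1}^\ast P_{p_{i+1}} = -\int_{h_i} P_{p_{i+1}},$$
hence this integral vanishes. Now fix any base point $c\in\HH_\CC^2$. From $P_c-P_{p_{i+1}}=df_{c,p_{i+1}}$, Stokes yields $\int_{h_i}P_c = f_{c,p_{i+1}}(x_{i+1})-f_{c,p_{i+1}}(x_i)$. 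Self-adjointness of $R_{i+1}$ together with $R_{i+1}p_{i+1}=p_{i+1}$ gives $\langle x_{i+1},p_{i+1}\rangle=\langle x_i,p_{i+1}\rangle$, so the $p_{i+1}$-dependent factors in $f_{c,p_{i+1}}(x)=\tfrac12\arg\bigl[\langle c,x\rangle\langle x,p_{i+1}\rangle/\langle c,p_{i+1}\rangle\bigr]$ cancel, leaving
$$\int_{h_i}P_c \;\equiv\; \tfrac{1}{2}\arg\frac{\langle c,x_{i+1}\rangle}{\langle c,x_i\rangle}\pmod{\pi}.$$

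Summing over $i=1,\dots,4$, the product telescopes to $\langle c,x_5\rangle/\langle c,x_1\rangle$; using $x_5=-\omega^2 x_1$ and conjugate-linearity in the second slot ($\overline{-\omega^2}=-\omega$) collapses this ratio to $-\omega$. Hence $\int_h P_c=\tfrac12\arg(-\omega)=-\pi/6$, giving $\tau=\tfrac{4}{2\pi}\cdot(-\pi/6)=-\tfrac13$. Combined with $\chi=\chi(\HH_\CC^1/H_5)=2-\tfrac52=-\tfrac12$ from Subsection~\ref{subsec:pu11-repr}, this produces $\tau=\tfrac23\chi$ as claimed.

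The main subtlety is fixing the branch of $\arg$ in the telescoping step, since each $f_{c,p_{i+1}}$ is only defined modulo $\pi$. This will be settled either by tracking $\arg\langle c,\gamma_i(t)\rangle$ continuously along an explicit parametrization of each $h_i$ (so that the endpoint differences are genuine real numbers, not residues modulo $\pi$), or by invoking local constancy of $\tau$ on the bending-connected component from Proposition~\ref{prop:bending-connected} to reduce the entire claim to evaluation at the single concrete representative produced in Section~\ref{sec:computational}.
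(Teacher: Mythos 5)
Your reduction of $\tau$ to the boundary integral and the computation of its residue modulo $2$ are correct, but you reach that residue by a genuinely different route than the paper. The paper works asymmetrically with the single potential $P_{x_1}$: it kills $\int_{h_1}$ and $\int_{h_4}$ by observing that those geodesics lie in real planes through $x_1$, and then handles $h_2,h_3$ by comparing $P_{x_1}$ with $P_{x_2}$ and $P_{x_3}$ via the primitives $f_{x_1,x_i}$. You instead treat all four sides uniformly: the midpoint of $h_i$ is $p_{i+1}$, the reflection $R_{i+1}$ fixes $P_{p_{i+1}}$ and reverses orientation on $h_i$, so $\int_{h_i}P_{p_{i+1}}=0$, and the self-adjointness identity $\langle x_{i+1},p_{i+1}\rangle=\langle x_i,p_{i+1}\rangle$ makes the four contributions telescope to $\tfrac12\arg\bigl(\langle c,x_5\rangle/\langle c,x_1\rangle\bigr)=\tfrac12\arg(-\omega)$. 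This is a clean and symmetric argument (note that the vanishing of $\int_{h_i}P_{p_{i+1}}$ also follows directly from $p_{i+1}\in h_i$, since $\langle t(x),p_{i+1}\rangle/\langle x,p_{i+1}\rangle$ is real along a geodesic through the base point, which is exactly the mechanism the paper uses for its own vanishing statements). Both routes land on $\tau\equiv-\tfrac13\pmod 2$.

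The one genuine gap is the last step, which you flag but do not close: passing from $\tau\in-\tfrac13+2\mathbb Z$ to $\tau=-\tfrac13$. The paper's resolution is neither of the two you propose; it invokes the Toledo inequality $|\tau|\le|\chi|$ (Toledo for surface groups, extended to $2$-orbifold groups in the reference on orbifold Euler numbers), which together with $|\chi|=\tfrac12$ leaves $-\tfrac13$ as the only admissible value in $-\tfrac13+2\mathbb Z$. This is the missing idea: it is a one-line, fully rigorous conclusion, whereas your first alternative (continuous branch tracking along each $h_i$) is workable but would require a real argument about where $\langle c,\gamma_i(t)\rangle$ can wind, and your second alternative (local constancy of $\tau$ on the bending-connected component plus evaluation at the explicit example) still requires determining $\tau$ exactly at that one example, which the numerical data of Section~\ref{sec:computational} does not do with the rigor needed to select an element of a coset of $2\mathbb Z$. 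I recommend replacing your closing paragraph with the Toledo-inequality argument.
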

\begin{proof}
Consider the 
potential~$P_{x_1}$. Note that
$$\int_{h_1} P_{x_1}=0 \quad\text{and}\quad \int_{h_4} P_{x_1}=0,$$
because the curves $h_1,h_4$ are contained in real planes 
containing~$x_1$ and,
as consequence,
$\langle t(x), x_1\rangle/\langle x, x_1 \rangle$
is real for $x \in h_1 \cup h_4$ and $t$ tangent to the real planes. Therefore,
$$\tau = \frac{4}{2\pi} \int_{h_2} P_{x_1} + \frac{4}{2\pi} \int_{h_3} P_{x_1}.$$
Similarly, since $x_2 \in h_2$ and $x_3 \in h_3$ we have
$$\int_{h_2} P_{x_2} =\int_{h_3} P_{x_3} =0,$$ 
because 
\begin{gather*}\langle t(x), x_2\rangle/\langle x, x_2 \rangle \in \RR \quad \text{for}\quad x \in h_2\quad \text{and}\quad t \in T_xh_2,\\
\langle t(x), x_3\rangle/\langle x, x_3 \rangle \in \RR \quad \text{for}\quad x \in h_3\quad \text{and}\quad t \in T_xh_3.
\end{gather*}
Since 
$$f_{x_1,x_2}(x) = \frac12 \arg\frac{\langle x_1,x \rangle\langle x,x_2 \rangle}{ \langle x_1,x_2 \rangle},$$
satisfy $df_{x_1,x_2}=P_{x_1}-P_{x_2}$
we obtain
\begin{multline*}
\int_{h_2} P_{x_1} = \int_{h_2} P_{x_1} - P_{x_2} = 
\int_{h_2} \mathrm{d}f_{x_1,x_2}= 
\frac12 \arg\frac{\langle x_1,x_3 \rangle\langle x_3,x_2 \rangle}{ \langle x_1,x_2 \rangle} - \frac12 \arg\frac{\langle x_1,x_2 \rangle\langle x_2,x_2 \rangle}{ \langle x_1,x_2 \rangle}=\\
= \frac12\arg\frac{\langle x_1,x_3 \rangle\langle x_3,x_2 \rangle}{ \langle x_1,x_2 \rangle} - \frac{\pi}2\mod\pi.   
\end{multline*}
Similarly,
$$\int_{h_3} P_{x_1} = 
\int_{h_3} P_{x_1} - P_{x_3} = 
\frac12 \arg\frac{\langle x_1,x_4 \rangle\langle x_4,x_3 \rangle}{ \langle x_1,x_3 \rangle} - \frac{\pi}2\mod\pi$$
and, as consequence,
\begin{align*}
\tau &=\frac1{\pi} \arg\frac{\langle x_1,x_3 \rangle\langle x_3,x_2 \rangle}{ \langle x_1,x_2 \rangle} +  \frac1{\pi} \arg\frac{\langle x_1,x_4 \rangle\langle x_4,x_3 \rangle}{ \langle x_1,x_3 \rangle}\mod 2\\
&=\frac1{\pi} \arg\langle x_3,x_2 \rangle\langle x_2,x_1 \rangle +  \frac1{\pi} \arg\langle x_1,x_4 \rangle\langle x_4,x_3 \rangle\mod 2.
\end{align*}

Note that, choosing a representative for $x_1$ such that
$\langle x_1,x_1\rangle=-1$, we have $\langle x_i,x_i\rangle=-1$ for any~$i$ and
\begin{align*}
\langle x_{i+1},x_i \rangle &= \langle R_{i+1} x_i,x_i \rangle 
\\ &= \Big\langle -x_i + 2\frac{\langle x_i, p_{i+1} \rangle }
{\langle p_{i+1}, p_{i+1} \rangle} p_{i+1},x_i \Big\rangle
\\ &=1 - 2\tance(p_{i+1},x_i) < 0,
\end{align*}
for $i=1,2,3$.
This implies
$$\tau =  \frac1{\pi} \arg\langle x_1,x_4 \rangle  + 1\mod 2.$$
Additionally, from $x_4=-\omega^2R_5x_1$ we obtain
$\langle x_1,x_4\rangle=-\overline{\omega}^2(1-2\tance(x_1,p_5))$ and therefore
$$\tau = \frac{2}{3} + 1\mod 2 = -\frac{1}{3} \mod 2.$$

On the other hand, $\HH_\CC^1/{H_5}$ is the $2$-orbifold $\Sigma_{5,2}$, a sphere with $5$ cone points of angle $\pi$. Thus, the Euler characteristics of $\HH_\CC^1/{H_5}$ is
$$\chi = \chi(\mathbb S^2) + \sum_{k=1}^5 \Big(-1+\frac12\Big) = 2 - \frac 52 = -\frac 12.$$
The Toledo inequality states that $|\tau| \leq |\chi|$. See \cite[Prop.~1.4]{Toledo1989} for surface groups and \cite[Thm.~44]{orbigoodles} for $2$-orbifold groups. Thus, since $$\tau \in -\frac13+ 2\mathbb Z =\Big\{ \cdots,-\frac{13}{3},-\frac 73,-\frac13, \frac53,\frac{11}3,\cdots\Big\} $$ and $|\tau|\leq \frac12$ we conclude that
$\tau = -\frac13.$
\end{proof}

\section{Computational results}
\label{sec:computational}
Recall that once we have a Hermitian form $\langle -,- \rangle$ with signature $-++$ on $\mathbb C^3$, the set of points $p \in \mathbb P_{\mathbb C}^2$ defined by $\langle p,p \rangle <0$ is denoted by $\BV$ (or $\HH_{\mathbb C}^2$), the set defined by $\langle p,p \rangle >0$ is denoted by $\EV$, and the remaining points form $\SV$ (see Subsection~\ref{subsec:hyperbolic-spaces}). We construct an explicit example of points $p_i\in\mathbb P_{\mathbb C}^2$ satisfying~\eqref{eq:relation-omega2} and~{\bf (Q1--Q4)}.

\subsection{The relation}
\label{subsec:the-relation}
Take $\pmb{\sigma}=(\sigma_1,\sigma_2,\sigma_3)\coloneq (+1,-1,-1)$ and 
$\tau\coloneq -2.22 - 3.845152793\, i$. Note that $\tau$ is the trace of an isometry of the form
$\omega^2R^{p_4'}R^{p_5'}$ where $p_4',p_5'\in\BV$ are points with $\tance(p_4',p_5')=1.36$.

The point $(s_1,s_2,s)$ given by $s_1=-0.615$, $s_2=1.36$ and $s=-0.823663831$ is in 
$\mathcal S_{\pmb\sigma,\tau}$, that is, it satisfies~\eqref{eq:surface} and~\eqref{eq:inequalities-surface}. 
Note that $s_2$ is the starting tance and this choice is inspired by the construction in~\cite{SashaGusevskii2007}.

The Gram matrix associated with~$s_1,s_2,s$ (via the construction in Subsection~\ref{subsec:relative}) is
$$
G = \left[\begin{smallmatrix}
  1 & 0.7842193570679061 & -0.9006228957613458 + 0.5255531007357581\,i \\
  0.7842193570679061 & -1 &1.1661903789690602 \\
  -0.9006228957613458 - 0.5255531007357581\,i \quad & 1.1661903789690602 \quad& -1
\end{smallmatrix}
\right],
$$

Now we consider $\mathbb C^3$ equipped with the Hermitian form given by~$G$, i.e.,
the Hermitian form $\langle z,w \rangle = z^T G\overline{w}$, where we see vectors of $\CC^3$ as columns. This implies that
\begin{equation*}
p_1 = \left[\begin{smallmatrix}
  1 \\
  0 \\
  0
\end{smallmatrix}
\right], \quad 
p_2 = \left[\begin{smallmatrix}
  0 \\
  1 \\
  0
\end{smallmatrix}
\right],\quad
p_3 = \left[\begin{smallmatrix}
  0 \\
  0 \\
  1
\end{smallmatrix}
\right]
\end{equation*}
are such that the Gram matrix of 
$(p_1,p_2,p_3)$ is $G$ and $\trace R^{p_3}R^{p_2}R^{p_1}=\tau$. Taking
\begin{equation*}
p_4=\left[\begin{smallmatrix}
  -1.418265301931986 + 0.47532199460124075\, i \\
  -0.4357001043898596 + 2.288308794951551\, i \\
 -1.113898813019385+2.06429519903981\, i
\end{smallmatrix}\right],\quad 
p_5=\left[\begin{smallmatrix}
 -0.5282839230176636+0.922498012587838\, i \\
 1.110665387479294+0.873137721694037\, i \\
  0.6529626910515587 + 0.961593899934676\, i
\end{smallmatrix}\right],
\end{equation*}
which satisfy $\tance(p_4,p_5)=1.36$, we verify by direct computation that 
$R^{p_5}R^{p_4}R^{p_3}R^{p_2}R^{p_1}=\exp(-2\pi i/3)\Id$, that is, we obtain points satisfying~\eqref{eq:relation-omega2}. More precisely, with respect to the max norm for matrices, we have $$\Vert R^{p_5}R^{p_4}R^{p_3}R^{p_2}R^{p_1}  - \exp(-2\pi i/3)\Id\Vert_{\infty}<7\times 10^{-14}.$$

\subsection{Verifying transversality conditions}
Denote, as in Subsection~\ref{subsec:quadrangle}, $R_i\coloneq R^{p_i}$, $q_1\coloneq p_1$, $q_2\coloneq R_2q_1$, $q_3\coloneq R_3q_2$, and $q_4\coloneq R_4q_3$.
Note that the points $q_i$ are all positive.

$\bullet$ Condition {\bf (Q1)}: the complex geodesics $C_i \coloneq  \mathbb{P}(q_i^{\perp}) \cap {\mathbb H}_{\mathbb C}^2$ are ultraparallel because $\tance(q_i,q_j) > 1$ for~$i\neq j$:
\begin{equation*}
\begin{gathered}
\tance(q_1,q_2) \simeq 4.97, \quad \tance(q_2,q_3) \simeq 10.74, \quad \tance(q_3,q_4) \simeq 10.56, \\ 
\tance(q_4,q_1) \simeq 4.97, \quad \tance(q_1,q_3) \simeq 4.93, \quad \tance(q_4, q_2)\simeq 48.21,
\end{gathered}
\end{equation*}
(see Proposition \ref{prop:equiv-q1}).

Now we verify the condition {\bf (Q2)}, following Proposition \ref{prop:equiv-q2}.

$\bullet$ Condition {\bf (Q2)}: Following the notation in Subsection~\ref{subsec:quadrangleconditions},
the triangles of bisectors $\triangle(C_1,C_2,C_3)$ is counterclockwise oriented and transversal due to Proposition~\ref{prop:equiv-q2}:
\begin{align*}
&\varepsilon_1 \simeq -0.87, \\
&1+2t_{12}t_{23}t_{31}\varepsilon_0 - t_{12}^2\varepsilon_0^2 - t_{23}^2 - t_{31}^2 \simeq 0.31,\\
&1+2t_{12}t_{23}t_{31}\varepsilon_0 - t_{12}^2 - t_{23}^2\varepsilon_0^2 - t_{31}^2 \simeq 4.64,\\
&1+2t_{12}t_{23}t_{31}\varepsilon_0 - t_{12}^2 - t_{23}^2 - t_{31}^2\varepsilon_0^2 \simeq 0.28.
\end{align*}
The same goes for the triangle of bisectors $\triangle(C_1,C_3,C_4)$:
\begin{align*}
&\chi_1 \simeq -0.87, \\
&1+2t_{13}t_{34}t_{41}\chi_0 - t_{13}^2\chi_0^2 -t_{34}^2 - t_{41}^2 \simeq 0.32,\\
&1+2t_{13}t_{34}t_{41}\chi_0 - t_{13}^2 -t_{34}^2\chi_0^2 - t_{41}^2 \simeq 4.55,\\
&1+2t_{13}t_{34}t_{41}\chi_0 - t_{13}^2 -t_{34}^2 - t_{41}^2\chi_0^2 \simeq 0.35.
\end{align*}

$\bullet$ Condition {\bf (Q3)}: Here we follow Proposition~\ref{prop:equiv-q3}. First we must verify that the triangles of 
bisectors described above are transversally adjacent, meaning that the 
bisectors $\mathrm B[C_2,C_1]$ and $\mathrm B[C_4,C_1]$ are transversal, $\mathrm B[C_2,C_3]$ and
$\mathrm B[C_4,C_3]$ are transversal, and the sector defined by the bisector rays $\mathrm B[C_2,C_1\rcurvyangle$ and $\mathrm B[C_2,C_4\rcurvyangle$ contains the vertex $C_4$.
The transversality at $C_1$ occurs because
$$
\Big|\real\frac{\langle q_4,q_2\rangle\langle q_3,q_3\rangle}
{\langle q_4,q_3\rangle\langle q_3,q_2\rangle}-1\Big|-
\sqrt{1-\frac{1}{\tance(q_3,q_4)}}\sqrt{1-\frac{1}{\tance(q_3,q_2)}} \simeq -0.05
$$
and the transversality at $C_4$ happens because
$$\Big|\real\frac{\langle q_4,q_2\rangle\langle q_1,q_1\rangle}
{\langle q_4,q_1\rangle\langle q_1,q_2\rangle}-1\Big|-
\sqrt{1-\frac{1}{\tance(q_1,q_4)}}\sqrt{1-\frac{1}{\tance(q_1,q_2)}} \simeq -0.78.
$$
We now verify that $C_4$ is in the sector defined by $C_2$ and its adjacent sides:
\begin{equation*}
\imag\frac{\langle q_2,R_5x\rangle\langle R_5x,q_3\rangle}{\langle q_2,q_3\rangle}\simeq 11.69 \quad\quad\text{and}\quad\quad
\imag\frac{\langle q_1,R_5x\rangle\langle R_5x,q_2\rangle}{\langle q_1,q_2\rangle} \simeq 8.01,
\end{equation*}
where $x$ is the point in $C_1$ given by 
\begin{equation}
\label{eq:pointx}
x = \left[\begin{smallmatrix}
 0.6233725425638523 + 0.3637653164269322\,i\\
  0  \\
  0.6921571120361996 
\end{smallmatrix}\right].
\end{equation}

Note that $R_5x$ is just a arbitrary point of $C_4$ and that we are verifying if it belongs to the described sector. The presence of this point in the sector implies that the entire triangle $\triangle(C_1,C_2,C_3)$ is in the sector (see \cite[Section~2.3.2]{AGG2011}).

\smallskip

$\bullet$ Condition {\bf (Q4)}: For $x \in C_1$ given by~\eqref{eq:pointx}, the angles between transversal bisectors at $x$, $R_2x$, $R_3R_2x$, and $R_4R_3R_2x = R_5x$ sum up to $\pi$.  
By Corollary~\ref{cor: angles_sum_pi}, the sum is indeed~$\pi$ because
$$\imag \langle x,q_2 \rangle \langle q_3, R_5x \rangle \simeq 6.36.$$

Alternatively, note that by Proposition \ref{lemma:intangles}, we know the sum of angles is $\pi\ \mathrm{mod}\ 2\pi$. Thus, by computing the numerical value of the sum of the inner angles at the described points and showing it to be equal to $\pi$ up to the precision of the machine, we conclude the sum of angles must be precisely $\pi$.

The Toledo invariant $\tau = \frac23 \chi$ can be computed directly as well. Instead of using Theorem \ref{thm:toledo}, we can use the formulas present in its proof and compute the Toledo invariant directly.

The quadrangle bounds a fundamental domain for the action of $H_5$ and we can extend the natural fibration by open discs of the quadrangle to the fundamental domain, where the disc fibers of the quadrangle constitute in slices of bisectors. From this configuration we obtain a disc orbibundle $\HH_\mathbb{C}^2/H_5 \to \HH_\mathbb{C}^1/H_5$. The Euler number of the disc orbibundle can be computed using the formula $3\tau = 2e+2\chi$, valid due to~\cite[Section~4]{interplay}. Thus the Euler number is zero. Alternatively, we compute the Euler number directly in 
Subsection~\ref{subsec:eulerorbi}.

\subsection{Disc orbibundle structure}\label{subsec:discorbibundle}
Consider the example outlined in Subsection~\ref{subsec:the-relation}. 
The respective quadrangle with vertices $C_1, C_2, C_3, C_4$ satisfies the tesselation conditions described in Section~\ref{sec:discreteness}, thus giving rise to a complex hyperbolic disc orbibundle over $\Sigma_{5,2}$, the sphere with $5$ cone points of angle~$\pi$. More precisely, the quadrangle of bisectors bounds a closed $4$-ball $K$ inside $\CHC$ and this closed $4$-ball is foliated by closed discs, the discs on the quadrangle being the slices of the segments of bisectors defining its sides. The region $K \cap \HC$ tesselates $\HC$, with the identification relations provided by the isometries $R_i$, which glue slice to slice on the quadrangle. Thus, the region $K \cap \HC$ forms a fundamental domain for the action of $H_5$ on $\HC$ and $\HC/H_5$ defines a complex hyperbolic disc orbibundle. 

We now explain why the fibration exists. Our fundamental domain $K$, bounded by the quadrangle of bisectors for $C_1, C_2, C_3, C_4$, can be broken into two regions bounded by the transversal counter-clockwise oriented triangles of bisectors $\triangle(C_1, C_2, C_3)$ and $\triangle(C_1, C_3, C_4)$. Additionally, these triangles are transversally adjacent. The bisector $\mathrm{B}\geo{C_1, C_3}$ separates the ball $\HC$ into two components, having each triangle in a different component. It is possible to deform each triangle $\triangle(C_1,C_2,C_3)$ and $\triangle(C_1,C_3,C_4)$ separately to new triangles of bisectors $\triangle(C_1',C_2',C_3')$ and $\triangle(C_1',C_3',C_4')$ arising from geodesic triangles over the complex spine of $\mathrm{B}\geo{C_1,C_3}$. 

More explicitly, if $L=\mathbb Pq^\perp\cap \HC$ is the complex spine of $\mathrm{B}\geo{C_1,C_3}$, then there are $c_1',c_2',c_3',c_4' \in L$ forming a convex geodesic polygon such that $C_i' = \mathbb P (\mathbb C c_i'+\mathbb C q) \cap \HC$, $C_1'=C_1$, and $C_3'=C_3$.
The deformations of the triangles of bisectors are done keeping the vertices $C_1, C_3$ fixed and moving $C_2, C_4$ until we obtain $C_2', C_4'$, thus defining triangles of bisectors at each stage of the deformation. Lemma 2.28 in \cite{AGG2011} guarantees that the space of transversal counterclockwise oriented triangles of bisectors is path-connected, meaning we can deform each triangle of bisectors freely. In \cite[7.2. Deformation Lemma]{turnover}, it is discussed in detail how to do it with both triangles as described above, keeping fixed a segment of bisector separating the triangles. 

These triangles of bisectors arising from the complex spine of $\mathrm{B}\geo{C_1, C_3}$ are naturally fibered by complex geodesics because for each $x$ on the convex geodesic polygon defined by $c_1',c_2',c_3',c_4'$ over $L$ we have the complex geodesic $\mathbb P(\mathbb C x+ \mathbb C q) \cap \HC$  through $x$ orthogonal to $L$. Thus, the quadrangle of bisector arising over $L$ is naturally fibered and its fibration can be deformed back to our original quadrangle.

\subsection{Brute force computation of the Euler number}
\label{subsec:eulerorbi} We start with two brief remarks that will be needed ahead: first we define the Euler number for our setting, then we discuss the relative position of triplets of points in the boundary of a complex geodesic.

\bigskip

\begin{rmk}[Euler number]\label{defi:euler number}
\noindent Consider an oriented disc orbibundle $L\to \Sigma$. Like an oriented disc bundle over a closed and oriented surface, an oriented disc orbibundle $L~\to~\Sigma$ over a closed and oriented $2$-orbibundle~$\Sigma$ can be seen as a rank $2$ oriented vector orbibundle, by taking a section and considering its normal orbibundle inside the total space. Associated to this vector orbibundle, we have an oriented circle orbibundle $\mathbb{S}^1(L) \to \Sigma$. We now outline the Euler number of such disc bundle: let $x_1,\ldots,x_k$ be the singular points of $\Sigma$. By removing small open discs $D_i$ centered at each $x_i$, we obtain the surface with boundary $\Sigma' = \Sigma \setminus \sqcup_i D_i$. If $\sigma$ is a section for the circle bundle restricted to $\Sigma'$ (the section $\sigma$ exists because $\Sigma'$ is homotopic to a graph and oriented circle bundles over graphs are trivial), we can consider the element $\sigma|_{\partial \Sigma'}$ in the homology group $H_1(\mathbb{S}^1(L),\mathbb Q)$. Let $s$ be an oriented fiber over a regular point of $\Sigma$ for $\mathbb{S}^1(L) \to \Sigma$. There exists a rational number $e$ such that
$$\sigma|_{\partial \Sigma'} = - e s$$
in $H_1(\mathbb{S}^1(L),\mathbb Q)$. This number $e$ is the Euler number of the orbibundle (see \cite{orbigoodles} for details).
\end{rmk}

\begin{rmk}[Cyclic order in the boundary of a complex geodesic]
We now outline how to distinguish if three points over the boundary of a complex geodesic are in cyclic order following the orientation of the boundary: consider three distinct points $\xi_1,\xi_2,\xi_3$ on the boundary $\partial C$ of a complex geodesic $C \subset \HH_\CC^2$. For any representatives of these points, the number 
$$\langle \xi_1,\xi_2 \rangle\langle \xi_2,\xi_3 \rangle\langle \xi_3,\xi_1 \rangle$$
has real part zero and the imaginary part has a well-defined sign. If the imaginary part is negative, the points $\xi_1,\xi_2,\xi_3$ are in the cyclic order, meaning that they are set in this order when going around the circle~$\partial C$ following the counterclockwise orientation. Otherwise, the imaginary part is positive and we say that they are not in cyclic order (see~\cite[Section~7.1]{Goldman1999} for more details).
\end{rmk}

Back to our example, we identify the fibers of $\mathbb{S}^1(L) \to S$ with the circles arising from extending the fibers of $K$ to the absolute $\partial\HC$. We will denote the complex hyperbolic disc orbibundle by $L \to \Sigma_{5,2}$ and its associated circle orbibundle by $\mathbb S^1(L) \to \Sigma_{5,2}$. The singular points of $\Sigma_{5,2}$, which we denote by $x_1,x_2,x_3,x_4,x_5$, have as fibers 
$\partial C_1$, $\partial M_1/R_2$, $\partial M_2/R_3$, $\partial M_3/R_4$ and $\partial M_4/R_5$, where~$M_i$ is the middle slice of the segment of bisector $\mathrm B[C_i,C_{i+1}]$.

Let $m$ be the polar for the middle slice of the bisector $\mathrm B[C_1,C_3]$:
$$m=\left[\begin{smallmatrix}
    0.5910828046793254+0.3412618163949675\,i \\
    0.309025718039677+0.5352482444901744\,i \\
    0.007168088802214577+0.840803229440756\,i \\
\end{smallmatrix}\right].$$

The isometries $R^m R_3R_2$ and $R_5 R_4 R^m$ when restricted to $C_1$ are hyperbolic. Indeed, for 
$$
z=\left[\begin{smallmatrix}
    0.956848389978639+1.131906097990544\,i \\
    2.616046110576799\\
    3.038930164212775-0.5165459094148785\,i \\
\end{smallmatrix}\right] 
$$
in $\partial C_1$ we have
\begin{align*}
\big\langle z,R^m R_3R_2 z \big\rangle\big\langle R^mR_3R_2 z, (R^mR_3R_2)^2 z\big\rangle \big\langle (R^m R_3R_2)^2 z,z\big\rangle &\simeq 0.56\,i,\\
\big\langle z,R_5 R_4 R^m z \big\rangle\big\langle R_5R_4 R^m z, (R_5 R_4 R^m)^2 z\big\rangle \big\langle (R_5 R_4 R^m)^2 z,z \big\rangle &\simeq-0.56\,i,
\end{align*}
meaning that $z$ is moving in the clockwise direction under action of $R^mR_3R_2$ and in the counterclockwise direction under action of $R_5R_4R^m$.

On the other hand, for
$$z=\left[\begin{smallmatrix}
    1.391513324127465+0.03028946901214607\,i \\
    -2.616046110576799\,i \\
    0.175611202621321-2.346773052176575\,i \\
\end{smallmatrix}\right]$$
in $\partial C_1$, we have
\begin{align*}
\big\langle z,R^m R_3R_2 z \big\rangle \big\langle R^m R_3R_2 z, (R^m R_3 R_2)^2 z\big\rangle \big\langle (R^{m} R_3 R_2)^2 z,z \big\rangle &\simeq -0.89\,i,\\
\big\langle z,R_5 R_4 R^m z \big\rangle \big\langle R_5 R_4 R^m z, (R_5 R_4 R^m)^2 z \big\rangle \big\langle (R_5 R_4 R^m)^2 z,z \big\rangle &\simeq 0.89 \,i,
\end{align*}
meaning that $z$ is moving in the counterclockwise direction under the action of $R^m R_3 R_2$ and in the clockwise direction under the action of $R_5 R_4R^m$.

Thus $R^m R_3 R_2$ and $R_5 R_4 R^m$ restricted to $C_1$ are hyperbolic. Since $R_5 R_4 R_3R_2|_{C_1} = \Id_{C_1}$, we have that $R^m R_3R_2$ and $R_5R_4R^m$ share their fixed points on $C_1$. Consider the fixed point
\begin{equation}
\label{eq:fixedpoint}
z_1 = \left[\begin{smallmatrix}
0.84066419047005-0.4449563543970761\,i \\
-2.04715035667857-1.628764154141047\,i \\
-1.465881589653703-1.056897533805411\,i \\
\end{smallmatrix}\right]
\end{equation}
for $R^m R_3R_2|_{C_1}$. More precisely, the vector $z_1$, found by diagonalyzing $R^m R_3R_2$, satisfy 
\begin{gather*}
|\langle z_1,q_1 \rangle|<1.2 \times 10^{-16}, \quad \langle z_1,z_1 \rangle <1.0 \times 10^{-15}\\
\Vert R^m R_3R_2z_1 -(-0.4689270359547878-0.2707351504387908\, i) z_1\Vert_\infty<2.6 \times 10^{-15},\\
\Vert R_5 R_4R^mz_1 -(-0.4689270359547878 +0.2707351504387908\, i)z_1\Vert_\infty<4 \times 10^{-14}.
\end{gather*}

We will use this point to construct a section over the regular points of the circle orbibundle $\mathbb S^1(L) \to \Sigma_{5,2}$. This is done by constructing a curve over the boundary of the quadrangle $\mathcal Q = \mathrm B[C_1,C_2]\cup \mathrm B[C_2,C_3]\cup \mathrm B[C_3,C_4]\cup \mathrm B[C_4,C_1]$ that can be glued under relations imposed by the reflections $R_2,R_3,R_4,R_5$ and that cross each boundary of complex geodesic fibering $\mathcal Q$ only once, except for the discs over singular points. 

Let us construct the section. Consider the fixed point~$z_1$ defined in~\eqref{eq:fixedpoint} (see Figure \ref{fig:section_strange_curve}). Recall that~$M_i$ is the middle slice of the bisector segment $\mathrm B[C_i,C_{i+1}]$. Connect $\partial C_1$ to $\partial M_1$ via a meridional curve starting at $z_1$. We denote its endpoint by $z_2$. Define $z_3\coloneq  R_2 z_2$ and let $z_4$ be the endpoint of the meridional curve starting at $z_3$ connecting $\partial M_1$ to $\partial C_2$. Note that $z_4 = R_2 z_1$. Similarly, we connect $z_4$ to $z_5$ through a meridional curve connecting $\partial C_2$ to $\partial M_3$ and define  $z_6 \coloneq  R_3 z_5$. The point $z_7$ is the end point of the meridional curve starting at $z_6$, connecting $\partial M_3$ to $\partial C_3$. In a similar procedure we construct $z_8$, $z_9$, $z_{10}$, $z_{11}$ and $z_{12}$. Note that $z_{10}=R_4 z_7$ and $z_1=R_5 z_{10}$. 

\begin{figure}[htb]
\centering
\includegraphics[scale=.9]{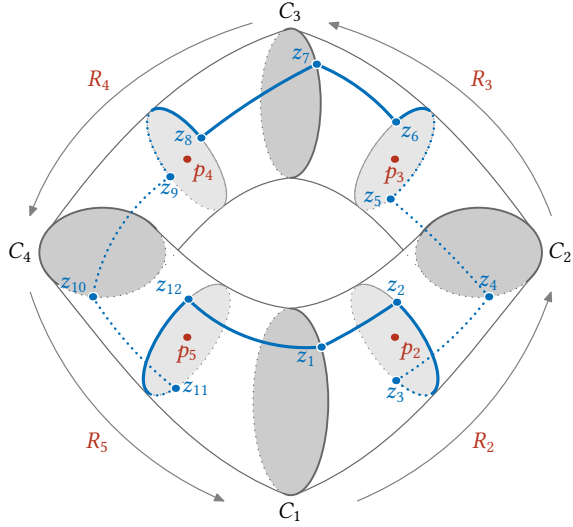}
\caption{The curve~$\gamma$ in {\color{RoyalBlue} blue} provides us with a section.}
\label{fig:section_strange_curve}
\end{figure}

We join $z_2$ to $z_3$ by an oriented circle segment in the clockwise direction. We also join $z_8$ to $z_9$ by an oriented circle segment in the clockwise direction. The oriented circle segments connecting $z_5$ to $z_6$ and $z_{11}$ to $z_{12}$ are taken in the counterclockwise orientation. We denote the constructed curve by $\gamma$, oriented in such a way it passes through the points $z_i$'s in the order of their indexes.

\begin{figure}[htb]
\centering
\includegraphics[scale=.9]{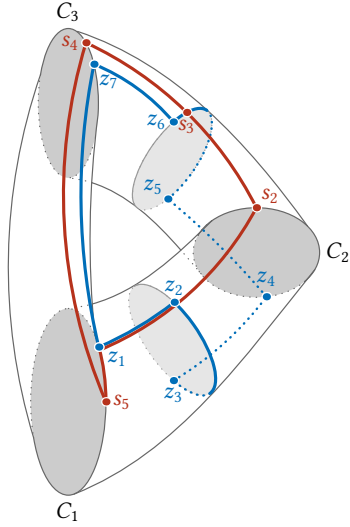}
\caption{The curve in {\color{RoyalBlue}blue} is the part of $\gamma$ on the right triangle with the meridional curve $[z_7,z_1]$. The curve in {\color{BrickRed}red} is contractible in the solid torus bounded by $\partial\!\!\triangle(C_1,C_2,C_3)$ in $3$-sphere~$\partial \HH_\CC^2$.}
\label{fig:section_strange_curve_right}
\end{figure}

Note that the meridional curves are glued by the isometries $R_2,R_3,R_4,R_5$. The curve $[z_1,z_2]$ is glued to $[z_3,z_4]$ via $R_2$. The curve $[z_4,z_5]$ is glued to $[z_6,z_7]$ by $R_3$ and so on. The quadrangle $\mathcal Q$ bounds a fundamental domain $K$ and its fibration by complex geodesics is extended to a disc fibration to all $K$. Additionally, such fiber of~$K$ can be extended to~$\partial \HH_\CC^2$. 

In this way, $\partial_\infty K \coloneq K \cap \partial \HH_\CC^2$, the absolute of~$K$, is fibered by circles and forms a solid $2$-torus. We will show that~$\gamma$ is contractible in~$\partial_\infty K$, thus showing that there is a section over the regular points of the circle orbibundle associated with the disc orbibundle. Another way of phrasing this is the following: the $2$-torus $\partial \mathcal Q$ on the $3$-sphere $\partial\HH_\CC^2$ bounds a solid torus $\partial_\infty K$ fibered by circles arising from the discs fibering $K$. We will show that the curve $\gamma\subset \partial \mathcal Q$ is contractible on the solid torus $\partial_\infty K$.

By our choice of $z_1$, we have that $R^m z_7 = z_1$ and as consequence, the curve in {\color{RoyalBlue} blue} in Figure~\ref{fig:section_strange_curve_right} is closed. We now construct the curve in {\color{BrickRed}red}.

The curves $[z_1,s_2]$, $[s_2,s_3]$, $[s_3,s_4]$ and $[s_4,s_5]$ are obtained through meridional curves.

The polar points for $M_1,M_2,M_3,M_4$ are $m_1,m_2,m_3,m_4$:
\begin{equation*}
\begin{gathered}
m_1=\left[
\begin{smallmatrix}
    0.7868894753646337 \\
    0.617093958453955 \\
    0\\
\end{smallmatrix}
\right],\quad
m_2=\left[
\begin{smallmatrix}
    -0.6838620300713663\\
    -1.07259568309144\\
    -0.634948964320878-0.3594058103794568\, i \\
\end{smallmatrix}
\right],\\
m_3=\left[
\begin{smallmatrix}
    0.6844691229169399-1.088359820112815\, i \\
    -0.4031872153910329-0.82761658877558\, i \\
    0.1055037213393959-0.5230976120695976\, i \\
\end{smallmatrix}
\right],\quad
m_4=\left[
\begin{smallmatrix}
    1.278851231138038+0.963501401905471\, i \\
    1.362030724410985-0.4602994305090045\, i \\
    1.182655368126949-0.02112151207509668 \, i \\
\end{smallmatrix}
\right].
\end{gathered}
\end{equation*}


Note that $s_2 = R^{m_1}z_1$, $s_4=R^{m_2}s_2$ and $s_5=R^{m}s_4$. The point $s_3$ is the intersection of the meridional curve $[s_2,s_3]$ with $\partial M_2$ of $\mathrm B[C_2,C_3]$. 

The isometry $R^{m}R^{m_2}R^{m_1}$ moves $z_1$ in the clockwise direction. Indeed, 
$$\big\langle z_1,R^{m}R^{m_2}R^{m_1}z_1 \big\rangle 
\big\langle R^{m}R^{m_2}R^{m_1}z_1, (R^{m}R^{m_2}R^{m_1})^2z_1 \big\rangle \big\langle (R^{m}R^{m_2}R^{m_1})^2z_1, z_1 \big\rangle \simeq 0.24\, i.$$

Connecting $s_5$ to $z_1$ with a circle segment in the counterclockwise direction, we obtain a contractible curve (see the curve in {\color{BrickRed}red} in Figure~\ref{fig:section_strange_curve_right}). This is true for counterclockwise-oriented transversal triangles of bisectors (see \cite[Theorem 2.24]{AGG2011}). 

\begin{rmk} A brief explanation of why the curve in {\color{BrickRed}red} is contractible in the absolute of the region bounded by the triangle of bisectors: consider an arbitrary coun{\-}ter{\-}clock{\-}wise-oriented and transversal triangle of bisectors with vertices $S_1,S_2,S_3$ and let $w_1,w_2,w_3$ be the polar of the middle slices of $\mathrm B[S_1,S_2]$, $\mathrm B[S_2,S_3]$, $\mathrm B[S_3,S_1]$, respectively. The isometry $I\coloneq R^{w_3}R^{w_2}R^{w_1}|_{S_1}$ is called holonomy at $S_1$ of the triangle $\triangle(S_1,S_2,S_3)$. Because the triangle $\triangle(S_1,S_2,S_3)$ is counterclockwise-oriented and transversal, the holonomy restricted to~$\partial S_1$ always has a non-empty component where its points move in the clockwise direction, meaning that any $s \in \partial S_1$ on this component satisfy
$$\imag \langle s, I s\rangle\langle Is, I^2s\rangle\langle I^2s,s \rangle >0.$$

Given a point $s_1$ on this component, we can construct the meridional curves $[s_1,s_2]$, $[s_2,s_3]$, $[s_1,s_4]$, where $s_{i+1} = R^{w_i}s_i$. The point $s_4$ is a point at $\partial S_1$ and, connecting~$s_4$ to~$s_1$ via a segment of
the circle $\partial S_1$ oriented in the 
counterclockwise direction, we obtain a closed curve. Additionally, the space of coun{\-}ter{\-}clock{\-}wise-oriented transversal triangles of bisectors is path-connected. Consequently, we can deform the triangle $\triangle(S_1,S_2,S_3)$ into a $\CC$-plane configuration, where the three vertices $S_1,S_2,S_3$ of the triangle of bisectors are perpendicular to a common complex geodesic. In this configuration, $R^{w_3}R^{w_2}R^{w_1}|_{\partial S_1}$ becomes a rotation in the clockwise direction with explicit angle, where it is easy to prove that the described curve is contractible because we can make the holonomy arbitrarily close to the identity by moving the vertices $S_1,S_2,S_3$ on this $\CC$-plane configuration (see \cite[Section~2.5.1]{AGG2011}).
\end{rmk}

Back to the Figure \ref{fig:section_strange_curve_right}, the point $s_3$ is over the circle segment connecting $z_5$ to $z_6$ in the counterclockwise direction because $$\langle z_5, s_3 \rangle \langle s_3, z_6 \rangle\langle z_6, z_5 \rangle \simeq -0.25\,i.$$  Thus, to go from $s_3$ to $z_6$ we must move in the counterclockwise direction. In the same way, to move from $s_5$ to $z_1$ we must move in the counterclockwise direction, thus the curves in {\color{RoyalBlue}blue} and {\color{BrickRed}red} in the Figure~\ref{fig:section_strange_curve_right} are homotopic in $\partial_\infty\!\! \triangle(C_1,C_2,C_3)$. Therefore, the curve in {\color{BrickRed}red} is contractible in the solid torus bounded by~$\partial_\infty \mathcal Q$.

\begin{figure}[H]
\centering
\includegraphics[scale=.9]{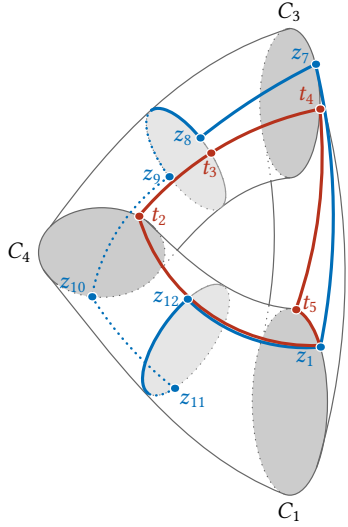}
\caption{The {\color{RoyalBlue}blue} curve is the part of $\gamma$ on the left triangle joined with the meridional curve $[z_7,z_1]$. 
The {\color{BrickRed}red} curve is contractible in the solid torus bounded by $ \partial\!\!\triangle(C_1,C_3,C_4)$ in $\partial \HH_\CC^2$.}
\label{fig:section_strange_curve_left}
\end{figure}

Now we do the same procedure in the triangle $\triangle(C_1,C_3,C_4)$. Following the Figure~\ref{fig:section_strange_curve_left}, we construct the meridional curve $[z_1,z_7]$ and joining it with the part of the curve~$\gamma$ on the triangle $\triangle(C_1,C_3,C_4)$ we obtain the curve in {\color{RoyalBlue}blue}. By constructing the meridional curves $[z_1,t_2]$, $[t_2,t_4]$, $[t_4,t_5]$ and then connecting~$t_5$ to~$z_1$ with a circle segment in the counterclockwise direction we obtain the curve in {\color{BrickRed}red}. Following the order of points $t_5,t_4,t_2,z_1$, produces a closed curve, because $R^{m_4}R^{m_3}R^{m}$ maps $t_5$ to $z_1$ and the dynamics of $t_5$ under action of $R^{m_4}R^{m_3}R^{m}$ moves the point in the clockwise direction, because
$$\big\langle t_5, R^{m_4}R^{m_3}R^{m}t_5 \big\rangle \big\langle R^{m_4}R^{m_3}R^{m}t_5, (R^{m_4}R^{m_3}R^{m})^2t_5 \big\rangle \big\langle (R^{m_4}R^{m_3}R^{m})^2 t_5, t_5 \big\rangle \simeq  0.24\, i$$
has positive imaginary part. Therefore, the curve in red, following Figure \ref{fig:section_strange_curve_left}, is contractible in the solid torus bounded by $\partial\!\! \triangle(C_1,C_3,C_4)$ in the $3$-sphere $\partial \HH_\CC^2$.

The point $t_3$ appears as the intersection of the meridional curve $[t_2,t_4]$ with the middle slice $M_3$. Since
$$\langle t_3, z_8 \rangle  \langle z_8,z_9 \rangle\langle z_9, t_3\rangle \simeq 0.25\, i,$$
we have that $t_3$ is not over the circle segment going from $z_8$ to $z_9$ in the clockwise direction, because $t_3,z_8,z_9$ are not in cyclic order. Note that since to go from $t_3$ to $z_8$ we must move in the clockwise direction and, in the same way, to move from $t_5$ to $z_1$, we must move in the clockwise direction over the {\color{BrickRed}red} circle segment, the curves in {\color{RoyalBlue}blue} and {\color{BrickRed}red} are homotopic.

Therefore, we conclude that $\gamma$ is contractible in the solid torus $\partial_\infty K$.

By shrinking this curve inside the solid torus $\partial_\infty K$, we obtain a section $\sigma$ defined over the regular points for the circle bundle  $\mathbb{S}^1(L) \to \Sigma_{5,2}$. Recall that the $2$-orbifold $\Sigma_{5,2}$ is a sphere with $5$ conic points, $x_1,x_2,x_3,x_4,x_5$. These points arise from the discs $C_1,M_1,M_2,M_3,M_4$. The fibers over $x_1,x_2,x_3,x_4,x_5$ are $\partial C_1$, $\partial M_1/R_2$, $\partial M_2/R_3$, $\partial M_3/R_4$, $\partial M_4/R_5$. Remove small open discs $D_i$'s centered at the points $x_i$'s from the sphere $\Sigma_{5,2}$. The remaining surface $\Sigma$ is a surface with boundary. We use the section $\sigma|_{\partial \Sigma}$ to computes the Euler number. Let $s$ be the fiber of the circle orbibundle over a regular point, oriented in the counterclockwise orientation. 
In the homology group $H_1(\mathbb S^1(L),\mathbb Q)$ we have the identity
$$\sigma|_{\partial \Sigma} = -\partial M_1/R_2 + \partial M_2/R_3 - \partial M_3/R_4+\partial M_4/R_5 = -\frac12 s +\frac12 s-\frac12 s+\frac12 s=0.$$

Indeed, following the {\color{RoyalBlue}blue} curve in Figure~\ref{fig:section_strange_curve}, the circle segment $z_2z_3$, which follows the clockwise direction, becomes $ -\partial M_1/R_2$ in homology when viewed in the circle orbibundle. Similarly, the circle segment $z_5z_6$ follows the counterclockwise direction, thus in the circle orbibundle this curve becomes $\partial M_2/R_3$. In the same way, the circle segments $z_8z_9$ and $z_{10}z_{11}$ become $- \partial M_3/R_4$ and $\partial M_4/R_5$. Additionally, observe that $\partial M_i/R_{i+1} = \frac12s$ for $i=1,2,3,4$.

Thus, the Euler number $e=0$.

\subsection{Computational tests on bending the relation}
\label{subsec:comp-bend}

By Corollary~\ref{cor:dim-discrete}, there is a $4$-dimensional open neighborhood of
the representation constructed in Section~\ref{sec:computational} composed by 
discrete and faithful representations.
And, as stated in Theorem~\ref{thm:bend-quad}, this neighborhood is given by 
small bending-deformations of the representation that preserves discreteness and faithfulness.

To illustrate this fact, here we investigate what happens with the discreteness
and faithfulness of the representation constructed explicitly 
in Subsection~\ref{subsec:the-relation} under the action of bendings of 
$R^{p_2}R^{p_1}$ and $R^{p_3}R^{p_2}$.

\begin{rmk}
Let $p_1,p_2\in\PV$ be nonisotropic points, not both positive. Then $R^{p_2}R^{p_1}$
is loxodromic and it fixes two isotropic points $v_1,v_2\in\SV$ and a positive 
point~$c\in\EV$. A bending $B(\theta)$, for $\theta\in\mathbb R$, of $R^{p_2}R^{p_1}$ 
is written as
\begin{equation}
\label{eq:bend-matrix}
B(\theta)=\left[
\begin{matrix}
1 & 0 & 0 \\
0 & e^\theta & 0 \\ 
0 & 0 & e^{-\theta}
\end{matrix}
\right].
\end{equation}
in the basis $c,v_1,v_2$.
\end{rmk}

To do this, we look at the strongly regular triple $(p_1,p_2,p_3)$ and the corresponding point
$(s_1,s_2,s)$ in the surface~$\mathcal S_{\pmb\sigma,\tau}$ of Subsection~\ref{subsec:the-relation}.
We have that, as described in Subsection~\ref{subsec:relative}, 
bendings of $R^{p_2}R^{p_1}$ move the point $(s_1,s_2,s)$ over~$\mathcal S_{\pmb\sigma,\tau}$
preserving~$s_1$, while bendings of $R^{p_3}R^{p_2}$ move $(s_1,s_2,s)$ preserving~$s_2$.

Using the previous remark, for values of $\theta$ starting with $\theta=0$, and using
increments of $\Delta\theta=0.02$ (for ``positive'' bendings) or decrements of $\Delta\theta=-0.02$
(for ``negative'' bendings), we apply the bending~$B(\theta)$ of $R^{p_2}R^{p_1}$ as 
in~\eqref{eq:bend-matrix} (in the appropriate basis) to our 
representation and, in each step of these small bending deformations, we check the quadrangle conditions of 
Subsection~\ref{subsec:quadrangle}, marking the obtained
point in the surface in {\color{RoyalBlue} blue} if the conditions are satisfied (in which
case we have a discrete and faithful representation) or in {\color{Red} red} if the they are
not (in which case we can say nothing about the discreteness/faithfulness of the representation).
This gives us Figure~\ref{fig:surf-bend-1}.
\begin{figure}[H]
\centering
\includegraphics[scale=.65, trim=.5cm .5cm 2cm 2.5cm, clip]{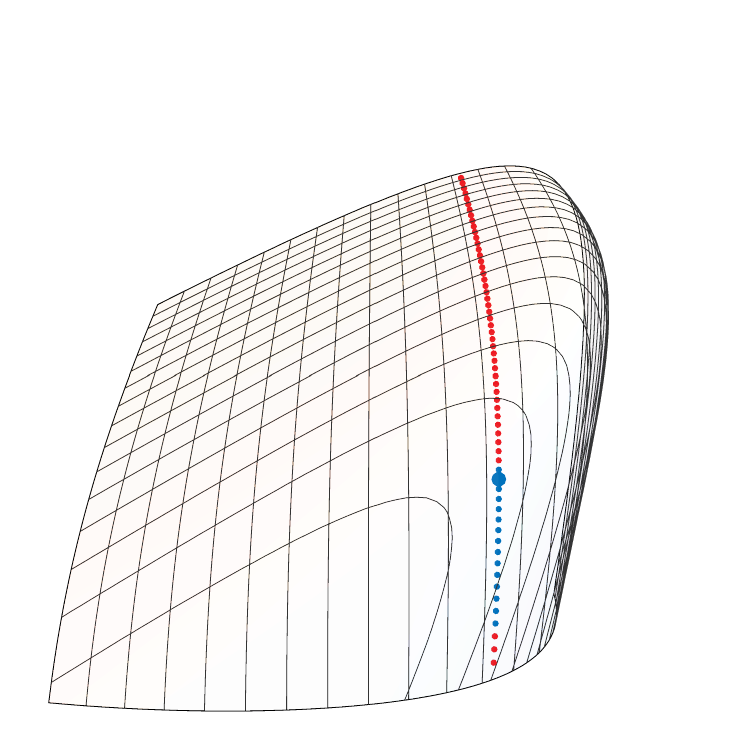}\hspace{1cm}
\includegraphics[scale=.57]{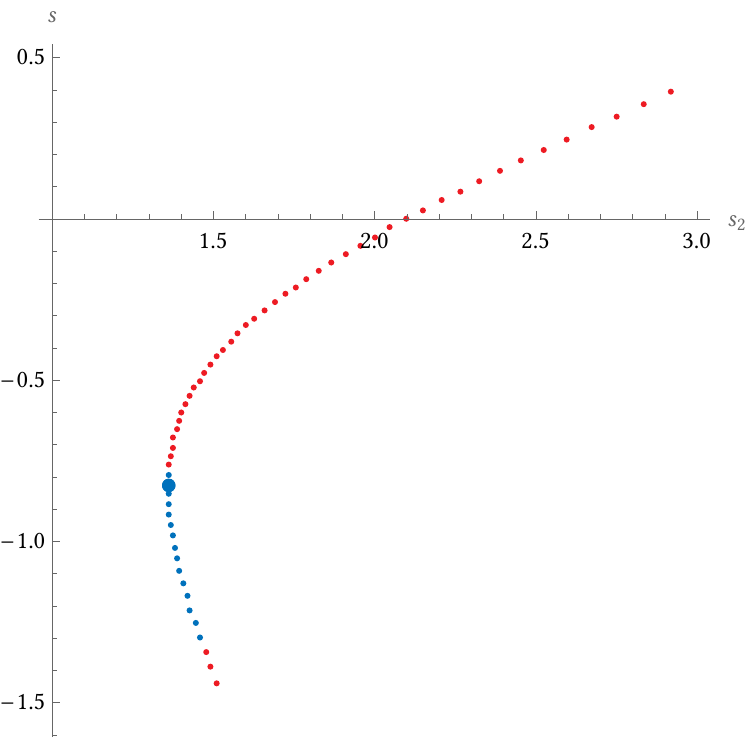}
\caption{Point $(s_1,s_2,s)$ in $\mathcal S_{\pmb{\sigma},\tau}$ and the interval 
corresponding to bendings of $R^{p_2}R^{p_1}$ producing discrete representations}
\label{fig:surf-bend-1}
\end{figure} 

Thereafter, we do the same thing, but now considering bendings of $R^{p_3}R^{p_2}$, 
and obtain Figure~\ref{fig:surf-bend-2}.

\begin{figure}[H]
\centering
\includegraphics[scale=.65, trim=.5cm .5cm 2cm 2.5cm, clip]{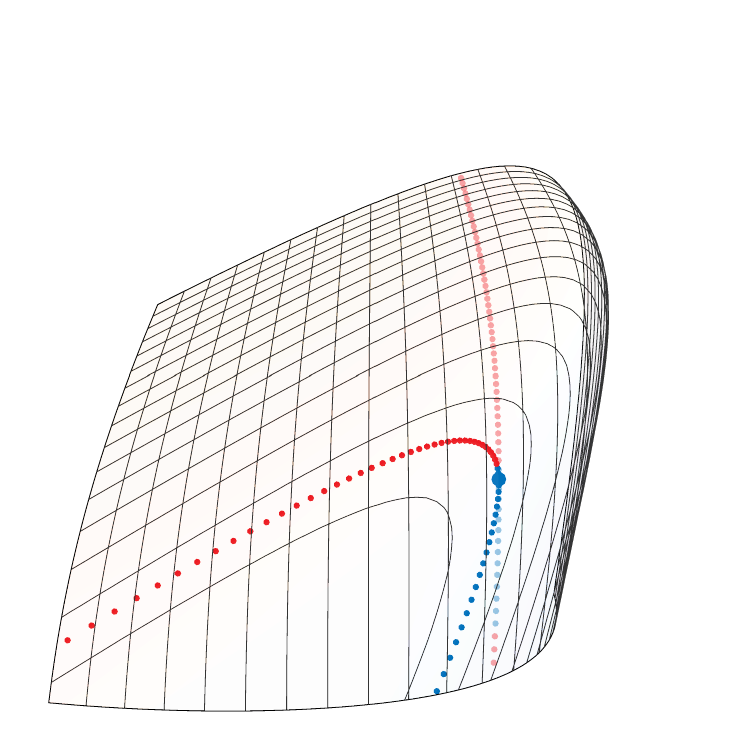}\hspace{1cm}
\includegraphics[scale=.5]{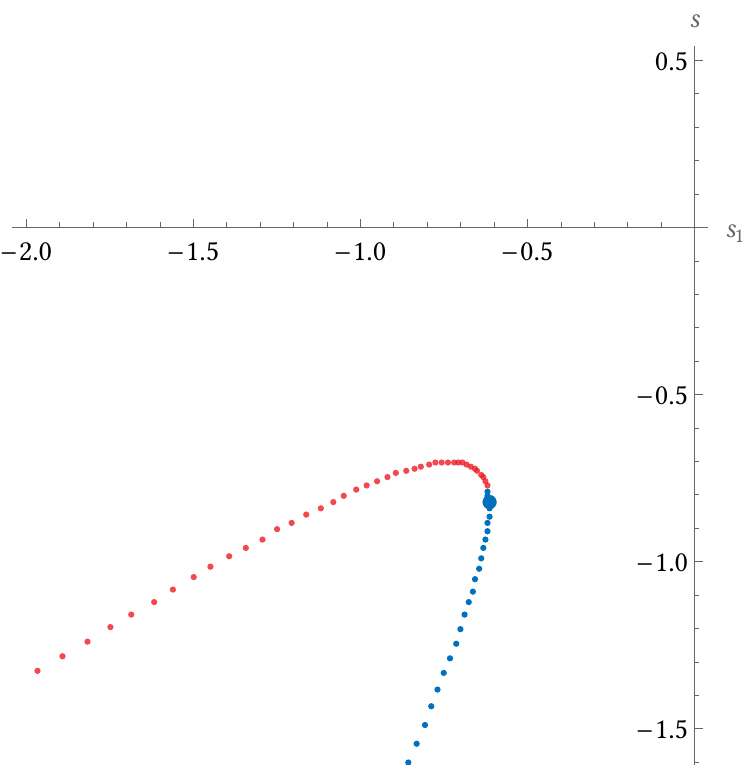}
\caption{Point $(s_1,s_2,s)$ in $\mathcal S_{\pmb{\sigma},\tau}$ and the interval 
corresponding to bendings of $R^{p_3}R^{p_2}$ producing discrete representations}
\label{fig:surf-bend-2}
\end{figure} 

\smallskip

Note that the vertical curve on the left of Figure~\ref{fig:surf-bend-1} and the horizontal curve 
on the left of Figure~\ref{fig:surf-bend-2} are almost tangent. In Figure~\ref{fig:curve-c},
we plot the curve $C$ in $\mathcal S_{\pmb\sigma,\tau}$ given by the points where
the vertical and horizontal lines are tangent, together with the point
$(s_1,s_2,s)$ in Subsection~\ref{subsec:the-relation}, that where
obtained via the construction in~\cite{SashaGusevskii2007}.

\begin{figure}[H]
\centering
\includegraphics[scale=.65, trim=.5cm .5cm 2cm 2.5cm, clip]{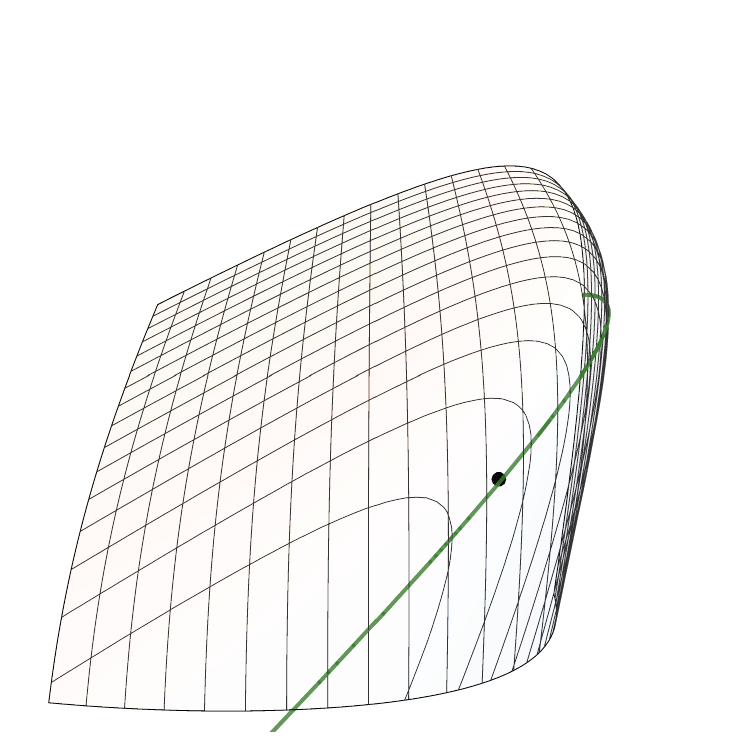}
\caption{The curve $C$ in {\color{OliveGreen} green} and the point $(s_1,s_2,s)$ given 
in Subsection~\ref{subsec:the-relation}}
\label{fig:curve-c}
\end{figure}

\smallskip

Lastly, consider the points $p_1,p_2,p_3,p_4,p_5$ constructed in
Subsection~\ref{subsec:the-relation} which determines a representation
$\varrho\in\mathcal P_{\Sigma,\delta}$, where $\Sigma\coloneq (+,-,-,-,-)$ and $\delta\coloneq \omega^2$.
As briefly mentioned in Subsection~\ref{subsec:relative}, 
it is possible that after the composition of two consecutive bendings of the 
strongly regular triple~$(p_1,p_2,p_3)$ we end up with a distinct triple of points
$(p_1',p_2',p_3')$ that is geometrically equal to $(p_1,p_2,p_3)$. In summary,
$(p_1,p_2,p_3)$ and its bending deformation $(p_1',p_2',p_3')$ are geometrically equal and so
they correspond to point in the surface $\mathcal S_{\pmb\sigma,\tau}$, where
$\pmb\sigma=(+,-,-)$ and $\tau\coloneq -2.22 - 3.845152793\, i$, but are distinct triple of points.
This implies that, the lists of points $p_1',p_2',p_3',p_4,p_5$ and $p_1,p_2,p_3,p_4,p_5$ are geometrically distinct, and associated with 
the former we have a representation
$\varrho'\in\mathcal P_{\Sigma,\delta}$ which is a bending deformation of
$\varrho$ and is distinct from $\varrho$ modulo~$\PU(2,1)$.
In Figure~\ref{fig:hol}, the big {\color{RoyalBlue} blue} dot corresponds to the
starting strongly regular triple $(p_1,p_2,p_3)$ and the points in the 
vertical line through it corresponds to the bendings of $R^{p_2}R^{p_1}$
obtained in~Figure~\ref{fig:surf-bend-1}. Over
this vertical line, we take a big {\color{Red} red} dot 
(so we are at a representation that fails the quadrangle conditions) and
mark points over the horizontal lines through this point, following
the process of increments/decrements of $\Delta\theta=\pm 0.02$ as above.
Finally, we notice that we obtain points that are simultaneously {\color{RoyalBlue} blue}
and {\color{Red} red}.

\begin{figure}[H]
\centering
\includegraphics[scale=.65, trim=.5cm .5cm 2cm 2.5cm, clip]{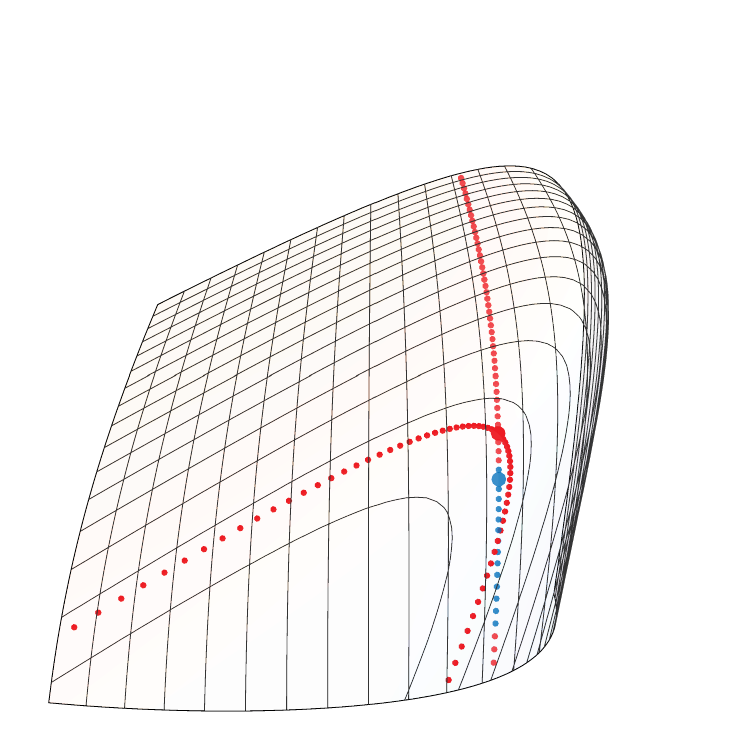}
\caption{A closed path in $\mathcal S_{\pmb\sigma,\tau}$ obtained via bendings where the tessellation conditions break along the way.}
\label{fig:hol}
\end{figure}

\bibliographystyle{alpha}
\bibliography{references-quad-refl}

\bigskip

\noindent
{\sc Hugo C. Bot\'os}

\noindent
{\sc Instituto de Matem\'atica e Estat\'istica, IME, Universidade de S\~ao Paulo, S\~ao Paulo/SP, Brasil}

\noindent
\url{hugocbotos@usp.br}

\bigskip

\noindent
{\sc Felipe A.~Franco}

\noindent
{\sc Centro de Matem\'atica, Computa\c c\~ao e Cogni\c c\~ao, CMCC, Universidade Federal do ABC, Santo Andr\'e/SP, Brasil}

\noindent
\url{f.franco@ufabc.edu.br}

\end{document}